\newcommand{\blue}{\color{blue}}
\newtheorem{theorem}{Theorem}[section]
\newtheorem{corollary}[theorem]{Corollary}
\newtheorem{lemma}[theorem]{Lemma}
\theoremstyle{definition}
\newtheorem{remark}[theorem]{Remark}
\numberwithin{equation}{section}
\newcommand{\diam}{\text{\textnormal{diam}}}
\begin{document}
\title[Ising Magnetization Review]{Conformal Measure Ensembles and \\ Planar Ising Magnetization: A Review}

\author{Federico Camia}
\address{Division of Science, NYU Abu Dhabi, Saadiyat Island, Abu Dhabi, UAE \& Department of Mathematics, Faculty of Science, Vrije Universtiteit Amsterdam, De Boelelaan 1111, 1081 HV Amsterdam, The Netherlands.}
\email{federico.camia@nyu.edu}
\author{Jianping Jiang}
\address{Beijing Institute of Mathematical Sciences and Applications, No.11 Yanqi Lake West Road, Beijing 101407 \&  NYU-ECNU Institute of Mathematical Sciences at NYU Shanghai, 3663 Zhongshan
Road North, Shanghai 200062, China.}
\email{jianpingjiang11@gmail.com}
\author{Charles M. Newman}
\address{Courant Institute of Mathematical Sciences, New York University,
251 Mercer st, New York, NY 10012, USA, \& NYU-ECNU Institute of Mathematical
Sciences at NYU Shanghai, 3663 Zhongshan Road North, Shanghai 200062, China.}
\email{newman@cims.nyu.edu}

\subjclass[2010]{Primary: 60K35, 82B20; Secondary: 82B27, 81T27, 81T40}

\begin{abstract}
We provide a review of results on the critical and near-critical scaling limit of the
planar Ising magnetization field obtained in the past dozen years. The results are presented in the framework of coupled loop and measure ensembles, and some new proofs are provided.
\end{abstract}


\maketitle

\section{Synopsis}

In \cite{cn09} the first and third authors introduced the concept of Conformal Measure Ensemble (CME) as the scaling limit of the collection of appropriately rescaled counting measures of critical FK-Ising clusters. They proposed to use a representation of the Ising magnetization field in terms of such a CME to study its existence, uniqueness and conformal properties in the critical scaling limit. Initial results and work in progress with Christophe Garban were presented by the first author at the Inhomogeneous Random Systems 2010 conference (Institut Henri Poincar\'e, Paris) and described in \cite{C12}. CMEs for Bernoulli and FK-Ising percolation were first constructed in \cite{CCK19}. The results on the two-dimensional Ising model discussed or conjectured in \cite{C12} have now been fully proved and have appeared in various papers by a combination of different authors \cite{cgn15,cgn16,CCK19,cjn20,cjn_coupling}. Those results, and more, were recently presented in a talk at the Inhomogeneous Random Systems 2020 conference (Institut Curie, Paris), which was the inspiration for the present paper, whose main goal is to review the results of \cite{cgn15,cgn16,cjn20,cjn_coupling} and present them in the unifying CME framework. While the results presented in this paper are not new, in some cases their formulation is somewhat different than what has previously appeared in the literature, and whenever we provide a detailed proof of a result, the proof is new.

\section{Introduction and historical remarks}

The Ising model was introduced by Lenz in 1920 \cite{Lenz20} to describe ferromagnetism, and is nowadays one of the most studied models of statistical mechanics.
The one-dimensional version of the model was studied by Ising in his Ph.D. thesis \cite{Ising24} and in his subsequent paper \cite{Ising25}, but it was not until
Peierls' and Onsager's famous investigations of the two-dimensional version that the model gained popularity. In 1936 Peierls \cite{Peierls36} proved that the
two-dimensional model undergoes a phase transition{\blue ;} then in 1941 Kramers and Wannier~\cite{KW41} located the critical temperature of the model defined on the
square lattice, and in 1944 Onsager \cite{Onsager44} derived its free energy. Since then, the two-dimensional Ising model has played a special role in the theory of
critical phenomena. Its phase transition has been extensively studied by both physicists and mathematicians, becoming a prototypical example and a test case for
developing ideas and techniques and for checking hypotheses.

Ferromagnetism is one of the most interesting phenomena in solid state physics; it refers to the tendency, observed in some metals such as iron and nickel, of the atomic
spins to become spontaneuosly polarized in the same direction, generating a macroscopic magnetic field. Such a tendency, however, is only present when the temperature
is lower than a metal-dependent characteristic temperature, called \emph{Curie temperature}. Above the Curie temperature the spins are oriented at random, producing
no net magnetic field. Moreover, as the Curie temperature is approached from either side, the specific heat of the metal appears to diverge.

The Ising model is a crude attempt to reproduce the behavior described above. Its one-dimensional version fails to do so, as already realized by Ising \cite{Ising24,Ising25},
but the two-dimensional version does exhibit a phase transition, as shown by Peierls \cite{Peierls36} and subsequently investigated by Kramers and Wannier \cite{KW41},
Onsager \cite{Onsager44} and many others.
In the most common version of the two-dimensional Ising model one associates a $\pm 1$ \emph{spin} variable to each vertex of a square grid
and then assigns to each spin configuration a probability derived from a Gibbs distribution that favors the alignment of neighboring spins.
The appeal of the two-dimensional version of the model stems from its simplicity and the fact that it yields to an exact treatment, which reveals a rich mathematical
structure. Its analysis has provided important tests for various fundamental aspects of the theory of critical phenomena such as the \emph{scaling hypothesis}, the emergence of scale
and conformal invariance at the \emph{critical point} marking a phase transition, and Landau's mean-field theory. All these different aspects of the theory of critical phenomena
find a natural interpretation in the \emph{renormalization group} philosophy, which asserts that the critical properties of a system do not depend on short-distance details
but only on the nature of long-distance fluctuations, suggesting a coarse-graining procedure that removes the short-distance features until one reaches the
\emph{correlation length} of the system, i.e., the characteristic length at which fluctuations become important and beyond which different parts of the system become uncorrelated.

The critical point is characterized by a diverging correlation length, as implied by Wu's celebrated result \cite{Wu66} showing that the correlation length of the two-dimensional Ising model diverges as the critical point is approached and the two-point
function between positions $x$ and $y$ decays like $|x-y|^{-1/4}$ at criticality.
This power-law behavior should be contrasted with the exponential decay that, away from the critical point, implies the existence of a finite correlation length
(see Figure \ref{phase_transition}).
\begin{figure}
		\begin{center}
\includegraphics[width=4cm]{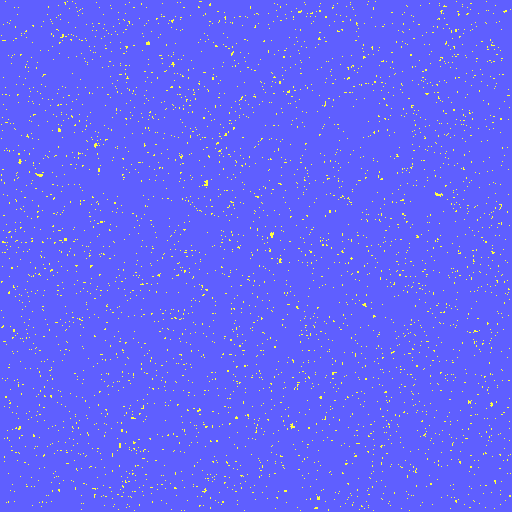}
\includegraphics[width=4cm]{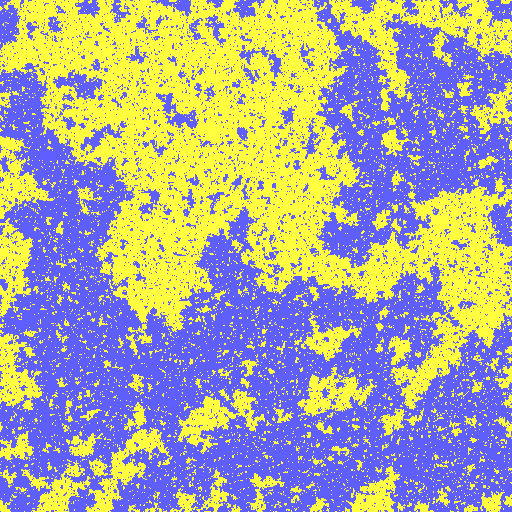}
\includegraphics[width=4cm]{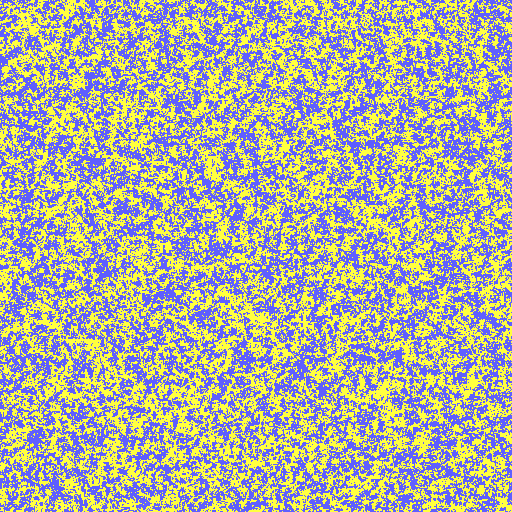}
		\end{center}
	\caption{The Ising model phase transition (courtesy of Wouter Kager). Blue (darker) and yellow (lighter) regions correspond to $+1$ and $-1$ spin values. The three images represent a typical low temperature, critical, high temperature configuration, respectively (from left to right).}
	\label{phase_transition}
\end{figure}

The renormalization group coarse-graining procedure can take the form of a \emph{continuum scaling limit} in which the mesh size of the grid on which the model is defined is sent to zero.
At the critical point, where the correlation length diverges, Smirnov \cite{Smirnov10} proved that certain observables of the two-dimensional Ising model have a well-defined scaling limit which is conformally invariant. This groundbreaking and much celebrated result confirms the emergence of conformal invariance and provides a link with conformal field theory.

The work of Chelkak, Hongler and Izyurov \cite{chi15} and of Camia, Garban and Newman \cite{cgn15} can be seen as the culmination of this line of research:
\cite{chi15} proves the existence and conformal invariance of the scaling limit of the $n$-point Ising correlation functions and \cite{cgn15} shows that in the scaling limit the Ising magnetization converges to a conformally invariant Euclidean field.

Conformal invariance is one of the most interesting features to emerge from the analysis of the scaling limit of critical models. Its emergence was predicted by Polyakov \cite{polyakov} and is discussed in \cite{BPZ84a,BPZ84b}. In two dimensions, conformal methods were applied extensively to Ising and Potts models, Brownian motion, the self-avoiding walk, percolation, and diffusion limited aggregation.
	
Moving away from criticality, one can modify the energy and hence the Gibbs distribution of the Ising model with a linear function of the spin variables. This models the presence of an external magnetic field that influences the alignment of the atomic spins. The model with an external field has never been solved exactly in dimension two or higher. However, in two dimensions
Zamolodchikov \cite{Zam89} proposed a solution of the model directly in the scaling limit in the shape of a field theory containing eight massive particles whose masses are related
to the exceptional Lie algebra $\text{E}_8$ (see \cite{BG11} for a ``journalistic'' account of this relation). In \cite{cgn16} Camia, Garban and Newman showed that a meaningful
scaling limit of this variant of the model can be obtained by scaling appropriately the external field with the lattice spacing in such a way that the correlation length should remain bounded
away from zero and infinity. In \cite{cjn20} the current authors showed that the resulting field theory has a mass gap, providing a first step in the direction of Zamolodchikov's theory.

The main goal of this paper is to review the results of \cite{cgn15,cgn16,cjn20,cjn_coupling} and present them in the unifying Conformal Measure Ensemble (CME) framework.
The CME idea and its usefulness in the study of scaling limits were first proposed in \cite{cn09}. CMEs for Bernoulli and FK-Ising percolation were first constructed in \cite{CCK19}.
The CME associated with FK-Ising percolation plays a crucial role in the proof of exponential decay of \cite{cjn20}.

We point out that significant progress continues to be made in other directions than those mentioned above. Even the recent literature on the Ising model is too vast to be surveyed in this paper, but we refer the interested reader to \cite{MM12} and references therein for a taste of recent results of a different flavor.

\section{The two-dimensional Ising model and its FK representation} \label{sec:Ising&FK}

We consider the standard Ising model on the square lattice $a{\mathbb Z}^2$ with (formal) \emph{Hamiltonian}
\begin{equation} \label{eq:hamiltonian}
{\bf H} = -\sum_{\{x,y\}} \sigma_x \sigma_y - H \sum_x \sigma_x \, ,
\end{equation}
where the first sum is over nearest-neighbor pairs in $a{\mathbb Z}^2$, the spin variables $\sigma_x, \sigma_y$ are $(\pm 1)$-valued and the external
field $H$ is in $\mathbb R$. For a bounded $\Lambda \subset {\mathbb Z}^2$, the \emph{Gibbs distribution} is given by
$\frac{1}{Z_{\Lambda}} \, e^{-\beta \, {\bf H}_{\Lambda}}$, where ${\bf H}_{\Lambda}$ is the Hamiltonian~(\ref{eq:hamiltonian}) with
sums restricted to vertices in $\Lambda$, $\beta \geq 0$ is the \emph{inverse temperature}, and the \emph{partition function} $Z_{\Lambda}$ is
the appropriate normalization needed to obtain a probability distribution. 

The critical inverse temperature is $\beta_c =\frac{1}{2} \, \log{(1+\sqrt{2})}$, and for all $\beta\leq\beta_c$
the model has a unique \emph{infinite-volume Gibbs distribution} for any value of the external field $H$, obtained as a weak limit of the
Gibbs distribution for bounded $\Lambda$ by letting $\Lambda \uparrow {\mathbb Z}^2$. For any value of $\beta\leq\beta_c$ and of
$H$, expectation with respect to the unique infinite-volume Gibbs distribution will be denoted by $\langle \cdot \rangle_{\beta,H}$.
At the \emph{critical point}, that is when $\beta=\beta_c$ and $H=0$, expectation will be denoted by $\langle \cdot \rangle_c$. 
By translation invariance, the \emph{two-point correlation} $\langle \sigma_x \sigma_y \rangle_{\beta,H}$ is a function only of $y-x$.
In particular, Wu \cite{Wu66} proved that $\langle\sigma_x\sigma_y\rangle_c$ decays like $|x-y|^{-1/4}$.

We want to study the random field $\Phi^{a,H}$ associated with the spins on the rescaled lattice $a {\mathbb Z}^2$ in the scaling limit $a \to 0$:
\begin{equation}\label{eqPhi}
\Phi^{a,H}:=\Theta_a\sum_{x\in a\mathbb{Z}^2}\sigma_x\delta_x,
\end{equation}
where $\delta_x$ is a unit Dirac point measure at $x$  and $\Theta_a$ is an appropriate scale factor. More precisely, for functions $f$ of bounded support on ${\mathbb R}^2$, we define
\begin{eqnarray} \label{eq:lat-field1}
\Phi^{a,H}(f) := \int_{{\mathbb R}^2} f(z) \Phi^{a,H}(z) dz & := & \int_{{\mathbb R}^2} f(z) \big[ \Theta_a \sum_{x \in a{\mathbb Z}^2} \sigma_x \delta(z-x) \big] dz \\
& = & \Theta_a \sum_{x \in a{\mathbb Z}^2} f(x) \sigma_{x},
\end{eqnarray}
with scale factor $\Theta_a$ proportional to
\begin{equation} \label{eq:Theta-first}
\Big( \sum_{x,y \in [0,1]^2 \cap a{\mathbb Z}^2} \langle \sigma_{x} \sigma_{y} \rangle_c \Big)^{-1/2}.
\end{equation}

The \emph{rescaled block magnetization} in a bounded domain $D$ is $M^a_D := \Phi^{a,H}({\mathbf I}_{D})$, where $\mathbf I$ denotes the indicator function.
It is a rescaled sum of identically distributed \emph{dependent} random variables. In the high-temperature regime, $\beta < \beta_c$, and with zero external
field, $H=0$, the dependence is sufficiently weak for the rescaled block magnetization to converge, as $a \to 0$, to a mean-zero Gaussian random variable (see, e.g.,~\cite{newman80} and references therein).
In that case, the appropriate scaling factor $\Theta_a$ is of order $a$, and the field converges to Gaussian white noise as $a \to 0$ (see, e.g.,~\cite{newman80}).
In the critical case, however, correlations are much stronger and extend to all length scales, so that one does not expect a Gaussian limit.

\subsection{Zero external field}

The FK representation of the Ising model with zero external field, $H=0$, is based on the $q=2$ random cluster measure $P^{FK}_p$
(see~\cite{grimmett-rcm-book} for more on the random cluster model and its connection to the Ising model). A spin configuration
distributed according to the unique infinite-volume Gibbs distribution with $H=0$ and inverse temperature $\beta\leq\beta_c$
can be obtained in the following way. Take a random-cluster (FK) bond configuration on the square lattice distributed according to
$P^{FK}_p$ with $p=p(\beta)= 1 - e^{-2\beta}$, and let $\{ {\mathcal C}^a_i \}$ denote the corresponding collection of FK clusters, where a cluster
is a maximal set of vertices of the square lattice connected via bonds of the FK bond configuration (see Figure~\ref{fig:loops}).
One may regard the index $i$ as taking values in the natural numbers, but it's better to think of it as a dummy countable index without
any prescribed ordering, like one has for a Poisson point process. Let $\{ \eta_i \}$ be ($\pm 1$)-valued, i.i.d., symmetric random  variables, and
assign $\sigma_x=\eta_i$ for all $x \in {\mathcal C}^a_i$; then the collection $\{ \sigma_x \}_{x \in a{\mathbb Z}^2}$ of spin variables is distributed
according to the unique infinite-volume Gibbs distribution with $H=0$ and inverse temperature $\beta$. When $\beta=\beta_c$, we will use
the notation $P^{FK}_c \equiv P^{FK}_{p(\beta_c)}$, and $E^{FK}_c$ for expectation with respect to $P^{FK}_c$.

\begin{figure}[!ht]
\begin{center}
\includegraphics[width=9cm]{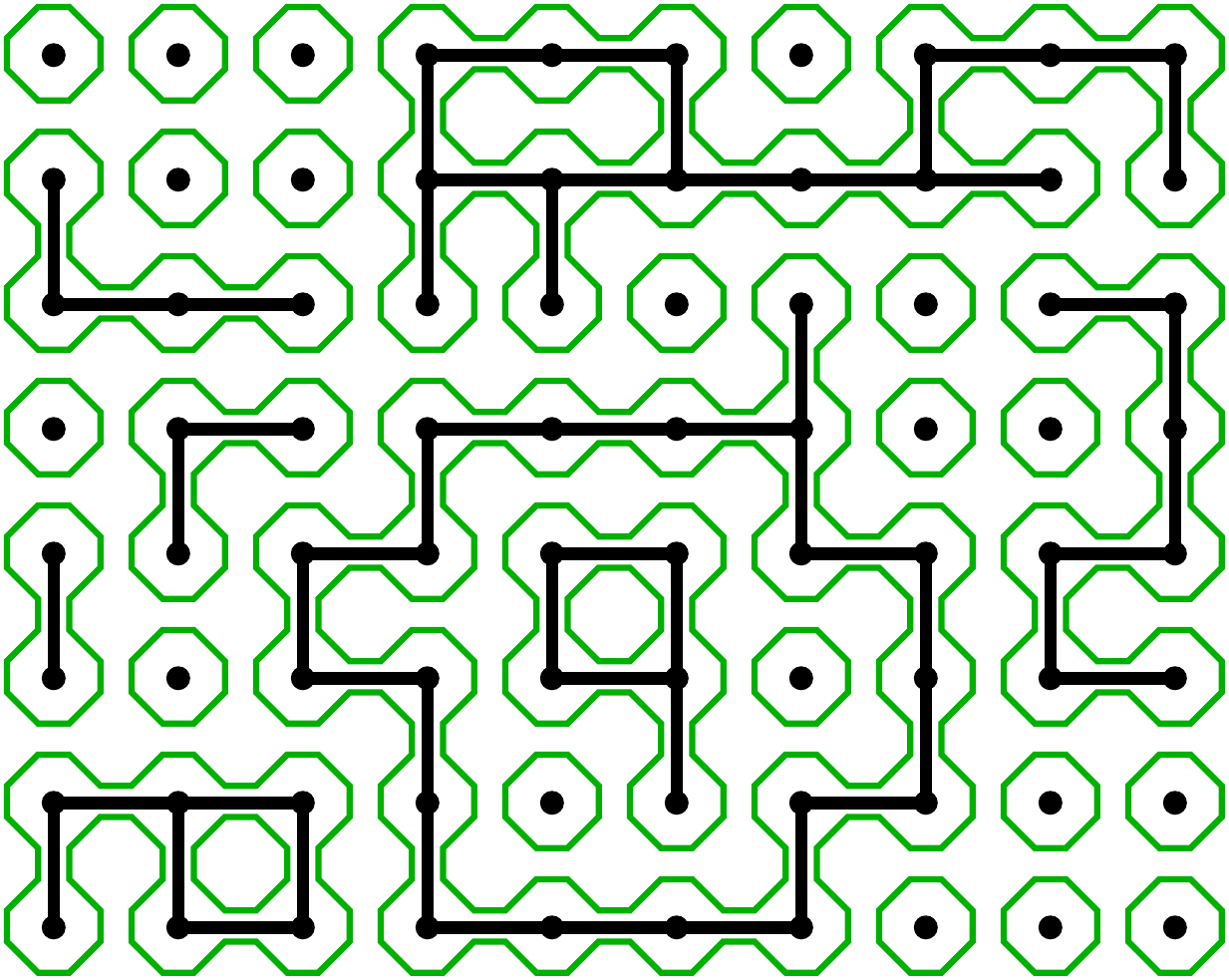}
\caption{Example of an FK bond configuration in a rectangular region (courtesy of Wouter Kager).
Black dots represent sites of ${\mathbb Z}^2$, black horizontal and vertical
edges represent FK bonds. The FK clusters are highlighted by lighter (green)
loops on the medial lattice.}
\label{fig:loops}
\end{center}
\end{figure}

A useful property of the FK representation is that, when $H=0$, the Ising two-point function can be written as
\begin{equation} \nonumber
\langle \sigma_x \sigma_y \rangle_{\beta,0} = P^{FK}_{p(\beta)}(x \text{ and } y
\text{ belong to the same FK cluster } {\mathcal C}^a_i) \, .
\end{equation}
As an immediate consequence, we have that $\Theta_a^{-2}$ is proportional to
\begin{equation} \label{eq:Theta}
\sum_{x,y \in [0,1]^2 \cap a{\mathbb Z}^2} \langle\sigma_x\sigma_y\rangle_c = \sum_{x,y \in [0,1]^2 \cap a{\mathbb Z}^2} E^{FK}_c \left[ \sum_i {\mathbf 1}_{x \in {\mathcal C}^a_i} {\mathbf 1}_{y \in {\mathcal C}^a_i} \right] = E^{FK}_c \left[ \sum_i | \hat{\mathcal C}^a_i |^2 \right],
\end{equation}
where $\hat{\mathcal C}^a_i$ is the restriction of ${\mathcal C}^a_i$ to $[0,1]^2$ and $|\hat{\mathcal C}^a_i|$ is the number of vertices of $a{\mathbb Z}^2$ in $\hat{\mathcal C}^a_i$.
(Note that $\hat{\mathcal C}^a_i$ need not be connected.) Using the FK representation and (\ref{eq:lat-field1}), we can write
\begin{equation} \label{eq:lattice-field}
\Phi^{a,0}(f) \stackrel{dist.}{=} \sum_i \eta_i \mu^a_i(f) \, ,
\end{equation}
where $\mu^a_i := \Theta_a \sum_{x \in {\mathcal C}^a_i}\delta(z-x)$ and the $\eta_i$'s are, as before, $(\pm 1)$-valued symmetric
random variables independent of each other and everything else. We can now easily see that $\Theta_a$ was chosen so that the second
moment of the rescaled block magnetization $M^a_{[0,1]^2}$,
\begin{equation} \label{eq:second-moment}
\left\langle \left[ \Phi^a({\mathbf I}_{[0,1]^2}) \right]^2 \right\rangle_c
= E^{FK}_c\Big[\sum_i \left(\mu^a_i({\mathbf I}_{[0,1]^2})\right)^2\Big] = \Theta^2_a E^{FK}_c \Big[ \sum_i |\hat{\mathcal C}^a_i|^2 \Big],
\end{equation}
is bounded away from $0$ and infinity.
Wu's celebrated result \cite{Wu66} on the decay of $\langle\sigma_x\sigma_y\rangle_c$ implies that, for the two-dimensional Ising model at the critical point,
we can take $\Theta_a=a^{15/8}$ so that $\mu^a_i := a^{15/8} \sum_{x \in {\mathcal C}^a_i}\delta(z-x)$ and $M^a_D := a^{15/8} \sum_{x \in a{\mathbb Z}^2 \cap D}\sigma_x$.


To each FK configuration we can associate a collection of loops on the medial lattice separating the FK clusters from the dual clusters,
where by dual clusters we mean maximal connected subsets of dual bonds, and a dual bond is an edge of the dual graph crossing perpendicularly
a primal edge that contains no FK bond. See Figure~\ref{fig:loops} for an example of an FK configuration with free boundary condition and the
corresponding collection of loops on the medial lattice.
We call a (random) collection of loops associated with an FK configuration in the way described above and shown in Fig.~\ref{fig:loops} a \emph{loop ensemble}.
We denote by $\{ \gamma^a_i \}$ the collection of all loops associated with the FK clusters $\{ {\mathcal C}^a_i \}$.
Each realization of $\{ \gamma^a_i \}$ can be seen as an element in a space of collections of loops with the Aizenman-Burchard metric~\cite{ab}.
(The latter is the induced Hausdorff metric on collections of curves associated to the metric on curves given by the infimum over monotone reparametrizations of the supremum norm.)
It follows from~\cite{ab} and the RSW-type bounds of~\cite{dhn} (see Section~ 5.2 there) that, as $a \to 0$, $\{ \gamma^a_i \}$ has
subsequential limits in distribution to random collections of loops in the Aizenman-Burchard metric. In the scaling limit, one gets
collections of nested loops that can touch (themselves and each other), but never cross.

\subsection{Non-zero external field} \label{sec:non-zero}

Appropriate FK representations exist also for the Ising model on the square lattice (or, indeed, on any graph) with an external field $H\neq0$. The ``standard'' one goes back to \cite{FK71} and involves adding a vertex $g$, called the ghost vertex, connected to all vertices of the square lattice and carrying either a plus or a minus spin, $\sigma_g= \pm 1$, in accordance with the sign of the external field (see Section~4.3 of \cite{FK71}). A spin configuration distributed according to the unique infinite-volume Gibbs distribution with $H\neq0$ and inverse temperature $\beta$ can be obtained by first taking a random-cluster (FK) bond configuration on the ``augmented'' square lattice obtained by adding the ghost spin as described above. In this case the distribution has two parameters: $p_1 = 1 - e^{-2\beta}$ for the edges between vertices of the square lattice, and $p_2 = 1 - e^{-2\beta H}$ for the edges connecting vertices of the square lattice to the ghost vertex. For each vertex $x$ in a cluster connected to $g$, one sets $\sigma_x=\sigma_g$. To clusters ${\mathcal C}^a_i$ not connected to $g$, one associates ($\pm 1$)-valued i.i.d. symmetric random variable $\eta_i$, and then sets $\sigma_x=\eta_i$ for all $x \in {\mathcal C}^a_i$.
Then the collection $\{ \sigma_x \}_{x \in a{\mathbb Z}^2}$ of spin variables is distributed according to the unique infinite-volume Gibbs distribution with $H\neq0$ and inverse temperature $\beta$.

This representation is not useful in the scaling limit $a \downarrow 0$, as we will discuss later (see Remark~\ref{rmk:standard} below). For this reason we introduce a somewhat different FK representation explained in detail in the next section. The latter is a direct consequence of the Edwards-Sokal coupling \cite{ES88} and it appears implicitly in \cite{CV16}. It was independently rediscovered and developed in \cite{cjn20} and \cite{cjn_coupling} where the authors observe how, contrary to the ``standard'' FK representation, it extends naturally to the near-critical scaling limit (see Remark \ref{rmk:standard} below).

%
%

\subsection{The Ising model in a bounded domain} \label{sec:bounded}

In this section we consider the Ising model on a finite graph $D^a = a{\mathbb Z}^2 \cap D$ with free or plus boundary condition, where $D$ is a bounded subset of ${\mathbb R}^2$. The boundary of $D^a$ will be denoted by $\partial D^a := \{ x \notin D^a : \exists y \in D^a \text{ with } |x-y|=a \}$. In order to define the \emph{random cluster model} associated with the Ising model (aka the FK-Ising model) on $D^a$, we introduce the following sets of edges:
\begin{eqnarray}
{\mathcal E}_i & = & \{ \{x,y\} : x,y \in D^a, |x-y|=a \} \\
{\mathcal E}_e & = & \{ \{x,y\} : x \in D^a, y \in \partial D^a, |x-y|=a \} \\
{\mathcal E} & = & {\mathcal E}_i \cup {\mathcal E}_e 
\end{eqnarray}
We call the edges in ${\mathcal E}_i$ and ${\mathcal E}_e$ 
\emph{internal} and \emph{external}, respectively. The distribution of the collection of spins $\sigma := \{ \sigma_x \}_{x \in D^a}$ for the Ising model on $D^a$ at inverse temperature $\beta$ with external field $H$ and free boundary condition is given by
\begin{equation} \label{free-bc}
P^{f}_{\beta,H}(\sigma) := \frac{1}{Z^f_{\beta,H}} \exp{\Big(\beta\sum_{ \{x,y\} \in \mathcal{E}_i} \sigma_x \sigma_y + \beta H \sum_{x \in D^a} \sigma_x \Big)},
\end{equation}
where
\begin{equation} \label{free-pf}
Z^f_{\beta,H} := \sum_{\sigma} \exp{\Big(\beta\sum_{ \{x,y\} \in \mathcal{E}_i} \sigma_x \sigma_y + \beta H \sum_{x \in D^a} \sigma_x \Big)}
\end{equation}
is the partition function of the model.
The distribution in the case of plus boundary condition is given by
\begin{equation} \label{plus-bc}
P^{+}_{\beta,H}(\sigma) := \frac{1}{Z^{+}_{\beta,H}} \exp{\Big(\beta\sum_{\{x,y\} \in \mathcal{E}_i} \sigma_x \sigma_y + \beta H \sum_{x \in D^a} \sigma_x + \beta\sum_{ \{x,y\} \in \mathcal{E}_e: x\in D^a} \sigma_x  \Big)},
\end{equation}
where
\begin{equation} \label{plus-pf}
Z^{+}_{\beta,H} := \sum_{\sigma} \exp{\Big(\beta\sum_{ \{x,y\} \in \mathcal{E}_i} \sigma_x \sigma_y + \beta H \sum_{x \in D^a} \sigma_x + \beta \sum_{ \{x,y\} \in \mathcal{E}_e: x\in D^a} \sigma_x\Big)}.
\end{equation}

The configuration space of the random cluster model is $\{0,1\}^{\mathcal E}$.
For each element $\omega$ of the configuration space and each edge $e=\{x,y\}$,
we say that edge $e$ is \emph{absent} or \emph{closed} if $\omega(e)=0$ and \emph{present} or \emph{open} if $\omega(e)=1$. We call $\omega$ a \emph{bond configuration}. A \emph{cluster} is a subset of $D^a \cup \partial D^a$ which is maximally connected using edges in ${\mathcal E}_i \cup {\mathcal E}_e$. We denote by $\mathscr{C}^a_D$ the collection of clusters restricted to $D^a$, that is
\begin{equation}
\mathscr{C}^a_D=\{\tilde{\mathcal{C}}^a\cap D^a: \tilde{\mathcal{C}}^a \text{ is a cluster in } D^a \cup \partial D^a\}\setminus\{\emptyset\}.
\end{equation}
To simplify the notation, in this section we write ${\mathcal C}_i$ for an element of $\mathscr{C}^a_D$ instead of ${\mathcal C}^a_i$ as in other sections. This shouldn't create any confusion since $a$ is fixed, but the reader should be aware of the fact that in later sections ${\mathcal C}$ will be used to denote an element of the collection $\mathscr{C}_D$ obtained from the scaling limit of $\mathscr{C}^a_D$ as $a \downarrow 0$ (see Theorem~\ref{thm:jointconv}).

For $\mathcal{C}\in\mathscr{C}_D^a$, we call $\mathcal{C}$ a \emph{boundary cluster} if $\mathcal{C}$ is the restriction of a cluster $\tilde{\mathcal{C}}$ (in  $D^a \cup \partial D^a$) which contains at least one element of $\partial D^a$; otherwise $\mathcal{C}$ is called an \emph{internal cluster}. The boundary is treated differently depending on the choice of boundary condition, as follows.

\noindent\emph{Free b.c.}: We set $\omega(e)=0$ for each $e \in {\mathcal E}_e$. In this case there are no boundary clusters.

\noindent \emph{Wired b.c.}: The vertices in $\partial D^a$ are identified and treated as a single vertex. Consequently, there is at most one boundary cluster. We denote by $\mathcal{C}_b$ this boundary cluster if it exists, and let $|\mathcal{C}_b|$ denote the number of vertices in $\mathcal{C}_b$.

We let ${\mathbf 1}(\omega) = | \{e \in {\mathcal E}: \omega(e)=1 \} |$ denote the number of open edges in the FK configuration, ${\mathbf 0}(\omega) = | \{e \in {\mathcal E}: \omega(e)=0 \} |$ denote the number of closed edges, and ${\bf C}(\omega)$ denote the number of \emph{internal} clusters.
With this notation, for any $0 \leq p \leq 1$, we define the random cluster measure
\begin{equation} \label{RC-distribution}
P^{FK}_{p}(\omega) := \frac{1}{Z^{FK}_{p}} p^{{\bf 1}(\omega)} (1-p)^{{\bf 0}(\omega)} 2^{{\bf C}(\omega)},
\end{equation}
where $Z^{FK}_{p} := \sum_{\omega \in \{0,1\}^{\mathcal E}} p^{{\bf 1}(\omega)} (1-p)^{{\bf 0}(\omega)} 2^{{\bf C}(\omega)}$ is the partition function of the model.
The random cluster measure corresponding to an Ising model on $D^a$ at inverse temperature $\beta$ with zero external field ($H=0$) is given by \eqref{RC-distribution} with the choice $p = 1 - e^{-2\beta}$. In this case we write
\begin{equation} \label{FK-distribution}
P^{FK}_{\beta,0}(\omega) := P^{FK}_{1-e^{-2\beta}}(\omega) = \frac{1}{Z^{FK}_{\beta,0}} (1-e^{-2\beta})^{{\bf 1}(\omega)} (e^{-2\beta})^{{\bf 0}(\omega)} 2^{{\bf C}(\omega)}
\end{equation}
where $Z^{FK}_{\beta,0}:=Z^{FK}_{1-e^{-2\beta}}$.

The following statements are immediate consequences of the Edwards-Sokal coupling of FK percolation and the Ising model \cite{ES88}.

\noindent\emph{Free b.c.}: A spin configuration on $D^a$ distributed according to $P^{f}_{\beta,0}$ can be obtained by
\begin{enumerate}
\item sampling an FK configuration according to \eqref{FK-distribution} with \emph{free boundary condition},
\item sampling an independent, ($\pm 1$)-valued, symmetric random variable $\eta_i$ for each FK cluster ${\mathcal C}_i \in \mathscr{C}^a_D$,
\item for each cluster ${\mathcal C}_i$, setting $\sigma_x=\eta_i$ for all $x \in {\mathcal C}_i$.
\end{enumerate}

\noindent\emph{Plus b.c.}: A spin configuration on $D^a$ distributed according to $P^{+}_{\beta,0}$ can be obtained by
\begin{enumerate}
\item sampling an FK configuration according to \eqref{FK-distribution} with \emph{wired boundary condition},
\item sampling an independent, ($\pm 1$)-valued, symmetric random variable $\eta_i$ for each internal cluster ${\mathcal C}_i$,
\item for each internal cluster ${\mathcal C}_i$, setting $\sigma_x=\eta_i$ for all $x \in {\mathcal C}_i$,
\item setting $\sigma_x=1$ for all $x$ belonging to the boundary cluster (if not empty).
\end{enumerate}

We note that coupled bond and spin configurations $(\omega,\sigma)$ generated by the Edwards-Sokal coupling are always \emph{compatible} in the sense that, if $x$ and $y$ belong to the same bond cluster, then $\sigma_x=\sigma_y$. If a $\omega$ and $\sigma$ are compatible, we write $\omega \sim \sigma$. This notation will be used in Corollary \ref{new-coupling} below and in its proof.

In the rest of the section, we will use $Z_{\beta,H}$ to denote either $Z^{f}_{\beta,H}$ or $Z^{+}_{\beta,H}$.
Moreover, letting $\tilde M^a_D := \sum_{x \in D^a} \sigma_x$ denote the (``\emph{bare}'') \emph{magnetization in $D^a$} and $E_{\beta,H}=\langle\cdot\rangle_{\beta,H}$ the expectation with respect to either $P^{f}_{\beta,H}$ or $P^{+}_{\beta,H}$, from \eqref{free-bc} and \eqref{plus-bc} we have that, for any suitable function $g$,
\begin{equation} \label{expectation-field}
E_{\beta,H}(g(\sigma)) = \frac{Z_{\beta,0}}{Z_{\beta,H}} E_{\beta,0}\Big( g(\sigma) e^{\beta H \tilde M^a_D} \Big).
\end{equation}
Taking the constant function $g(\sigma) \equiv 1$, this gives
\begin{equation} \label{pf-field}
Z_{\beta,H} = Z_{\beta,0} E_{\beta,0}\Big( e^{\beta H \tilde M^a_D} \Big)
= Z_{\beta,0} E^{FK}_{\beta,0}( e^{\beta H|\mathcal{C}_b|}\Pi_{i\neq b} \cosh(\beta H |{\mathcal C}_i|)),
\end{equation}
 where $|\mathcal{C}_b|=0$ if there is no boundary cluster, and the product is over all internal clusters in $\mathscr{C}^a_D$.

We will now present an FK representation for the Ising model with a non-zero external field that will be crucial when we discuss the near-critical scaling limit, in Section~\ref{sec:near-critical-sl}. 

\begin{corollary}[\cite{cjn20,cjn_coupling} ] \label{new-coupling}
Take $\beta>0$ and $H>0$.
For every $\omega \in \{0,1\}^{\mathcal E}$ with clusters $\{{\mathcal C}_i\}$, let
\begin{equation} \label{FK_external_field}
	P^{FK}_{\beta,H}(\omega) := \frac{e^{\beta H |\mathcal{C}_b|}\Pi_{i\neq b} \cosh(\beta H |{\mathcal C}_i|)}{E^{FK}_{\beta,0}( e^{\beta H |\mathcal{C}_b|}\Pi_{i\neq b} \cosh(\beta H |{\mathcal C}_i|))} P^{FK}_{\beta,0}(\omega).
\end{equation}
Furthermore, consider ($\pm 1$)-valued independent random variables $\{\eta_i\}$ such that
\begin{align} \label{C-to-g}
\eta_i=\begin{cases}
1 \text{ with probability } \frac{e^{\beta H |{\mathcal C}_i|}}{2 \cosh(\beta H |{\mathcal C}_i|)} = \tanh(\beta H |{\mathcal C}_i|) + \frac{1}{2}\big(1-\tanh(\beta H |{\mathcal C}_i|)\big) \\ -1 \text{ with probability } \frac{e^{-\beta H |{\mathcal C}_i|}}{2 \cosh(\beta H |{\mathcal C}_i|)} = \frac{1}{2}\big(1-\tanh(\beta H |{\mathcal C}_i|)\big)
\end{cases}
\end{align}

A spin configuration distributed according to $P^{f}_{\beta,H}$ can be obtained by sampling a configuration $\omega \in \{0,1\}^{\mathcal E}$ according to $P^{FK}_{\beta,H}$ on $D^a$ with free boundary condition and, for each cluster ${\mathcal C}_i$, setting $\sigma_x=\eta_i$ for all $x \in {\mathcal C}_i$.

A spin configuration distributed according to $P^{+}_{\beta,H}$ can be obtained by sampling a configuration $\omega \in \{0,1\}^{\mathcal E}$ according to $P^{FK}_{\beta,H}$ on $D^a$ with wired boundary condition, setting $\sigma_x=1$ for all $x$ in the boundary cluster (if not empty) and, for each internal cluster ${\mathcal C}_i$, setting $\sigma_x=\eta_i$ for all $x \in {\mathcal C}_i$.

The joint distribution of $\sigma$ and $\omega$ is
\begin{equation} \label{joint-dist}
P^{joint}_{\beta,H}(\sigma,\omega) := \frac{P^{FK}_{\beta,0}(\omega)}{E_{\beta,0}(e^{\beta H \tilde M^a_D})} \Big(\frac{1}{2}\Big)^{{\mathbf C}(\omega)} e^{\beta H \tilde M^a_D(\sigma)} {\mathbf
I}_{\{\omega \sim \sigma\}}.
\end{equation}
\end{corollary}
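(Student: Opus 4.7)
My plan is to reduce the whole statement to the standard Edwards-Sokal coupling at zero field via a Radon-Nikodym tilt. The Edwards-Sokal identities recalled just before the corollary can be rewritten compactly as $P^{joint}_{\beta,0}(\sigma,\omega) = P^{FK}_{\beta,0}(\omega) \cdot 2^{-\mathbf{C}(\omega)}\, \mathbf{I}_{\{\omega \sim \sigma\}}$ in the free case, with the analogous expression in the wired case (where the boundary cluster is deterministically assigned spin $+1$ and $\mathbf{C}(\omega)$ counts only internal clusters). Indeed, given $\omega$ the compatible $\sigma$ are precisely those constant on each cluster, and the coupling distributes them uniformly.

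I then \emph{define} $P^{joint}_{\beta,H}$ by the right-hand side of \eqref{joint-dist}, which is exactly the above joint measure tilted by $e^{\beta H \tilde M^a_D(\sigma)}$ and renormalized by $E_{\beta,0}(e^{\beta H \tilde M^a_D})$, and verify its two marginals. For the $\sigma$-marginal, since the tilting factor does not depend on $\omega$, summing out $\omega$ gives $e^{\beta H \tilde M^a_D(\sigma)} P^{f}_{\beta,0}(\sigma) / E_{\beta,0}(e^{\beta H \tilde M^a_D})$, which equals $P^{f}_{\beta,H}(\sigma)$ by the identity $Z_{\beta,H} = Z_{\beta,0}\, E_{\beta,0}(e^{\beta H \tilde M^a_D})$ already obtained in \eqref{pf-field}; the plus case is identical.

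For the $\omega$-marginal I sum over all $\sigma$ compatible with $\omega$. The sum factorizes across clusters: each internal cluster $\mathcal{C}_i$ contributes $\sum_{\eta=\pm 1} e^{\beta H \eta |\mathcal{C}_i|} = 2\cosh(\beta H |\mathcal{C}_i|)$, which cancels the factor $2^{-\mathbf{C}(\omega)}$ appearing in \eqref{joint-dist}, while in the wired case the boundary cluster is frozen to $+1$ and contributes $e^{\beta H |\mathcal{C}_b|}$ instead of a cosh. The result is precisely \eqref{FK_external_field}. Dividing \eqref{joint-dist} by this $\omega$-marginal yields the conditional law of $\sigma$ given $\omega$, which again factorizes across clusters: on each internal cluster the two signs are assigned independently with the probabilities in \eqref{C-to-g}, while the boundary cluster (if present) is fixed to $+1$. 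This is exactly the two-step sampling procedure described in the corollary. There is no substantive obstacle — once one guesses the right form \eqref{joint-dist}, the argument is pure bookkeeping; the only point requiring care is handling the boundary cluster separately in the wired (plus b.c.) case.
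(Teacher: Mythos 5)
Your proposal is correct and takes essentially the same approach as the paper's proof: both start from the joint law \eqref{joint-dist} and verify its two marginals and the conditional law of $\sigma$ given $\omega$ by direct computation, using the Edwards--Sokal identity and \eqref{pf-field}. The one small stylistic gain in your version is the observation that the tilting factor $e^{\beta H \tilde M^a_D(\sigma)}$ depends only on $\sigma$, which makes the $\sigma$-marginal computation immediate; the paper instead derives the $\sigma$-marginal by expanding $P^{Ising}_{\beta,0}(\sigma)$ over compatible $\omega$, but the underlying bookkeeping is the same.
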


\begin{remark} \label{rmk:dimensions}
Corollary \ref{new-coupling} is valid in any dimension and a similar result holds for arbitrary graphs. These results also extend from Ising models to Potts models --- see the appendix of \cite{cjn_coupling}.
\end{remark}
\begin{remark} \label{rmk:standard}
As noted in Section \ref{sec:non-zero}, the ``standard'' FK representation for the Ising model with a non-zero external field involves adding a vertex $g$, called the ghost vertex, connected to all vertices of $D^a$ and carrying either a plus or a minus spin, $\sigma_g = \pm 1$, in accordance with the sign of the external field (see Section~4.3 of \cite{FK71}).
This makes it unsuitable for taking scaling limits, in which the lattice spacing is sent to zero and individual vertices have no meaning.
The representation discussed in Corollary \ref{new-coupling}, instead, relies only on ``macroscopic'' objects, namely the bond clusters.
As we will see in Section \ref{sec:near-critical-sl}, with the choice $H=ha^{15/8}$, the appearance of the combination $H|{\mathcal C}_i|$ in \eqref{FK_external_field} and \eqref{C-to-g} and of $H\tilde M^a_D=hM^a_D$ in \eqref{joint-dist} crucially allows us to make sense of this representation in the near-critical scaling limit.
\end{remark}
\begin{remark} \label{rmk:ghost}
	Equation \eqref{C-to-g} can be interpreted as saying that the spins in a bond cluster ${\mathcal C}_i$ ``follow'' the sign of the external field $H$ with probability $\tanh(\beta H |{\mathcal C}_i|)$ and otherwise (i.e., with probability $[1-\tanh(\beta H |{\mathcal C}_i|)]$) take either $+1$ or $-1$ with equal probability. As in the previous remark, one can think of a ghost vertex $g$ carrying spin $\sigma_g=1$. In this case, however, the ghost vertex connects to FK clusters instead of individual vertices.
\end{remark}

\begin{proof}
We begin by checking, with the help of the Edwards-Sokal coupling and of \eqref{pf-field}, that the marginal distribution induced by $P^{joint}_{\beta,H}$ on the bond configuration $\omega$ is $P^{FK}_{\beta,H}$:
\begin{eqnarray}
\sum_{\sigma} P^{joint}_{\beta,H}(\sigma,\omega) & = & \frac{P^{FK}_{\beta,0}(\omega)}{E_{\beta,0}(e^{\beta H \tilde M^a_D})} \Big(\frac{1}{2}\Big)^{{\mathbf C}(\omega)}  e^{\beta H  |{\mathcal C}_b|} \sum_{\{\eta_i=\pm 1:i\neq b\}} e^{\beta H \sum_i \eta_i |{\mathcal C}_i|} \\
& = & \frac{P^{FK}_{\beta,0}(\omega)}{E_{\beta,0}(e^{\beta H \tilde M^a_D})} e^{\beta H  |{\mathcal C}_b|} \Pi_{ i: i\neq b} \Big(\sum_{\eta_i=\pm 1} \frac{e^{\beta H \eta_i |{\mathcal C}_i|}}{2}\Big)
= P^{FK}_{\beta,H}(\omega).
\end{eqnarray}

Next, using again \eqref{pf-field} and the Edwards-Sokal coupling, we check that the marginal distribution induced by $P^{joint}_{\beta,H}$ on the spin configuration $\sigma$ is $P^{Ising}_{\beta,H}$:
\begin{eqnarray}
P^{Ising}_{\beta,H}(\sigma) & = & \frac{Z_{\beta,0}}{Z_{\beta,H}} P^{Ising}_{\beta, 0}(\sigma) e^{\beta H \tilde M^a_D(\sigma)} \\
& = & \frac{1}{E_{\beta,0}(e^{\beta H  \tilde M^a_D})} \sum_{\omega \sim \sigma} P^{FK}_{\beta,0}(\omega) \Big(\frac{1}{2}\Big)^{{\mathbf C}(\omega)} e^{\beta H \tilde M^a_D(\sigma)} \\
& = & \sum_{\omega} P^{joint}_{\beta,H}(\sigma,\omega).
\end{eqnarray}

Finally, we check the distribution of the $\eta_i$ variables, given by \eqref{C-to-g}.
To do this, we introduce the functions $S_i(\sigma,\omega)$ defined on pairs of compatible configurations $\{(\sigma,\omega): \omega \sim \sigma \}$ by letting $S_i(\sigma,\omega) = \sigma_x$ for any $x \in {\mathcal C}_i$, and note that $\tilde M^a_D(\sigma) = \sum_i S_i(\sigma,\omega)|{\mathcal C}_i|$.
Using \eqref{FK_external_field} and \eqref{joint-dist}, and letting ${\mathcal C}(x)$ denote the FK cluster of $x$, we have that
\begin{eqnarray}
P^{joint}_{\beta,H}(\sigma_x = \pm 1 | \omega) & = & \frac{P^{joint}_{\beta,H}( \sigma_x = \pm 1 \text{ and } \omega)}{P^{FK}_{\beta,H}(\omega)} \\
& = & \frac{1}{\Pi_{ i\neq b} \cosh(\beta H |{\mathcal C}_i|)} \sum_{\sigma: \sigma_x = \pm 1} \Pi_{ i\neq b} \Big( \frac{1}{2} e^{\beta H S_i(\sigma,\omega) |{\mathcal C}_i|} \Big) \\
& = & \frac{e^{\pm\beta H |{\mathcal C}(x)|}}{2\cosh(\beta H |{\mathcal C}(x)|)}.
\end{eqnarray}
This concludes the proof of the corollary.
\end{proof}

\section{The critical scaling limit} \label{sec:critical-s-lim}

In this section we discuss the critical scaling limit of FK-Ising clusters and of the Ising magnetization field. We restrict attention to bounded domains except where explicitly stated, as in Theorems \ref{thm:full-plane} and \ref{thm:conf-cov} and Remarks \ref{rem:joint-convergence} and \ref{rem:cme}.

\subsection{Conformal loop and measure ensembles} \label{sec:CLE&CME}

Theorem 1.1 of \cite{KS19} shows that, at the critical point, the (random) collection of loops $\{ \gamma^a_i \}$ coverges to a Conformal Loop Ensemble (CLE),
namely CLE$_{16/3}$, as $a \downarrow 0$.
CLEs are random collections of closed curves \cite{CN04,CN06,She09,SW12,Wer03} which provide a useful tool to encode and analyze the scaling limit geometry of,
for instance, Bernoulli and FK percolation clusters, Ising spin clusters, and loops in the $O(n)$ model. There is a one-parameter family of CLEs, CLE$_{\kappa}$,
indexed by a parameter $\kappa \in [8/3,8]$. For each $8/3 < \kappa \leq 8$, the loops of CLE$_{\kappa}$ locally ``look like'' SLE$_{\kappa}$ (see \cite{Sch00}).
At the extremes, CLE$_8$ almost surely consists of a single space-filling loop, which is the scaling limit of the outer boundary of the free uniform spanning tree (see \cite{LSW04}),
and CLE$_{8/3}$ almost surely contains no loops at all. When $8/3<\kappa<8$, the collection of loops in a CLE$_{\kappa}$ is almost surely countably infinite.
When $\kappa=6$, it is equivalent to the random collection of loops described in \cite{CN06}, where it was shown to arise as a scaling limit of the cluster boundaries of
critical site percolation on the triangular lattice.

The Schramm-Loewner Evolution (SLE) was introduced by Schramm \cite{Sch00} to describe the scaling limit of interfaces in critical models, where conformal invariance
is believed to emerge in such a limit. The introduction of SLE, combined with the work of Smirnov on percolation \cite{Smi01} and of Lawler, Schramm and Werner on the
loop erased random walk and the uniform spanning tree \cite{LSW04}, spurred a flurry of activity and led to substantial progress in the rigorous analysis of various critical
models, including percolation (see, e.g., \cite{SW01,CN04,lsw5,CN06,cfn06a,cfn06b,CN07} for some of the early papers) and the Ising model (see \cite{Smi07,cn09,Smirnov10}
for a sample of the early work).

Building on the convergence of $\{ \gamma^a_i \}$ to CLE$_{16/3}$ \cite{KS19}, Camia, Conijn and Kiss \cite{CCK19} provided the first construction of the CME for FK-Ising percolation.
As we will show below, the FK-Ising CME can be used to construct a conformal field $\Phi^0$ which is an element of an appropriate Sobolev space. $\Phi^0$ can be shown
to correspond to the scaling limit of the Ising magnetization at the critical point. We now introduce some notation and the relevant results from \cite{KS19} and \cite{CCK19}
before giving the construction of $\Phi^0$.

For any bounded domain $D \subset \mathbb{R}^2$, recall from Section \ref{sec:bounded} that $D^a=a\mathbb{Z}^2\cap D$ denotes its $a$-approximation. Let $L_1, L_2:[0,1]\rightarrow \bar{D}$ be two loops in the closure $\bar D$ of $D$. The distance between $L_1$ and $L_2$ is defined by
\[d_{\text{loop}}(L_1,L_2)=\inf\sup_{t\in[0,1]}|L_1(t)-L_2(t)|,\]
where the infimum is over all choices of parametrizations of $L_1,L_2$ from the interval $[0,1]$. The distance between two closed sets of loops, $F_1$ and $F_2$, is defined by the Hausdorff metric as follows:
\[d_{\text{LE}}(F_1,F_2)=\inf\{\epsilon>0: \forall L_1\in F_1,~\exists L_2\in F_2 \text{ s.t. } d_{\text{loop}}(L_1,L_2)\leq\epsilon\text{ and vice versa}\}.\]

The following theorem from \cite{KS19} establishes the convergence of the critical FK-Ising loop ensemble on the medial lattice (see  Figure \ref{fig:loops} above and Section 1.2.2 of \cite{KS19}) to CLE$_{16/3}$.
For a discrete domain $D^a = a{\mathbb Z}^2 \cap D$, we let $\Gamma^a_D$ denote the FK-Ising loop ensemble on the medial lattice.
\begin{theorem}[Theorem 1.1 in \cite{KS19}]\label{thmconvcle}
Consider critical FK-Ising percolation in a discrete domain $D^a$ with free or wired boundary condition. The FK-Ising loop ensemble $\Gamma^a_D$ converges in distribution
to CLE$_{16/3}$ in $D$ in the topology of convergence defined by $d_{\emph{LE}}$.
\end{theorem}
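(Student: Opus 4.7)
The plan is to establish the convergence in two main steps: first prove tightness of $\{\Gamma^a_D\}_{a>0}$ in the topology induced by $d_{\text{LE}}$, and then identify every subsequential limit with CLE$_{16/3}$. Since the statement mentions both free and wired boundary conditions, one handles each by choosing the appropriate exploration (free/wired interfaces start from a boundary arc of matching type) and then uses the same argument.

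For tightness, I would combine the Aizenman-Burchard framework mentioned after Figure~\ref{fig:loops} with the RSW-type crossing estimates for critical FK-Ising percolation from \cite{dhn}. The key input is a polynomial upper bound, uniform in $a$, on the probability of $k$ disjoint monochromatic and bichromatic crossings of a thin annulus; combined with the Aizenman-Burchard criterion this forces equicontinuity of each loop (after a suitable parametrization) and controls the number of loops of macroscopic diameter. Together these yield precompactness both in the curve topology $d_{\text{loop}}$ and in the Hausdorff topology $d_{\text{LE}}$ on closed collections of loops, so that subsequential limits exist by Prokhorov.

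To identify the limit, I would encode $\Gamma^a_D$ through an exploration process and use the Smirnov fermionic observable machinery of \cite{Smirnov10}, together with the Chelkak--Duminil-Copin--Hongler--Kemppainen--Smirnov precompactness/identification framework, to show that the interface discovered by a chordal or radial exploration targeted at a boundary point converges to chordal or radial SLE$_{16/3}(\rho)$ with the appropriate force point dictated by the boundary condition. Iterating this exploration in a branching fashion and using conformal Markovianity of the limit gives convergence of the discrete exploration tree to a branching SLE$_{16/3}$, whose collection of loops \emph{is} CLE$_{16/3}$ in the sense of Sheffield. Combined with the tightness above, this determines the law of every subsequential limit of $\Gamma^a_D$ and concludes the proof.

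The hardest step will be passing from convergence of single interfaces to joint convergence of the \emph{full} loop ensemble in $d_{\text{LE}}$. Two technical issues are genuine: (i) verifying Kemppainen--Smirnov condition (C$_2$) uniformly along every branch of the discrete exploration tree, so that all the interfaces one needs to discover converge \emph{jointly}, not merely marginally; and (ii) controlling that no loops are lost or double-counted in the limit, which requires ruling out macroscopic pinch points and spurious microscopic loops via the six-arm-type polynomial bounds of \cite{dhn}. Because $\kappa=16/3>4$, limit loops touch themselves and each other, so these estimates are essential to ensure that the boundary of the explored region behaves well. Once (i) and (ii) are in place, an application of a CLE characterization theorem (Sheffield--Werner) upgrades the tree-level convergence to the loop-ensemble convergence asserted in Theorem~\ref{thmconvcle}.
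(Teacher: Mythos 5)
The paper does not prove this theorem: it is stated explicitly as Theorem~1.1 of \cite{KS19} and imported as an external input, so there is no internal proof for your sketch to be checked against. Your outline is nevertheless a reasonable high-level description of the strategy actually used by Kemppainen and Smirnov: precompactness via Aizenman--Burchard tightness together with the RSW-type bounds of \cite{dhn}, and identification of the limit via the fermionic observable of \cite{Smirnov10} and a branching (radial) SLE$_{16/3}$ exploration, with the Sheffield--Werner characterization of CLE used to recognize the limiting loop ensemble. You also correctly flag the two genuinely delicate points, namely verifying the Kemppainen--Smirnov condition uniformly along the exploration tree so that convergence of individual interfaces upgrades to joint convergence, and arm-exponent estimates to rule out loss or spurious creation of macroscopic loops when $\kappa > 4$.

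One cautionary note: what you present is a proof \emph{program}, not a proof. Each of the steps you list (branching exploration convergence, condition (C$_2$) along every branch, the six-arm-type control of pinch points, and the passage from tree convergence in the Carath\'eodory/driving-function sense to convergence of the full loop collection in $d_{\text{LE}}$) is the content of a substantial part of \cite{KS19} and its companion papers, not something that follows formally from the ingredients you name. In particular, passing from convergence of discrete exploration trees to convergence of the collection of loops in the Hausdorff-type metric $d_{\text{LE}}$ is nontrivial because a priori the tree encodes loops only indirectly, and one must show both that every macroscopic loop is captured by the tree and that the encoding is continuous on a set of full measure for the limit law. If you were to turn this into an actual proof, those are the places where the real work lies; as a blind reconstruction of the cited theorem's architecture, however, it is accurate.
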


\begin{remark}\label{rem:bc}
A loop ensemble is a collection of loops on the medial lattice between FK clusters and dual clusters.
Since the critical point is the self-dual point for FK percolation, the critical FK loop ensembles for free and wired boundary conditions have the same distribution.
\end{remark}

For any configuration $\omega$ in critical FK percolation on $D^a$ with free or wired boundary condition, let $\mathscr{C}^a_D$ denote the set of clusters of $\omega$ in $D^a$.
For $\mathcal{C}^a_i\in\mathscr{C}^a_D$, let $\mu^a_i:=a^{15/8}\sum_{x\in\mathcal{C}^a_i}\delta_x$ be the rescaled (by $\Theta_a=a^{15/8}$) counting measure of
$\mathcal{C}^a_i$, and let $\mathscr{M}^a_D = \{\mu^a_i\}$.
For two collections, $\mathscr{C}_1$ and $\mathscr{C}_{2}$, of subsets of $\bar D$, the distance between $\mathscr{C}_1$ and $\mathscr{C}_{2}$ is defined by
\begin{equation}\label{eq:metric_clusters}
d_{\text{cl}}(\mathscr{C}_1, \mathscr{C}_{2}):=\inf\{\epsilon>0: \forall {\mathcal C}_1 \in \mathscr{C}_1~\exists {\mathcal C}_2 \in \mathscr{C}_{2} \text{ s.t. }d_H({\mathcal C}_1, {\mathcal C}_2)\leq\epsilon \text{ and vice versa}\},
\end{equation}
where $d_H$ is the Hausdorff distance.
Similarly, from two collections, $\mathscr{S}_1$ and $\mathscr{S}_{2}$, of measures on $D$, the distance between $\mathscr{S}_1$ and $\mathscr{S}_{2}$ is defined by
\begin{equation}\label{eq:metric}
d_{\text{meas}}(\mathscr{S}_1, \mathscr{S}_{2}):=\inf\{\epsilon>0: \forall \mu \in \mathscr{S}_1~\exists \nu \in \mathscr{S}_{2} \text{ s.t. }d_P(\mu, \nu)\leq\epsilon \text{ and vice versa}\},
\end{equation}
where $d_P$ is the Prokhorov distance. The following theorem establishes convergence of normalized counting measures; the result follows directly from Theorems 11 and 13
(see also Theorems 1 and 2 for simpler but slightly weaker versions), Theorem 14 and Lemma 9 of \cite{CCK19}.

\begin{theorem}[Theorems 11, 13, 14 and Lemma 9 of \cite{CCK19}]\label{thm:jointconv}
Let $D$ be a bounded, simply connected domain. As $a \downarrow 0$, $(\mathscr{C}^a_D, \mathscr{M}^a_D)$ converge jointly in distribution to $(\mathscr{C}_D, \mathscr{M}_D)$ where $\mathscr{C}_D$ is a collection of subsets of $\bar D$ and $\mathscr{M}_D$ is a collection of mutually orthogonal finite measures such that for every ${\mathcal C} \in \mathscr{C}_D$
there is a $\mu^0_{\mathcal C} \in \mathscr{M}_D$ with $\text{\emph{supp}}(\mu^0_{\mathcal C}) = {\mathcal C}$.
The topology of convergence is defined by $d_{\text{cl}} \times d_{\emph{meas}}$.

Moreover, the joint law of $(\Gamma^a_D, \mathscr{C}^a_D, \mathscr{M}^a_D)$ converges in distribution to the joint law of CLE$_{16/3}$ in $D$,
$\mathscr{C}_D$ and $\mathscr{M}_D$, with $\mathscr{C}_D$ and $\mathscr{M}_D$  measurable with respect to CLE$_{16/3}$ in~$D$.
\end{theorem}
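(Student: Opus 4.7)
The plan is to upgrade the loop convergence of Theorem~\ref{thmconvcle} to joint convergence of loops, clusters, and rescaled counting measures. First I would invoke Skorohod's representation theorem so that, on a suitable probability space, the FK-Ising loop ensemble $\Gamma^a_D$ converges almost surely in $d_{\text{LE}}$ to a realization $\Gamma$ of CLE$_{16/3}$ in $D$. In a discrete FK configuration, each cluster $\mathcal{C}^a_i$ is naturally sandwiched between an outer loop of $\Gamma^a_D$ (its external boundary on the medial lattice) and the family of dual loops it immediately encloses. Using this, I would build the candidate limit $\mathscr{C}_D$ as a deterministic functional of $\Gamma$: for each CLE loop $\gamma$ regarded as ``primal'', let the associated $\mathcal{C} \in \mathscr{C}_D$ be the closure of the set of points lying inside $\gamma$ but outside the CLE loops immediately nested within $\gamma$. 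This construction makes $\mathscr{C}_D$ (and, below, $\mathscr{M}_D$) manifestly measurable with respect to CLE$_{16/3}$.

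For the convergence $\mathscr{C}^a_D \to \mathscr{C}_D$ in $d_{\text{cl}}$, tightness of the collections of closed sets in the Hausdorff distance follows from Aizenman--Burchard estimates as applied to the loop ensemble (Section~5.2 of \cite{dhn}), which in particular imply that clusters of macroscopic diameter remain separated from each other by macroscopic loops. Given the a.s.\ loop convergence from the coupling, the matching between discrete and limit clusters is produced by pairing each macroscopic cluster with the outer/inner loops that bound it and using Hausdorff convergence. The residual contribution of microscopic clusters is controlled by the RSW-based one-arm upper bound, showing that for any $\epsilon>0$ only finitely many clusters have diameter $\geq \epsilon$ with high probability uniformly in $a$.

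The heart of the proof is the convergence of the rescaled counting measures $\mu^a_i = a^{15/8}\sum_{x\in\mathcal{C}^a_i}\delta_x$. The choice of exponent $15/8$ is precisely that which, via Wu's estimate $\langle\sigma_x\sigma_y\rangle_c \asymp |x-y|^{-1/4}$, makes the expected squared cluster mass $a^{30/8}\sum_{x,y\in\mathcal{C}^a_i}1$ of order unity on unit scales. I would derive uniform second-moment bounds $\sup_a E^{FK}_c[\mu^a_i(f)^2] < \infty$ for smooth compactly supported $f$ from this two-point estimate plus the FK representation identity~\eqref{eq:Theta}, and use these to obtain tightness of each $\mu^a_i$ as a finite positive random measure (after suitable truncation to macroscopic clusters). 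For identification, the key is to show that the second moments converge, via a Riemann-sum argument using the known limit of the rescaled two-point function and the convergence of the indicator $\mathbf{1}_{x,y \in \mathcal{C}^a_i}$ under the coupling; this pins down the limit $\mu^0_{\mathcal{C}}$ uniquely, and joint convergence of $(\mathscr{C}^a_D,\mathscr{M}^a_D)$ in $d_{\text{cl}}\times d_{\text{meas}}$ follows.

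The hardest step, I expect, is proving that $\mathrm{supp}(\mu^0_{\mathcal{C}}) = \mathcal{C}$ exactly (not merely a subset) and that the limiting measures are mutually orthogonal despite the fact that distinct CLE$_{16/3}$ loops may touch. The inclusion $\mathrm{supp}(\mu^0_{\mathcal{C}}) \subseteq \mathcal{C}$ is immediate, but the reverse requires a quantitative density lemma: every point $x \in \mathcal{C}$ has $\mu^0_{\mathcal{C}}(B_r(x)) > 0$ for all $r>0$ a.s., which I would obtain from a lower bound on the probability that a lattice ball $B_r(x) \cap a\mathbb{Z}^2$ contains a definite fraction of $\mathcal{C}^a_i$-vertices, using RSW and a second-moment (Paley--Zygmund) argument. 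Mutual orthogonality then follows, since distinct limiting clusters $\mathcal{C}\neq\mathcal{C}'$ can intersect only along their common boundary arcs, which are lower-dimensional sets carrying zero $\mu^0_{\mathcal{C}}$-mass by the same second-moment estimate applied to a shrinking neighborhood of that boundary. Combining all these pieces with the coupling delivers the joint convergence of $(\Gamma^a_D,\mathscr{C}^a_D,\mathscr{M}^a_D)$ and the measurability statement.
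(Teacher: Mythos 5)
The paper does not itself prove this theorem: it is quoted verbatim as a consequence of Theorems 11, 13, 14 and Lemma 9 of \cite{CCK19}, so there is no in-text proof to compare against. Judged against what the \cite{CCK19} construction actually requires, most of your scaffolding is the right kind of toolbox and is broadly in the same spirit --- Skorohod coupling so that $\Gamma^a_D \to \Gamma$ a.s., Aizenman--Burchard/RSW for tightness of the cluster collection in $d_{\text{cl}}$, the identity \eqref{eq:Theta} together with Wu's estimate for uniform second-moment bounds, and a Paley--Zygmund density lemma for $\text{supp}(\mu^0_{\mathcal{C}})=\mathcal{C}$ and mutual orthogonality.

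The genuine gap is in the identification step. You write that convergence of the second moments ``pins down the limit $\mu^0_{\mathcal{C}}$ uniquely,'' but this is not so: convergence of $E^{FK}_c[\mu^a_i(f)^2]$ (or of $E^{FK}_c[\mu^a_i(f)\mu^a_j(g)]$) to a finite number gives neither existence of a limiting random variable nor its measurability with respect to CLE. Note that $\mu^a_i(f)$ is already a deterministic functional of the discrete loop ensemble $\Gamma^a_D$; the obstacle is that this functional depends on lattice-scale information, and loop convergence in $d_{\text{LE}}$ says nothing about it. What is actually needed is a quantitative statement that the counting measure is asymptotically continuous with respect to the mesoscopic loop structure. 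Concretely, the missing ingredient is a concentration (conditional second-moment) estimate: writing $\mu^{a,\epsilon}_i(f)$ for the conditional expectation of $\mu^a_i(f)$ given the loops of diameter at least $\epsilon$, one must show that $E\big[(\mu^a_i(f)-\mu^{a,\epsilon}_i(f))^2\big]\to 0$ as $\epsilon\downarrow 0$, uniformly in $a$. It is this conditional variance bound --- not the unconditional second moment --- that lets one pass $a\downarrow 0$ at fixed $\epsilon$ (using the Skorohod-coupled loop convergence, since the conditional expectation depends only on mesoscopic data) and then send $\epsilon\downarrow 0$ to produce a limiting measure that is measurable with respect to CLE$_{16/3}$. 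Without it, your Riemann-sum computation is only a consistency check on a putative limit whose existence has not been established; supplying exactly this estimate is the substantive content of the cited theorems of \cite{CCK19}.
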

We call the collection of measures $\mathscr{M}_D$ in the previous theorem a CME$_{16/3}$ in $D$ for its relation to CLE$_{16/3}$ in $D$.

A full-plane version of CLE can be constructed either by taking an increasing sequence of simply connected domains $D_n$ with $\cup_{n=1}^{\infty} D_n = {\mathbb C}$, and for each $n \in {\mathbb N}$ letting $\Gamma_n$ be a CLE$_{\kappa}$ in $D_n$, and then taking $\Gamma$ to be the limit of $\Gamma_n$ as $n \to \infty$ (see \cite{MWW16} for a detailed proof that the limit exists and does not depend on the sequence $(D_n)$), or equivalently by means of a branching SLE (see Section 2.3 of \cite{GMQ19}).
Theorem 3 of \cite{CCK19} shows that a full-plane CME$_{16/3}$ also exists. Combining these two results leads to the following theorem.
\begin{theorem} \label{thm:full-plane}
Let ${\mathbb P}_n$ denote the joint distribution of $(\Gamma_n,\mathscr{C}_n, \mathscr{M}_n)$ where $\Gamma_n$ and $\mathscr{M}_n$ are a CLE$_{16/3}$ and
its associated CME$_{16/3}$ in $[-n,n]^2$, respectively, and $\mathscr{C}_n$ is the collection of supports of the measures in $\mathscr{M}_n$.
There exists a probability measure $\mathbb P$ which is the full-plane limit of the probability measures ${\mathbb P}_n$ in the sense that, for every bounded domain $D$,
the restriction ${\mathbb P}_n|_D$ of ${\mathbb P}_n$ to $D$ converges to the restriction ${\mathbb P}|_D$ of $\mathbb P$ to $D$ as $n \to \infty$.
\end{theorem}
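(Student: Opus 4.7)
The plan is to combine the two marginal full-plane constructions already cited (full-plane CLE$_{16/3}$ via \cite{MWW16} and full-plane CME$_{16/3}$ via Theorem~3 of \cite{CCK19}) and to upgrade marginal convergence to joint convergence using Theorem~\ref{thm:jointconv}, which identifies $\mathscr{M}_n$ as a functional of $\Gamma_n$ inside each bounded simply connected domain. Fix any bounded $D \subset \mathbb{R}^2$, and for all $n$ large enough that $D \subset [-n,n]^2$ consider the restrictions $\mathbb{P}_n|_D$ of $(\Gamma_n,\mathscr{C}_n,\mathscr{M}_n)$ to $D$. The marginal law of $\Gamma_n|_D$ converges to the restriction to $D$ of a full-plane CLE$_{16/3}$ by \cite{MWW16}, and the marginal law of $(\mathscr{C}_n,\mathscr{M}_n)|_D$ converges to the restriction to $D$ of a full-plane CME$_{16/3}$ by Theorem~3 of \cite{CCK19}. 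Marginal convergence implies marginal tightness, hence tightness of $\{\mathbb{P}_n|_D\}$ in the product topology.

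By Prokhorov, every subsequence of $\mathbb{P}_n|_D$ admits a further subsequence converging weakly to some joint law $\mathbb{Q}$, whose marginals are forced to be the two restrictions just identified. To pin down $\mathbb{Q}$ uniquely, I would invoke Theorem~\ref{thm:jointconv}: for every bounded simply connected $D' \supset D$ and every $n$ with $[-n,n]^2 \supset D'$, the triple $(\Gamma_n, \mathscr{C}_n, \mathscr{M}_n)$ restricted to $D'$ satisfies the measurability relation that $(\mathscr{C},\mathscr{M})|_{D'}$ is a measurable functional of $\Gamma|_{D'}$. Taking a further subsequential limit inside $D'$ and then restricting to $D$, $\mathbb{Q}$ must coincide with the natural coupling of the restrictions of the full-plane CLE and CME. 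Since this identification does not depend on the subsequence, the full sequence $\mathbb{P}_n|_D$ converges to a common limit, which we call $\mathbb{P}|_D$. A standard consistency check (using the compatibility of restrictions to nested bounded domains) shows that these $\mathbb{P}|_D$ are the restrictions of a single full-plane probability measure $\mathbb{P}$.

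The main obstacle is the identification step: the measurability in Theorem~\ref{thm:jointconv} is stated for a fixed bounded simply connected domain, whereas here we need to identify the joint limit when the ambient domains $[-n,n]^2$ grow unboundedly, and in principle clusters that touch $\partial[-n,n]^2$ could carry information into $D$ that survives the limit. The cleanest way around this is to exploit the RSW-type estimates of \cite{dhn} together with the construction of the full-plane CME in \cite{CCK19}: such bounds imply that the $\mu^a_i$-mass inside $D$ contributed by clusters of diameter comparable to $n$ is negligible as $n \to \infty$, so that $\mathscr{M}_n|_D$ is asymptotically a local functional of the CLE in a slightly larger domain $D' \supset D$. Modulo this spatial locality argument, the joint convergence reduces to applying Theorem~\ref{thm:jointconv} inside an auxiliary $D'$ and then letting $D'$ exhaust $\mathbb{R}^2$.
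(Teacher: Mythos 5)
Your proposal is essentially the paper's own approach: the paper also obtains the full-plane limit by citing \cite{MWW16} for the CLE$_{16/3}$ marginal and Theorem~3 of \cite{CCK19} for the CME$_{16/3}$ marginal, and then (in Remark~\ref{rem:joint-convergence}) observes that the joint convergence of $(\Gamma_n,\mathscr{C}_n,\mathscr{M}_n)$ follows because $\mathscr{C}_n$ and $\mathscr{M}_n$ are measurable with respect to $\Gamma_n$. Where you deviate slightly is at the technical level: you use tightness and Prokhorov to extract subsequential joint limits and then invoke the measurability relation from Theorem~\ref{thm:jointconv} to identify them, whereas the paper's remark leans on the one-point compactification $\hat{\mathbb C}$ with the metric $\Delta(u,v)$ of \eqref{new-dist} and an application of Kolmogorov's extension theorem (borrowed from the proof of Theorem~3 of \cite{CCK19}). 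Both routes lead to the same conclusion and rely on the same two external constructions. The locality concern you raise in the final paragraph --- that clusters touching $\partial[-n,n]^2$ might carry information into a fixed bounded $D$ in the limit --- is a legitimate subtlety that the paper's remark does not spell out; it is implicitly handled within the cited full-plane CME construction of \cite{CCK19}, which uses exactly the kind of RSW-type estimates you propose. So your additional care is compatible with, and fills in a gap glossed over by, the paper's very terse treatment.
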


\begin{remark} \label{rem:joint-convergence}
	In treating the full-plane versions of CLE and CME it is convenient to consider the one-point (Alexandroff) compactification $\hat{\mathbb C}$ of $\mathbb C$, i.e., the Riemann sphere $\mathbb{\hat{C}}:=\mathbb{C}\cup\left\{ \infty\right\}$, and to replace the Euclidean distance with
\begin{equation} \label{new-dist}
\Delta(u,v) := \inf_{\varphi} \int\frac{|\varphi'(s)|}{1+|\varphi(s)|^2}ds,
\end{equation}
where we take the infimum over all continuous differentiable paths $\varphi(s)$ in $\mathbb C$ from $u$ to $v$ and	$|\cdot|$ denotes the Euclidean norm.
	Doing this ensures that the sequence $\Gamma_n$ has a unique limit in distribution as $n \to \infty$ by an application of Kolmogorov's extension theorem (see the proof of Theorem 3 of \cite{CCK19}). The joint convergence of $(\Gamma_n,\mathscr{C}_n,\mathscr{M}_n)$ follows from the fact that $\mathscr{C}_n$ and $\mathscr{M}_n$ are measurable with respect to $\Gamma_n$.
\end{remark}
\begin{remark} \label{rem:cme}
Theorem 4 of \cite{CCK19} shows that the full-plane collections of measures and their supports $(\mathscr{C},\mathscr{M})$ that are the limits of
$(\mathscr{C}_n,\mathscr{M}_n)$ are conformally invariant/covariant. This explains the name \emph{Conformal} Measure Ensemble (CME) for $\mathscr{M}$. Similar considerations apply to $\mathscr{M}_D$.
\end{remark}

\subsection{The magnetization field} \label{sec:m-field}

Combining \eqref{eq:lattice-field} with the results in the previous subsection, it is tempting to try to define a continuum magnetization field as
\begin{equation} \label{eq:cont-field}
\Phi^0_D(f) = \sum_{\mathcal{C}\in\mathscr{C}_D} \eta_{\mathcal{C}} \mu^0_{\mathcal{C}}(f),
\end{equation}
where $\mu^0_{\mathcal C} \in \mathscr{M}_D$ with $\text{supp}(\mu^0_{\mathcal C}) = {\mathcal C}$ and $\{ \eta_{\mathcal C} \}_{\mathcal{C}\in\mathscr{C}_D}$ is a collection of independent random variables such that $\eta_{\mathcal C}=1$ if ${\mathcal C}$ is a boundary cluster and $\eta_{\mathcal C}$ is a $(\pm 1)$-valued symmetric random variable otherwise.
However, due to scale invariance, even for bounded domains $D$ or functions $f$ of bounded support, the sum above contains infinitely many terms,
and the scaling covariance of the $\mu^0_{\mathcal{C}}$'s suggests that the collection $\{\mu^0_{\mathcal{C}}(f)\}_{\mathcal{C}\in\mathscr{C}_D}$
may in general not be absolutely summable.

In order to make sense of \eqref{eq:cont-field}, we introduce the $\varepsilon$-cutoff magnetization field
\begin{equation} \label{eq:cutoff-cont-field}
\Phi^0_{D,\varepsilon}(f) := \sum_{\mathcal{C}\in\mathscr{C}_D: \diam(\mathcal{C})>\varepsilon} \eta_{\mathcal{C}} \mu^0_{\mathcal{C}}(f),
\end{equation}
where the sum is now finite by Proposition 2.2 of \cite{cn09} whenever $D$ is bounded or $f$ has bounded support.

Before stating the next result, which provides a precise meaning for \eqref{eq:cont-field}, we need some preliminaries.
For a bounded domain $D$, let $\{ u_i \}$ denote an eigenbasis of the negative Laplacian (i.e., $-\Delta$) on $D$ with Dirichlet boundary condition with eigenvalues $0 \leq \lambda_1 \leq \lambda_2 \leq \ldots \uparrow \infty$. The functions $\{ u_i \}$ form an orthonormal basis of $L^2(D)$ and of the classical Sobolev space ${\mathcal H}_0^1(D)$, and they satisfy $\|u_i\|_{{\mathcal H}^1_0(D)}^2 = \lambda_i$. As a consequence, each $f \in {\mathcal H}^1_0(D)$ has a unique orthogonal decomposition $f=\sum_ia_iu_i$ with $\|f\|^2_{{\mathcal H}^1_0(D)} = \sum_i |a_i|^2 \lambda_i$.
Since $C^{\infty}_0 \subset {\mathcal H}^1_0(D)$, the same holds for each $f \in C^{\infty}_0(D)$.
Moreover, if $f\in C_0^\infty(D)$, then
\begin{equation}\label{e:ww}
\sum_i |a_i|^2 \lambda_i^\alpha <\infty, \quad \forall \alpha>0.
\end{equation}

To see why \eqref{e:ww} holds, following \cite{CGPR}, one can assume without loss of generality that $\alpha\geq 1$ is an integer. For such an $\alpha$ and for every $f\in C_0^\infty(D)$, one has that $(-\Delta)^\alpha f\in C_0^\infty(D)$, and consequently $(-\Delta)^\alpha f = \sum_i \langle (-\Delta)^\alpha f , u_i\rangle_{L^2(D)} u_i$, where the series converges in $L^2(D)$. Integration by parts yields moreover that $\langle (-\Delta)^\alpha f , u_i\rangle_{L^2(D)} = \langle  f , (-\Delta)^\alpha u_i\rangle_{L^2(D)}= \lambda_i^\alpha \langle f , u_i\rangle_{L^2(D)}$, from which we deduce that
\begin{equation}
\sum |a_i|^2 \lambda_i^\alpha = \langle (-\Delta)^\alpha f, f\rangle_{L^2(D)} \leq \|(-\Delta)^\alpha f\|_{L^2(D)} \,\cdot  \|f\|_{L^2(D)}<\infty,
\end{equation}
as claimed.

Given \eqref{e:ww}, one can define $\mathcal{H}_0^\alpha(D)$ to be the closure of $C_0^\infty(D)$ with respect to the norm $\|f\|^2_{\mathcal{H}_0^\alpha}(D) := \sum_i |a_i|^2 \lambda^\alpha_i$. The Sobolev space $\mathcal{H}^{-\alpha}(D)$ is then defined as the Hilbert dual of $\mathcal{H}_0^\alpha(D)$, that is, $\mathcal{H}^{-\alpha}(D)$ is the space of continuous linear functionals on $\mathcal{H}_0^\alpha(D)$, endowed with the norm $\|h\|_{\mathcal{H}^{- \alpha}(D) } := \sup_{f :  \|f\|_{\mathcal{H}^{\alpha}_0(D)} \leq 1} |h(f)|$. One has that $L^2(D) \subset \mathcal{H}^{-\alpha}(D)$. Also, the action of $h  = \sum_i a_iu_i\in L^2(D)$ on $f\in \mathcal{H}_0^\alpha(D)$ is given by $h(f) = \int_D h(z) f(z)\, dz$, and moreover $\|h\|^2_{\mathcal{H}^{-\alpha}(D)} = \sum_i \lambda^{-\alpha}_i |a_i|^2$.

\begin{lemma} \label{Lemma:Sobolev_convergence}
Let $D$ be a bounded, simply connected domain.
For every $\alpha >\frac{3}{2}$, the cutoff field $\Phi^0_{D,\varepsilon}$ converges as $\varepsilon \downarrow 0$ in second mean in the Sobolev space $\mathcal{H}^{-\alpha}(D)$, in the sense that there exists a $\mathcal{H}^{-\alpha}(D)$-valued random field $\Phi_D^0$ such that
\begin{equation}
\lim\limits_{\varepsilon \downarrow 0} E\big(\| \Phi^0_{D,\varepsilon} - \Phi_D^0 \|^2_{\mathcal{H}^{-\alpha}(D)}\big) = 0.
\end{equation}
\end{lemma}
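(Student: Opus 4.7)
The plan is to prove convergence by a Cauchy criterion in $L^2(\Omega; \mathcal{H}^{-\alpha}(D))$: for any $0 < \varepsilon_2 < \varepsilon_1$, I will show that $E\|\Phi^0_{D,\varepsilon_2} - \Phi^0_{D,\varepsilon_1}\|^2_{\mathcal{H}^{-\alpha}(D)} \to 0$ as $\varepsilon_1 \downarrow 0$, so that the limit $\Phi_D^0$ exists by completeness of $\mathcal{H}^{-\alpha}(D)$. Write $\Delta \Phi := \Phi^0_{D,\varepsilon_2} - \Phi^0_{D,\varepsilon_1} = \sum_{\mathcal{C}:\, \diam(\mathcal{C}) \in (\varepsilon_2,\varepsilon_1]} \eta_{\mathcal{C}}\, \mu^0_{\mathcal{C}}$. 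For $\varepsilon_1 < \diam(D)$ the boundary cluster (if any, under wired b.c.)\ does not contribute, since its diameter is at least $\diam(D)$; the remaining signs $\eta_{\mathcal{C}}$ are independent and symmetric. Using the eigenfunction expansion of the Sobolev norm, the orthogonality of the $\eta_{\mathcal{C}}$'s, and Tonelli to exchange the sums over the eigenbasis and over the clusters, one gets
\begin{equation} \label{plan:cauchy}
E\|\Delta\Phi\|^2_{\mathcal{H}^{-\alpha}(D)} = \sum_i \lambda_i^{-\alpha}\, E|\Delta\Phi(u_i)|^2 = E\Big[\sum_{\mathcal{C}:\, \diam(\mathcal{C}) \in (\varepsilon_2,\varepsilon_1]} \|\mu^0_{\mathcal{C}}\|^2_{\mathcal{H}^{-\alpha}(D)}\Big].
\end{equation}

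The next step is a per-cluster bound of $\|\mu^0_{\mathcal{C}}\|^2_{\mathcal{H}^{-\alpha}(D)}$ by the total mass $\mu^0_{\mathcal{C}}(\mathbf{1}_D)^2$. The trivial pointwise estimate $|\mu^0_{\mathcal{C}}(u_i)| \leq \mu^0_{\mathcal{C}}(\mathbf{1}_D)\, \|u_i\|_{L^\infty(D)}$ combined with the H\"ormander--Sogge eigenfunction bound $\|u_i\|_{L^\infty(D)} \leq C_D\, \lambda_i^{1/4}$, available in two dimensions, yields
\begin{equation}
\|\mu^0_{\mathcal{C}}\|^2_{\mathcal{H}^{-\alpha}(D)} \leq C_D^{\, 2}\, \mu^0_{\mathcal{C}}(\mathbf{1}_D)^2 \sum_i \lambda_i^{-\alpha + 1/2}.
\end{equation}
By Weyl's law $\lambda_i \sim c_D\, i$ for the Dirichlet Laplacian on a planar domain, the series $\sum_i \lambda_i^{-\alpha + 1/2}$ converges precisely when $\alpha > 3/2$, which is exactly where the stated threshold comes from. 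Inserting this into \eqref{plan:cauchy},
\begin{equation} \label{plan:reduced}
E\|\Delta\Phi\|^2_{\mathcal{H}^{-\alpha}(D)} \leq C\, E\Big[\sum_{\mathcal{C}:\, \diam(\mathcal{C}) \leq \varepsilon_1} \mu^0_{\mathcal{C}}(\mathbf{1}_D)^2\Big].
\end{equation}

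It remains to send $\varepsilon_1 \downarrow 0$ in \eqref{plan:reduced}. By dominated convergence this reduces to the finiteness of $E\sum_{\mathcal{C}} \mu^0_{\mathcal{C}}(\mathbf{1}_D)^2$, which is the continuum counterpart of the lattice identity $\Theta_a^2\, E^{FK}_c[\sum_i |\hat{\mathcal{C}}^a_i|^2] = O(1)$ from \eqref{eq:second-moment}, uniformly bounded in $a$ by Wu's $|x-y|^{-1/4}$ two-point asymptotics and the choice $\Theta_a = a^{15/8}$. I expect this transfer from the lattice to the continuum to be the main obstacle: the weak convergence provided by Theorem \ref{thm:jointconv} alone is not enough to pass a nonlinear functional like $\sum_i \mu^a_i(\mathbf{1}_D)^2$ to the limit, and one needs a uniform integrability statement along $a \downarrow 0$. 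In practice this is supplied by the higher-moment and RSW-type inputs underlying the constructions of \cite{dhn, CCK19}. Once this input is in place, \eqref{plan:reduced} vanishes as $\varepsilon_1 \downarrow 0$, the Cauchy property holds, and $\Phi_D^0 \in L^2(\Omega; \mathcal{H}^{-\alpha}(D))$ is obtained.
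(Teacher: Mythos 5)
Your proposal is correct and follows essentially the same route as the paper: Cauchy criterion in the Banach space of $\mathcal{H}^{-\alpha}(D)$-valued square-integrable random variables, orthogonality of the symmetric signs $\eta_{\mathcal{C}}$, the Grieser $L^\infty$ eigenfunction bound $\|u_i\|_{L^\infty}\le c\lambda_i^{1/4}$, and Weyl's law, which together reduce everything to controlling $E\big[\sum_{\mathcal{C}:\diam(\mathcal{C})\le\varepsilon}\mu^0_{\mathcal{C}}(\mathbf{1}_D)^2\big]$. The only difference is in the final step: where you invoke dominated convergence from the finiteness of the unrestricted sum, the paper records the quantitative rate $C\varepsilon^{7/4}$ (via Wu's $|x-y|^{-1/4}$ two-point asymptotics), and the lattice-to-continuum uniform-integrability gap you correctly flag is filled by appealing to the argument of Proposition 2.1 of \cite{cn09} and Theorem 7 of \cite{CCK19}.
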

\begin{proof}
We are going to show that $\Phi^0_{D,\varepsilon}$ is a Cauchy sequence in the Banach space of $\mathcal{H}^{-\alpha}(D)$-valued square-integrable random variables with norm $\sqrt{E\big(\| \cdot \|^2_{{\mathcal H}^{-\alpha}(D)}\big)}$ (see, e.g., \cite{LT91}).
The conclusion of the lemma then follows from the completeness of Banach spaces.

We will use the fact that $\Phi^0_{D,\varepsilon}$ is a.s.\ in $L^2(D)$ for every $\varepsilon>0$ (which follows from Theorem 2 of \cite{CCK19} and Proposition 2.2 of \cite{cn09}) together with the uniform bound $\|u_i\|_{L^\infty(D)}\leq c\lambda_i^{\frac{1}{4}}$ (see Theorem~1 of \cite{grieser}), Weyl's law \cite{weyl} (which says that the number of eigenvalues $\lambda_i$ that are less than $\ell$ is proportional to $\ell$ with an error of $o(\ell)$), and the argument in the proof of Proposition 2.1 of \cite{cn09} (see also the argument leading to equation~(10) in the proof of Theorem~7 of \cite{CCK19}).

With these ingredients we have that, $\forall \varepsilon>\varepsilon'>0$ and some $C,C'<\infty$,
\begin{eqnarray}
	E(\| \Phi^0_{D,\varepsilon} - \Phi^0_{D,\varepsilon'} \|^2_{\mathcal{H}^{-\alpha}(D)}) & = & \sum_i \frac{1}{\lambda_i^{\alpha}} E\Big[\big( (\Phi^0_{D,\varepsilon} - \Phi^0_{D,\varepsilon'})(u_i) \big)^2\Big] \\
	& \leq & \sum_i \frac{\| u_i \|^2_{L^{\infty}(D)}}{\lambda_i^{\alpha}} E\Big[\Big( \sum_{\mathcal{C}\in\mathscr{C}_D:\varepsilon'<\diam({\mathcal C})\leq\varepsilon} \eta_{\mathcal{C}} \mu^0_{\mathcal{C}}(\mathbf{I}_D) \Big)^2\Big] \\
	& \leq & \sum_i \frac{\| u_i \|^2_{L^{\infty}(D)}}{\lambda_i^{\alpha}} E\Big[ \sum_{\mathcal{C}\in\mathscr{C}_D:\diam({\mathcal C})\leq\varepsilon} \big(\mu^0_{\mathcal{C}}(\mathbf{I}_D) \big)^2 \Big] \\
	&\leq&\sum_i \frac{\| u_i \|^2_{L^{\infty}(D)}}{\lambda_i^{\alpha}}\lim_{a\downarrow0}a^{15/4}\sum_{x,y\in{D^a:|x-y|\leq\epsilon}}\langle\sigma_x\sigma_y\rangle_c\\
	& \leq & \sum_i \frac{\| u_i \|^2_{L^{\infty}(D)}}{\lambda_i^{\alpha}} C \varepsilon^{7/4} \\
	& \leq & C' \varepsilon^{7/4} \sum_i \frac{1}{\lambda_i^{\alpha-1/2}}. \label{L2-bound}
\end{eqnarray}
For any $\alpha>3/2$ the last expression is finite because of Weyl's law mentioned above, and it tends to zero as $\varepsilon \downarrow 0$, proving the claim.
\end{proof}

Lemma \ref{Lemma:Sobolev_convergence} shows that it makes sense to define the continuum magnetization field in a bounded domain $D$ as an element of ${\mathcal H}^{-\alpha}(D)$:
\begin{equation} \label{def:cont-field}
\Phi^0_D = \sum_{\mathcal{C}\in\mathscr{C}_D} \eta_{\mathcal{C}} \mu_{\mathcal{C}}
\end{equation}
where the right hand side is understood as the limit of $\Phi^0_{D,\varepsilon}$ as $\varepsilon \downarrow 0$.


The following theorem shows that the continuum magnetization field defined by \eqref{def:cont-field} is the scaling limit of the lattice magnetization field.
We note that Furlan and Mourrat \cite{FM17} have proved that, for any $\epsilon>0$, the lattice magnetization field is tight in Besov spaces of index $-1/8-\epsilon$ but not $-1/8+\epsilon$.
We remark that the scaling limit of the lattice energy field is rather different --- see Theorem 1.1 and Remarks 1.2 and 1.4 of \cite{Jia20}.
\begin{theorem} \label{Thm:Sobolev_convergence}
Consider an Ising model at the critical point ($\beta=\beta_c$ and $H=0$) with free or plus boundary condition on $D^a = D \cap a{\mathbb Z}^2$ where $D$ is a bounded, simply connected domain of ${\mathbb R}^2$. Then the lattice magnetization
\begin{equation}
\Phi^{a,0}_D := a^{15/8} \sum_{x\in D^a}\sigma_x\delta_x
\end{equation}
converges in distribution to $\Phi_D^0$ as $a \downarrow 0$. The convergence is in any Sobolev space $\mathcal{H}^{-\alpha}(D)$ with $\alpha >\frac{3}{2}$ in the topology induced by the norm $\| \cdot \|_{\mathcal{H}^{-\alpha}(D)}$.
\end{theorem}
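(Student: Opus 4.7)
The plan is a standard cutoff/approximation argument: decompose the field by cluster diameter, handle each cutoff field via the joint convergence of clusters and measures of Theorem~\ref{thm:jointconv}, and then let the cutoff vanish while controlling the tail uniformly in the lattice spacing $a$. Using the FK representation of Section~\ref{sec:bounded}, I write $\Phi^{a,0}_D \stackrel{d}{=} \sum_i \eta^a_i \mu^a_i$, with the $\eta^a_i$ i.i.d.\ symmetric $\pm 1$ (and $\eta^a_i \equiv +1$ on the boundary cluster, if any, in the plus-b.c.\ case), and introduce the cutoff lattice field
\[
\Phi^{a,0}_{D,\varepsilon} := \sum_{i:\,\diam(\mathcal{C}^a_i) > \varepsilon} \eta^a_i \mu^a_i,
\]
the natural lattice analog of $\Phi^0_{D,\varepsilon}$ in \eqref{eq:cutoff-cont-field}.

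For each fixed $\varepsilon>0$, I would show that $\Phi^{a,0}_{D,\varepsilon} \to \Phi^0_{D,\varepsilon}$ in distribution in $\mathcal{H}^{-\alpha}(D)$ as $a\downarrow 0$. Only finitely many clusters have diameter exceeding $\varepsilon$ (Proposition~2.2 of \cite{cn09}), so Skorokhod's representation theorem applied to the joint convergence of $(\mathscr{C}^a_D,\mathscr{M}^a_D)$ in Theorem~\ref{thm:jointconv} yields an a.s.\ coupling under which the $>\!\varepsilon$ clusters and their measures converge pairwise; this coupling is enlarged to pair the $\eta$-labels consistently (boundary clusters, identified by their touching $\partial D$, receive the deterministic $+1$). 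Since each Dirichlet eigenfunction $u_j$ of $-\Delta$ is continuous on $\bar D$, weak convergence of each $\mu^a_i$ to the corresponding $\mu^0_{\mathcal{C}}$ gives $\mu^a_i(u_j) \to \mu^0_{\mathcal{C}}(u_j)$ almost surely, whence $\Phi^{a,0}_{D,\varepsilon}(u_j) \to \Phi^0_{D,\varepsilon}(u_j)$ for every $j$. Combined with the uniform high-frequency tail bound established in the next paragraph, this coordinate-wise convergence upgrades to convergence in the full $\mathcal{H}^{-\alpha}(D)$-norm.

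The core technical estimate is the uniform cutoff bound
\[
\sup_{a>0} E\bigl(\|\Phi^{a,0}_D - \Phi^{a,0}_{D,\varepsilon}\|^2_{\mathcal{H}^{-\alpha}(D)}\bigr) \leq C\, \varepsilon^{7/4},
\]
which I derive by mimicking the chain of inequalities in the proof of Lemma~\ref{Lemma:Sobolev_convergence} at the lattice level. Independence and symmetry of the $\eta^a_i$ give
\[
E\bigl((\Phi^{a,0}_D - \Phi^{a,0}_{D,\varepsilon})(u_j)^2\bigr) \leq \|u_j\|_{L^\infty(D)}^2\, E\Bigl[\sum_{i:\,\diam(\mathcal{C}^a_i) \leq \varepsilon}\bigl(\mu^a_i(\mathbf{I}_D)\bigr)^2\Bigr],
\]
and by the FK representation of the two-point function together with Wu's asymptotic $\langle\sigma_x\sigma_y\rangle_c \asymp |x-y|^{-1/4}$, the inner expectation is bounded by $a^{15/4}\sum_{x,y\in D^a,\,|x-y|\leq\varepsilon}\langle\sigma_x\sigma_y\rangle_c \leq C\varepsilon^{7/4}$ uniformly in $a$. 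Weighting by $\lambda_j^{-\alpha}$, invoking Grieser's bound $\|u_j\|_{L^\infty(D)} \leq c\lambda_j^{1/4}$, and summing via Weyl's law yields the claim for every $\alpha>3/2$.

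To finish, Lemma~\ref{Lemma:Sobolev_convergence} already supplies $\Phi^0_{D,\varepsilon} \to \Phi^0_D$ in second mean, hence in distribution. Together with the two ingredients above and the triangle inequality in $\mathcal{H}^{-\alpha}(D)$, this places us inside the hypotheses of the standard ``convergence-by-approximation'' criterion (e.g.\ Billingsley, Theorem~3.2), which delivers $\Phi^{a,0}_D \to \Phi^0_D$ in distribution in $\mathcal{H}^{-\alpha}(D)$. The main obstacle is the first step: promoting the coordinate-wise (in $j$) convergence of the finite cutoff fields $\Phi^{a,0}_{D,\varepsilon}$ to convergence in the full Sobolev norm. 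The uniform cutoff estimate of the third paragraph resolves this, since it applies equally between any two cutoff scales $\varepsilon'<\varepsilon$ and thus controls the $a$-uniform contribution of the high-$j$ tail.
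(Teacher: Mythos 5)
Your proof is correct in outline and uses the same core estimates as the paper (Grieser's $L^\infty$ eigenfunction bound, Weyl's law, the FK two-point function bound $a^{15/4}\sum_{|x-y|\le\varepsilon}\langle\sigma_x\sigma_y\rangle_c\lesssim\varepsilon^{7/4}$), but the logical scaffolding is genuinely different. The paper proceeds by compactness: it establishes tightness of $\Phi^{a,0}_D$ in $\mathcal{H}^{-\alpha}(D)$ via a uniform bound on $E(\|\Phi^{a,0}_D\|^2_{\mathcal{H}^{-\alpha'}(D)})$ for some $\alpha'<\alpha$ together with Rellich's compact embedding $\mathcal{H}^{-\alpha'}\hookrightarrow\mathcal{H}^{-\alpha}$, then shows every subsequential limit coincides with $\Phi^0_D$ by the same diameter-cutoff decomposition you use, invoking Lemma A.4 of \cite{CGPR} to pass from convergence of the cutoff fields to norm convergence. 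You instead apply Billingsley's second-moment approximation criterion directly: prove (a) $\Phi^{a,0}_{D,\varepsilon}\Rightarrow\Phi^0_{D,\varepsilon}$ for each fixed $\varepsilon$, (b) $\Phi^0_{D,\varepsilon}\Rightarrow\Phi^0_D$ (which is Lemma~\ref{Lemma:Sobolev_convergence}), and (c) $\lim_\varepsilon\limsup_a P(\|\Phi^{a,0}_D-\Phi^{a,0}_{D,\varepsilon}\|_{\mathcal{H}^{-\alpha}}>\delta)=0$, and thereby bypass the explicit tightness step and the appeal to Rellich. Both routes are valid; the paper's compactness argument is the more standard device for Polish-space-valued random variables, while yours is slightly more self-contained.

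One imprecision worth flagging: in step (a) you assert that the coordinate-wise a.s.\ convergence $\Phi^{a,0}_{D,\varepsilon}(u_j)\to\Phi^0_{D,\varepsilon}(u_j)$ upgrades to convergence in the full $\mathcal{H}^{-\alpha}(D)$-norm ``combined with the uniform high-frequency tail bound established in the next paragraph,'' and later you attribute this to the diameter-cutoff estimate ``applied between two cutoff scales.'' That estimate controls the contribution of clusters with diameter between $\varepsilon'$ and $\varepsilon$, not the high-$j$ part of the eigenfunction expansion of the \emph{fixed} cutoff field $\Phi^{a,0}_{D,\varepsilon}$. What you actually need for (a) is the $a$-uniform bound
\[
\sup_a E\Big[\sum_{j>J}\lambda_j^{-\alpha}\bigl(\Phi^{a,0}_{D,\varepsilon}(u_j)\bigr)^2\Big]\le\sup_a\sum_{j>J}\frac{\|u_j\|^2_{L^\infty(D)}}{\lambda_j^\alpha}\,E\Big[\sum_i\bigl(\mu^a_i(\mathbf I_D)\bigr)^2\Big]\le C\sum_{j>J}\lambda_j^{-(\alpha-1/2)}\longrightarrow 0
\]
as $J\to\infty$, which is proved by the same Grieser/Weyl chain you already use but is a separate bound from the diameter-cutoff estimate. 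With that bound stated explicitly, your argument is complete.
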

\begin{proof}
To simplify the notation, in this proof we write $\Phi^a_D$ for $\Phi^{a,0}_D$.
We first show that $\Phi^a_D$ has subsequential limits in distribution in the topology induced by $\|\cdot\|_{{\mathcal H}^{-\alpha}(D)}$, for any $\alpha>3/2$.
We will make use of Rellich's theorem, which implies that ${\mathcal H}^{-\alpha_1}(D)$ is compactly embedded in ${\mathcal H}^{-\alpha_2}(D)$ for any $\alpha_1<\alpha_2$ and thus, in particular, that the closure of a ball of finite radius in ${\mathcal H}^{-\alpha_1}(D)$ is compact in ${\mathcal H}^{-\alpha_2}(D)$.

Given $\alpha>3/2$, let $\epsilon=\alpha-3/2>0$ and $\alpha'=3/2+\epsilon/2<\alpha$. A straightforward calculation, using some of the ingredients of the proof of Lemma~\ref{Lemma:Sobolev_convergence}, shows that
\begin{eqnarray}
\limsup_{a \downarrow 0} E\Big( \| \Phi^a_D \|^2_{{\mathcal H}^{-\alpha'}(D)} \Big) & = & \limsup_{a \downarrow 0} \sum_i \frac{1}{\lambda_i^{\alpha'}} E\Big(\big( \Phi^a_D(u_i) \big)^2 \Big) \\
& \leq & \Big(\sum_i \frac{\| u_i \|^2_{L^{\infty}(D)}}{\lambda_i^{\alpha'}}\Big) \limsup_{a \downarrow 0} E\Big( \big(\Phi^a_D(\mathbf{I}_D)\big)^2 \Big) \\
& \leq & c^2 \Big(\sum_i \frac{1}{\lambda_i^{1+\epsilon/2}}\Big) \limsup_{a \downarrow 0} E\Big( \big(\Phi^a_D(\mathbf{I}_D)\big)^2 \Big), \label{second-moment-bound}
\end{eqnarray}
where the last inequality follows from the bound in Theorem~1 of \cite{grieser} and the last expression is finite because of Weyl's law \cite{weyl} and the boundedness of the second moment of the rescaled magnetization (see the discussion leading to \eqref{eq:second-moment}).
This calculation, combined with Chebyshev's inequality and Rellich's theorem, implies that $\Phi^a_D$ is tight in ${\mathcal H}^{-\alpha}(D)$ for any $\alpha>3/2$.

Next we are going to show that all subsequential limits coincide with the field $\Phi^0_D$ defined by \eqref{def:cont-field}.
Let $\tilde{\Phi}^0_D$ denote any such subsequential limit obtained from a converging sequence $\Phi^{a_k}_D$. For any $\varepsilon>0$, using the Edwards-Sokal coupling, we can write
\begin{equation}
\Phi^{a_k}_D = \Phi^{a_k}_{D,\varepsilon} + \sum_{i:\diam({\mathcal C}_i)\leq\varepsilon} \eta_i \mu^{a_k}_i,
\end{equation}
where $\Phi^{a_k}_{D,\varepsilon} := \sum_{i: \diam(\mathcal{C}^a_i)>\varepsilon} \eta_i \mu^{a_k}_i$.
Since $E\Big( \| \Phi^a_{D,\varepsilon} \|^2_{{\mathcal H}^{-\alpha'}(D)} \Big) \leq E\Big( \| \Phi^a_D \|^2_{{\mathcal H}^{-\alpha'}(D)} \Big)$, the argument above shows that $\Phi^a_{D,\varepsilon}$ is tight in ${\mathcal H}^{-\alpha}(D)$ as $a \downarrow 0$ for any $\alpha>3/2$.
Proposition 2.2 of \cite{cn09} implies that, for any $f \in C^{\infty}_0(D)$, the number of elements in the sum defining $\Phi^{a}_{D,\varepsilon}(f)$ remains finite as $a \downarrow 0$; hence for every $f \in C^{\infty}_0(D)$, by Theorem \ref{thm:jointconv}, $\Phi^{a}_{D,\varepsilon}(f)$ converges in distribution to $\Phi^0_{D,\varepsilon}(f)$ as $a \downarrow 0$.
This, combined with the tightness of $\Phi^a_{D,\varepsilon}$ and with Lemma A.4 of \cite{CGPR}, shows that, as $k \to \infty$,
$\Phi^{a_k}_{D,\varepsilon}$ converges to $\Phi^{0}_{D,\varepsilon}$ in distribution in the topology induced by $\| \cdot \|_{{\mathcal H}^{-\alpha}(D)}$.
Therefore, $\sum_{i:\diam({\mathcal C}_i)\leq\varepsilon} \eta_i \mu^{a_k}_i$ also converges in distribution in the topology induced by $\| \cdot \|_{{\mathcal H}^{-\alpha}(D)}$
to some $\tilde{X}_{\varepsilon} \in {\mathcal H}^{-\alpha}(D)$, and $\Phi^{a_k}_D$ converges to $\tilde{\Phi}^0_D = \Phi^{0}_{D,\varepsilon}+\tilde{X}_{\varepsilon}$.
Moreover, a calculation analogous to that carried out in the proof of Lemma \ref{Lemma:Sobolev_convergence} shows that
\begin{eqnarray}
E(\| \tilde{X}_{\varepsilon} \|^2_{\mathcal{H}^{-\alpha}(D)}) & \leq & \limsup_{a_k \downarrow 0} E_{\beta_c,0}\Big(\Big\| \sum_{i:\diam({\mathcal C}_i)\leq\varepsilon} \eta_i \mu^{a_k}_i({\mathbf I}_{D}) \Big\|^2_{\mathcal{H}^{-\alpha}(D)}\Big) \\
& \leq & \sum_i \frac{\| u_i \|^2_{L^{\infty}(D)}}{\lambda_i^{\alpha}}  \limsup_{a_k \downarrow 0} E^{FK}_{\beta_c,0}\Big[ \sum_{i:\diam({\mathcal C}_i)\leq\varepsilon} \big(\mu^{a_k}_i({\mathbf I}_{D}) \big)^2 \Big] \label{small-diameter} \\
& \leq & C' \varepsilon^{7/4} \sum_i \frac{1}{\lambda_i^{\alpha-1/2}} \rightarrow 0 \text{ as } \varepsilon \to 0. \label{zero-limit}
\end{eqnarray}

We note that the processes $\Phi^{0}_D$, $\Phi^{0}_{D,\varepsilon}$, $\tilde{\Phi}^{0}_{D}$, and $\tilde{X}_{\varepsilon}$ are all measurable
with respect to CLE$_{16/3}$ in $D$ and have a joint distribution on the space of conformal loop and measure ensembles.
Hence, using the triangle inequality and Jensen's inequality, we have that
\begin{eqnarray}
E(\| \Phi^{0}_D - \tilde{\Phi}^{0}_{D} \|_{\mathcal{H}^{-\alpha}(D)}) & \leq &
E(\| \Phi^{0}_D - \Phi^{0}_{D,\varepsilon} \|_{\mathcal{H}^{-\alpha}(D)}) +
E(\| \tilde{X}_{\varepsilon} \|_{\mathcal{H}^{-\alpha}(D)}) \\
& \leq & E(\| \Phi^{0}_D - \Phi^{0}_{D,\varepsilon} \|_{\mathcal{H}^{-\alpha}(D)}) + \Big(C' \varepsilon^{7/4} \sum_i \frac{1}{\lambda_i^{\alpha-1/2}}\Big)^{1/2}. \label{bound-mean}
\end{eqnarray}

Markov's inequality now implies that, for any $\delta>0$,
$P\big( \| \Phi^0_D - \tilde{\Phi}^0_D \|_{\mathcal{H}^{-\alpha}(D)} > \delta \big)$ can be made arbitrarily small using the bound \eqref{bound-mean} together with \eqref{zero-limit} and Lemma \ref{Lemma:Sobolev_convergence}, and taking $\varepsilon$ small.
This concludes the proof of uniqueness of subsequential limits and of the theorem.
\end{proof}

Theorem \ref{Thm:Sobolev_convergence} and equation \eqref{eq:cont-field} provide a sort of continuum FK representation for the magnetization field.
A related result was recently proved by Miller, Sheffield and Werner~\cite{MSW17}. Theorem 7.5 of~\cite{MSW17} shows that forming clusters of CLE$_{16/3}$ loops
by a percolation process with parameter $p=1/2$ generates CLE$_{3}$, the Conformal Loop Ensemble with parameter $3$. CLE$_3$ describes the full scaling limit of
Ising spin-cluster boundaries \cite{BH19} while CLE$_{16/3}$, as already mentioned, describes the full scaling limit of FK-Ising cluster boundaries~\cite{KS19}.
We note that, although the magnetization can obviously be expressed using Ising spin clusters, as a sum of their signed areas, such a
representation does not appear to be useful in the scaling limit because the area measures of spin clusters scale with exponent $187/96$ by \cite{SSW09} while the magnetization scales with exponent $15/8$. 
The usefulness of the representation in terms of FK clusters is due to the fact that both the FK clusters and the magnetization need to be
multiplied by the same scale factor in the scaling limit to obtain meaningful nontrivial limits. That is not the case for the magnetization and the spin clusters.

To conclude this section we note that, using Theorem \ref{thm:full-plane}, it is possible to consider a full-plane version of the magnetization, which one can denote
\begin{equation} \label{def:full-plane-field}
\Phi^0 = \sum_{\mathcal{C}\in\mathscr{C}} \eta_{\mathcal{C}} \mu^0_{\mathcal{C}}
\end{equation}
where $\mathscr{C}$ denotes the full-plane collection of continuum clusters.
To be more precise, consider the magnetization field $\Phi^0_n := \Phi^0_{[-n,n]^2}$. Given $f \in C^{\infty}_0$, let $A_{f,k}$ denote the event that no continuum cluster intersects both the support of $f$ and the complement of $[-k,k]^2$.
Using Theorem \ref{thm:full-plane}, for any $k<n$, we can write the distribution function of $\Phi^0_n(f)$ as
\begin{align}
& {\mathbb P}_n(\Phi^0_n(f) \leq x) \nonumber \\
& \quad = {\mathbb P}_n(\Phi^0_n(f) \leq x \text{ and } A_{f,k}) + {\mathbb P}_n(\Phi^0_n(f) \leq x \text{ and } A_{f,k}^c) \\
& \quad = {\mathbb P}_n \big|_{[-k,k]^2}\Big(\sum_{\mathcal{C}\in\mathscr{C}:\mathcal{C}\cap\text{supp}(f)\neq\emptyset} \eta_{\mathcal{C}} \mu^0_{\mathcal{C}}(f) \leq x \text{ and } A_{f,k}\Big) \\
& \qquad \qquad + {\mathbb P}_n(\Phi^0_n(f) \leq x \text{ and } A_{f,k}^c). \nonumber
\end{align}
The fact that there is no infinite cluster in critical FK-Ising percolation implies that one can make the term
${\mathbb P}_n(\Phi^0_n(f) \leq x \text{ and } A_{f,k}^c)$ arbitrarily small by taking $n$ and $k$ sufficiently large.
Theorem \ref{thm:full-plane} implies that ${\mathbb P}_n \big|_{[-k,k]^2}\Big(\sum_{\mathcal{C}\in\mathscr{C}:\mathcal{C}\cap\text{supp}(f)\neq\emptyset} \eta_{\mathcal{C}} \mu^0_{\mathcal{C}}(f) \leq x \text{ and } A_{f,k}\Big)$ converges to ${\mathbb P} \big|_{[-k,k]^2}\Big(\sum_{\mathcal{C}\in\mathscr{C}:\mathcal{C}\cap\text{supp}(f)\neq\emptyset} \eta_{\mathcal{C}} \mu^0_{\mathcal{C}}(f) \leq x \text{ and } A_{f,k}\Big)$ as $n \uparrow \infty$. This last probability converges to ${\mathbb P}\Big(\sum_{\mathcal{C}\in\mathscr{C}:\mathcal{C}\cap\text{supp}(f)\neq\emptyset} \eta_{\mathcal{C}} \mu^0_{\mathcal{C}}(f) \leq x\Big) =: {\mathbb P}(\Phi^0(f) \leq x)$ as $k \uparrow \infty$.
This shows that $\lim_{n \uparrow \infty} {\mathbb P}_n(\Phi^0_n(f) \leq x) = {\mathbb P}(\Phi^0(f) \leq x)$.

Theorem 4 of \cite{CCK19} implies that the full-plane magnetization \eqref{def:full-plane-field} is conformally covariant. In particular, one has that
\begin{equation} \label{eq:conf-cov}
\Phi({\bf I}_{[-\alpha L,\alpha L]^2}) \stackrel{dist}{=} \alpha^{15/8} \Phi({\bf I}_{[-L,L]^2})
\end{equation}
where $\stackrel{dist}{=}$ denotes equality in distribution.
An equivalent way to express the conformal covariance of the magnetization field is presented in the next theorem, where, with a slight abuse of notation, we write $\Phi^0(x)$ even though $\Phi^0$ is not defined pointwise.
\begin{theorem}[Theorem 4.1 of \cite{cjn20}] \label{thm:conf-cov}
	For any $\lambda>0$, the field $\Phi^0_{\lambda}(x)=\Phi^0(\lambda x)$ given by
	\begin{eqnarray} \label{eq:conf-cov1}
	\Phi^0_{\lambda}(f) & = & \int_{{\mathbb R}^2} \Phi^0(\lambda x) f(x) dx \\
	& = & \int_{{\mathbb R}^2} \Phi^0(y) f(\lambda^{-1} y) \lambda^{-2} dy = \lambda^{-2} \Phi^0(f_{\lambda^{-1}}),
	\end{eqnarray}
	with $f_{\lambda^{-1}}(x) = f(\lambda^{-1}x)$ is equal in distribution to $\lambda^{-1/8}\Phi^0(x)$.
\end{theorem}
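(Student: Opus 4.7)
The plan is to test both sides against $f \in C^\infty_0(\mathbb{R}^2)$ and rely on the full-plane FK-representation \eqref{def:full-plane-field} together with the conformal covariance of the full-plane CME$_{16/3}$ recorded in Remark \ref{rem:cme} (Theorem 4 of \cite{CCK19}). Since $\Phi^0_{\lambda}(f) = \lambda^{-2}\Phi^0(f_{\lambda^{-1}})$ already holds by \eqref{eq:conf-cov1} and $-2+15/8 = -1/8$, the theorem reduces to showing $\Phi^0(f_{\lambda^{-1}}) \stackrel{d}{=} \lambda^{15/8}\Phi^0(f)$ (jointly for several test functions). The origin of the exponent $15/8$ is transparent at the lattice level: since $\mu^a_{\mathcal C} = a^{15/8}\sum_{x \in \mathcal{C}^a}\delta_x$, rescaling the lattice by $\lambda^{-1}$ multiplies the prefactor by $\lambda^{-15/8}$, so in the scaling limit the CME obeys the dilation rule
\[
\bigl\{(\lambda^{-1}{\mathcal C},\; \lambda^{-15/8}(T_{\lambda^{-1}})_*\mu^0_{\mathcal C})\bigr\}_{\mathcal{C}\in\mathscr C} \stackrel{d}{=} \bigl\{({\mathcal C}',\; \mu^0_{{\mathcal C}'})\bigr\}_{{\mathcal C}'\in\mathscr C},
\]
where $T_{\lambda^{-1}}(z) = \lambda^{-1}z$ and $(T_{\lambda^{-1}})_*$ denotes push-forward.

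I would carry out the computation first at the $\varepsilon$-cutoff level. For $f$ of bounded support, Proposition 2.2 of \cite{cn09} guarantees that $\Phi^0_\varepsilon(g) := \sum_{{\mathcal C}:\diam({\mathcal C})>\varepsilon}\eta_{\mathcal C}\mu^0_{\mathcal C}(g)$ is almost surely a finite sum. A direct change of variable gives $\mu^0_{\mathcal C}(f_{\lambda^{-1}}) = (T_{\lambda^{-1}})_*\mu^0_{\mathcal C}(f)$, so pulling out $\lambda^{15/8}$ and reindexing by ${\mathcal C}' = \lambda^{-1}{\mathcal C}$ (note $\diam({\mathcal C})>\varepsilon \iff \diam({\mathcal C}')>\lambda^{-1}\varepsilon$) yields
\[
\Phi^0_\varepsilon(f_{\lambda^{-1}}) \;=\; \lambda^{15/8}\sum_{{\mathcal C}':\diam({\mathcal C}')>\lambda^{-1}\varepsilon}\eta_{T_\lambda {\mathcal C}'}\,\bigl[\lambda^{-15/8}(T_{\lambda^{-1}})_*\mu^0_{T_\lambda {\mathcal C}'}\bigr](f).
\]
Applying the CME dilation rule above, together with the fact that the signs $\{\eta_{\mathcal C}\}$ are i.i.d.\ symmetric and therefore invariant under any measurable relabeling of the clusters, one deduces
\[
\Phi^0_\varepsilon(f_{\lambda^{-1}}) \stackrel{d}{=} \lambda^{15/8}\,\Phi^0_{\lambda^{-1}\varepsilon}(f).
\]

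To finish I would pass to the limit $\varepsilon \downarrow 0$: on any bounded domain $D$ containing the supports of $f$ and $f_{\lambda^{-1}}$, Lemma~\ref{Lemma:Sobolev_convergence} provides $L^2$-convergence in $\mathcal{H}^{-\alpha}(D)$ for $\alpha>3/2$, hence $\Phi^0_\varepsilon(f)\to\Phi^0(f)$ and $\Phi^0_\varepsilon(f_{\lambda^{-1}})\to\Phi^0(f_{\lambda^{-1}})$ in $L^2$; since $\lambda$ is fixed, $\lambda^{-1}\varepsilon\downarrow 0$ as well, so the right-hand side converges to $\lambda^{15/8}\Phi^0(f)$. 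The distributional equality therefore survives in the limit, giving $\Phi^0(f_{\lambda^{-1}}) \stackrel{d}{=} \lambda^{15/8}\Phi^0(f)$, and running the same computation simultaneously for $(f_1,\dots,f_n)\in C^\infty_0$ upgrades the conclusion to equality of finite-dimensional distributions. The one genuine technical point is the compatibility of the $\varepsilon$-cutoff with the dilation; this is handled by the observation that the cutoff threshold rescales correctly to $\lambda^{-1}\varepsilon$, so that both sides of the cutoff identity converge to their expected limits along the same family of cutoffs. Everything else is book-keeping with the change-of-variable formula.
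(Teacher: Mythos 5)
The paper itself gives no proof of this theorem: it cites Theorem 4.1 of \cite{cjn20} and notes that it is a special case of Theorem 1.8 of \cite{cgn15}, whose original argument goes through the lattice convergence and uniqueness of the scaling limit rather than through the CME representation. Your argument is thus genuinely different from the cited source, and it is exactly the kind of proof the CME framework of this paper is meant to facilitate: the reduction to $\Phi^0(f_{\lambda^{-1}}) \stackrel{d}{=} \lambda^{15/8}\Phi^0(f)$, the identification of the CME dilation rule with exponent $15/8$ via Remark~\ref{rem:cme}, the reindexing ${\mathcal C} \mapsto \lambda^{-1}{\mathcal C}$ with the corresponding rescaling of the diameter cutoff to $\lambda^{-1}\varepsilon$, and the exchangeability of the signs $\{\eta_{\mathcal C}\}$ conditional on the CME are all correct, and the exponent bookkeeping $-2+15/8=-1/8$ is right.

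The one step that does not quite go through as written is the passage $\varepsilon \downarrow 0$. You invoke Lemma~\ref{Lemma:Sobolev_convergence}, but that lemma concerns the \emph{bounded-domain} cutoff field $\Phi^0_{D,\varepsilon}$ built from $\mathscr{C}_D$ (clusters of FK percolation \emph{in} $D$ with free or wired boundary conditions), whereas your computation is carried out with the \emph{full-plane} cutoff $\Phi^0_\varepsilon(f) = \sum_{{\mathcal C} \in \mathscr{C}:\,\diam({\mathcal C})>\varepsilon} \eta_{\mathcal C}\mu^0_{\mathcal C}(f)$, whose clusters may extend arbitrarily far beyond $\text{supp}(f)$. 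These are not the same object, and in fact the paper defines the full-plane $\Phi^0$ at the end of Section~\ref{sec:m-field} as a limit of the bounded-domain fields $\Phi^0_n = \Phi^0_{[-n,n]^2}$, not as a limit of its own $\varepsilon$-cutoffs. So you would need either (i) a full-plane analogue of Lemma~\ref{Lemma:Sobolev_convergence} establishing that $\Phi^0_\varepsilon(f) \to \Phi^0(f)$ in $L^2$ --- which is plausible because the second-moment bound driving Lemma~\ref{Lemma:Sobolev_convergence} is itself a full-plane two-point-function estimate, but the lemma does not state it --- or (ii) a recasting of the dilation computation through the $\Phi^0_n$'s and the limit of Theorem~\ref{thm:full-plane}, at the cost of tracking how dilation by $\lambda^{-1}$ maps $[-n,n]^2$ to $[-\lambda^{-1}n,\lambda^{-1}n]^2$. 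Apart from that gap, the proof is sound and arguably cleaner than the original lattice-based route, since it makes the origin of the exponent $1/8$ manifest as $2 - 15/8$.
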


Theorem \ref{thm:conf-cov} is a special case of Theorem 1.8 of \cite{cgn15}, which shows that the distribution of the magnetization field transforms covariantly under any conformal map between any two simply connected domains. For more details, we refer the reader to Theorem 1.8 and Corollary 1.9 of \cite{cgn15}, as well as Section 4.2 of \cite{cjn20}.

\section{The near-critical scaling limit} \label{sec:near-critical-sl}

CME$_{16/3}$ describes the continuum scaling limit of the rescaled counting measures associated with \emph{critical} FK-Ising clusters.
As we have seen in the previous section, CME$_{16/3}$ provides, via equation \eqref{eq:cont-field}, a sort of continuum FK representation for the \emph{critical} magnetization field.
Surprisingly, CME$_{16/3}$, also plays a crucial role in the proof of exponential decay for the \emph{near-critical} magnetization field in \cite{cjn20}.
The tool that allows the use of CME$_{16/3}$ in the analysis of the near-critical scaling limit is the coupling presented in Corollary~\ref{new-coupling}.
%

Let $P_0$ denote the law of CME$_{16/3}$ in a bounded, simply connected domain $D$, and $E_0$ denote expectation with respect to $P_0$.
Equation \eqref{FK_external_field}, combined with Theorem~\ref{thm:jointconv} and with Corollary~3.8 of \cite{cgn15}, shows that in the near-critical regime, $\beta=\beta_c$ and $H=ha^{15/8}$, as $a \downarrow 0$, the distribution of the collection of FK clusters in $D$ has a limit $P_h$ defined by the Radon-Nikodym derivative
\begin{align} \label{RD-derivative}
\frac{dP_h}{dP_0} =& \frac{ \exp(\beta_ch \mu^0_{\mathcal{C}_b}({\mathbf I}_D))\Pi_{\mathcal{C}\in\mathscr{C}_D:\mathcal{C}\neq\mathcal{C}_b} \cosh(\beta_c h \mu^0_{\mathcal C}({\mathbf I}_D))}{E_0( \exp(\beta_ch \mu^0_{\mathcal{C}_b}({\mathbf I}_D)) \Pi_{\mathcal{C}\in\mathscr{C}_D:\mathcal{C}\neq\mathcal{C}_b} \cosh(\beta_c h \mu^0_{\mathcal C}({\mathbf I}_{D})))}\\
=& \frac{ \exp(\beta_ch \mu^0_{\mathcal{C}_b}({\mathbf I}_D)) \Pi_{\mathcal{C}\in\mathscr{C}_D:\mathcal{C}\neq\mathcal{C}_b} \cosh(\beta_c h \mu^0_{\mathcal C}({\mathbf I}_D))}{E(e^{\beta_c h M^0_D})},
\end{align}
where $M^0_D := \Phi^0_D({\mathbf I}_D)$ and we have used a near-critical scaling limit version of \eqref{pf-field} in the last equality.
We let $\{\mu^h_{\mathcal C}\}_{\mathcal{C}\in\mathscr{C}_D}$ denote a collection of measures distributed according to $P_h$; they are the scaling limit of the rescaled counting measures of the FK clusters in $D$ with free or wired boundary conditions in the near-critical regime.

In analogy with \eqref{def:cont-field}, we would like to define a near-critical magnetization field
\begin{equation} \label{eq:near-critical-field}
\Phi^h_D = \sum_{\mathcal{C}\in\mathscr{C}_D} \eta^h_{\mathcal{C}} \mu^h_{\mathcal{C}},
\end{equation}
where $\{\eta^h_{\mathcal C}\}_{\mathcal{C}\in\mathscr{C}_D}$ is a collection of independent random variables
such that $\eta_{\mathcal C}=1$ if ${\mathcal C}$ is a boundary cluster and otherwise $\eta_{\mathcal C}$ has distribution given by
\begin{align} \label{near-critical-signs}
\eta^h_{\mathcal C}=\begin{cases}
1 \text{ with probability } \frac{e^{\beta_c h \mu^h_{\mathcal C}({\mathbf I}_{D})}}{2 \cosh(\beta_c h \mu^h_{\mathcal C}({\mathbf I}_{D}))} = \tanh(\beta_c h \mu^h_{\mathcal C}({\mathbf I}_{D})) + \frac{1}{2}\big(1-\tanh(\beta_c h \mu^h_{{\mathcal C}}({\mathbf I}_{D}))\big) \\ -1 \text{ with probability } \frac{e^{-\beta_c h \mu^h_{\mathcal C}({\mathbf I}_{D})}}{2 \cosh(\beta_c h \mu^h_{\mathcal C}({\mathbf I}_{D}))} = \frac{1}{2}\big(1-\tanh(\beta_c h \mu^h_{\mathcal C}({\mathbf I}_{D}))\big)
\end{cases}
\end{align}

To make sense of \eqref{eq:near-critical-field} we follow the same strategy we used in the case of the critical magnetization \eqref{def:cont-field}: we define a near-critical $\varepsilon$-cutoff field
\begin{equation} \label{eq:cutoff-nc-field}
\Phi^h_{D,\varepsilon} := \sum_{\mathcal{C}\in\mathscr{C}_D: \diam(\mathcal{C})>\varepsilon} \eta^h_{\mathcal{C}} \mu^h_{\mathcal{C}}
\end{equation}
and show that it has a limit as $\varepsilon \downarrow 0$.
With these definitions, we can write a useful near-critical scaling limit version of \eqref{expectation-field} and \eqref{pf-field}, namely,
\begin{equation} \label{expecation-nc-field}
 E\big(g\big(\Phi^h_{D,\varepsilon}(f)\big)\big) = \frac{E\big( g\big(\Phi^0_{D,\varepsilon}(f)\big) e^{\beta_c h M^0_D} \big)}{E\big( e^{\beta_c h M^0_D} \big)} 
\end{equation}
for any suitable functions $f$ and $g$.

\begin{lemma} \label{Lemma:Sobolev_convergence_nc}
	Let $D$ be a bounded, simply connected domain.
	For every $\alpha >\frac{3}{2}$, the cutoff field $\Phi^h_{D,\varepsilon}$ converges as $\varepsilon \downarrow 0$ in mean in the Sobolev space $\mathcal{H}^{-\alpha}(D)$, in the sense that there exists a $\mathcal{H}^{-\alpha}(D)$-valued random field $\Phi_D^h$ such that
	\begin{equation}
	\lim\limits_{\varepsilon \downarrow 0} E\big(\| \Phi^h_{D,\varepsilon} - \Phi_D^h \|_{\mathcal{H}^{-\alpha}(D)}\big) = 0.
	\end{equation}
\end{lemma}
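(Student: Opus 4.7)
The plan is to adapt the proof of Lemma \ref{Lemma:Sobolev_convergence}, transferring the Cauchy estimate from the near-critical setting to the critical one via the Radon--Nikodym identity encoded in \eqref{expecation-nc-field}. Since the space of $\mathcal{H}^{-\alpha}(D)$-valued random variables with norm $E(\|\cdot\|_{\mathcal{H}^{-\alpha}(D)})$ is a Banach (Bochner $L^1$) space, it suffices to check that $\{\Phi^h_{D,\varepsilon}\}_{\varepsilon>0}$ is Cauchy in this norm.

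First I would note that \eqref{expecation-nc-field} is not really specific to scalar observables of the form $g(\Phi^h(f))$: combining \eqref{RD-derivative} with the conditional sign distribution \eqref{near-critical-signs}, the joint law of $(\{\mu^h_{\mathcal{C}}\}, \{\eta^h_{\mathcal{C}}\})$ has Radon--Nikodym derivative $e^{\beta_c h M^0_D}/E(e^{\beta_c h M^0_D})$ with respect to the joint law of $(\{\mu^0_{\mathcal{C}}\}, \{\eta^0_{\mathcal{C}}\})$ (the $\cosh$ factors in \eqref{RD-derivative} cancel against the normalization in \eqref{near-critical-signs}, and the remaining exponentials collect to $e^{\beta_c h \sum_{\mathcal{C}} \eta_{\mathcal{C}} \mu_{\mathcal{C}}(\mathbf{I}_D)} = e^{\beta_c h M^0_D}$). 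Applying this identity to the functional $(X,Y)\mapsto \|X-Y\|_{\mathcal{H}^{-\alpha}(D)}$ evaluated at the pair of cutoff fields then gives, for every $0<\varepsilon'<\varepsilon$,
\[
E\bigl(\|\Phi^h_{D,\varepsilon} - \Phi^h_{D,\varepsilon'}\|_{\mathcal{H}^{-\alpha}(D)}\bigr) = \frac{E\bigl(\|\Phi^0_{D,\varepsilon} - \Phi^0_{D,\varepsilon'}\|_{\mathcal{H}^{-\alpha}(D)}\, e^{\beta_c h M^0_D}\bigr)}{E(e^{\beta_c h M^0_D})}.
\]

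Next I would apply Cauchy--Schwarz to the numerator,
\[
E\bigl(\|\Phi^0_{D,\varepsilon} - \Phi^0_{D,\varepsilon'}\|_{\mathcal{H}^{-\alpha}(D)}\, e^{\beta_c h M^0_D}\bigr) \leq \sqrt{E\bigl(\|\Phi^0_{D,\varepsilon} - \Phi^0_{D,\varepsilon'}\|^2_{\mathcal{H}^{-\alpha}(D)}\bigr)} \cdot \sqrt{E(e^{2\beta_c h M^0_D})}.
\]
The first factor tends to $0$ as $\varepsilon,\varepsilon'\downarrow 0$ by the estimate \eqref{L2-bound} established inside the proof of Lemma \ref{Lemma:Sobolev_convergence}. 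Provided the second factor is finite, this shows that $\{\Phi^h_{D,\varepsilon}\}_{\varepsilon>0}$ is Cauchy in Bochner $L^1$, hence converges to the desired limit $\Phi^h_D$.

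The main obstacle is therefore the finiteness of the exponential moment $E(e^{t M^0_D})$ for every $t\in\mathbb{R}$ and every bounded, simply connected $D$. This integrability is precisely what is needed to make sense of the normalization in \eqref{RD-derivative} in the first place, and it is established in \cite{cgn15} (see the discussion around Corollary 3.8 invoked just above \eqref{RD-derivative}). Alternatively, one can deduce it by combining a uniform-in-$a$ bound on the discrete exponential moments $\langle e^{t\Phi^{a,0}(\mathbf{I}_D)}\rangle_c$ with the distributional convergence $\Phi^{a,0}_D\to\Phi^0_D$ from Theorem \ref{Thm:Sobolev_convergence} together with a uniform integrability argument. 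Once this input is in place, the three steps above complete the proof.
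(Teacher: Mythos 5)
Your proposal is correct and follows essentially the same route as the paper: change measure from the near-critical to the critical law via the Radon--Nikodym derivative $e^{\beta_c h M^0_D}/E(e^{\beta_c h M^0_D})$, apply Cauchy--Schwarz, invoke the $L^2$ Cauchy estimate \eqref{L2-bound} from Lemma \ref{Lemma:Sobolev_convergence}, and use GHS-based finiteness of $E(e^{t M^0_D})$. The one place you are slightly more careful than the paper is in justifying why the identity \eqref{expecation-nc-field} — stated there only for scalar observables of the form $g(\Phi^h_{D,\varepsilon}(f))$ — applies to the full Sobolev-norm functional of the field: your observation that the $\cosh$ factors in \eqref{RD-derivative} cancel against the sign normalization in \eqref{near-critical-signs} to give the joint Radon--Nikodym derivative $e^{\beta_c h M^0_D}/E(e^{\beta_c h M^0_D})$ is exactly the right way to make that step rigorous, and is implicit but not spelled out in the paper.
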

\begin{proof}
The proof is analogous to that of Lemma \ref{Lemma:Sobolev_convergence}, so we point out the only difference, which consists in replacing the $L^2$ bound \eqref{L2-bound} with the following $L^1$ bound, which uses \eqref{expecation-nc-field}, the Cauchy-Schwarz inequality, \eqref{L2-bound}
and the fact that $E\big(e^{\beta_c r M^0_D}\big)$ is finite for any $r \geq 0$ by the GHS inequality (see Proposition 3 and Lemma 4 of \cite{cjn_coupling}, and the proof of Proposition~3.5 in \cite{cgn15}):
\begin{eqnarray}
E(\| \Phi^h_{D,\varepsilon} - \Phi^h_{D,\varepsilon'} \|_{\mathcal{H}^{-\alpha}(D)})
& = & \frac{E\Big(\| \Phi^0_{D,\varepsilon} - \Phi^0_{D,\varepsilon'} \|_{\mathcal{H}^{-\alpha}(D)} e^{\beta_c h M^0_D}\Big)}{E\big(e^{\beta_c h M^0_D}\big)} \\
& \leq & \frac{\sqrt{E\Big(\| \Phi^0_{D,\varepsilon} - \Phi^0_{D,\varepsilon'} \|^2_{\mathcal{H}^{-\alpha}(D)} \Big) E\big(e^{2\beta_c h M^0_D}\big)}}{E\big(e^{\beta_c h M^0_D}\big)} \\
& \leq & \frac{\sqrt{E\big(e^{2\beta_c h M^0_D}\big)}}{E\big(e^{\beta_c h M^0_D}\big)} \Big(C' \sum_i \frac{1}{\lambda_i^{\alpha-1/2}}\Big)^{1/2} \varepsilon^{7/8}.
\end{eqnarray}
The rest of the proof is the same as the proof of Lemma \ref{Lemma:Sobolev_convergence}.
\end{proof}

Lemma \ref{Lemma:Sobolev_convergence_nc} shows that it makes sense to define the near-critical continuum magnetization field in a bounded domain $D$ as an element of ${\mathcal H}^{-\alpha}(D)$:
\begin{equation} \label{def:nc-field}
\Phi^h_D = \sum_{\mathcal{C}\in\mathscr{C}_D} \eta^h_{\mathcal{C}} \mu^h_{\mathcal{C}}
\end{equation}
where the right hand side is understood as the limit of $\Phi^h_{D,\varepsilon}$ as $\varepsilon \downarrow 0$.
Next, we will show that \eqref{def:nc-field} is the near-critical scaling limit of the lattice magnetization field.

\begin{theorem} \label{Thm:Sobolev_convergence_nc}
Consider an Ising model in the near-critical regime ($\beta=\beta_c$ and $H=ha^{15/8}$ for some $h>0$) with free or plus boundary condition on $D^a = D \cap a{\mathbb Z}^2$ where $D$ is a bounded, simply connected domain of ${\mathbb R}^2$. Then the lattice magnetization
\begin{equation}
\Phi^{a,H}_D := a^{15/8} \sum_{x\in D^a}\sigma_x\delta_x
\end{equation}
converges in distribution to $\Phi_D^h$ as $a \downarrow 0$. The convergence is in any Sobolev space $\mathcal{H}^{-\alpha}(D)$ with $\alpha >\frac{3}{2}$ in the topology induced by the norm $\| \cdot \|_{\mathcal{H}^{-\alpha}(D)}$.
\end{theorem}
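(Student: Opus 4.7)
The plan is to mimic the proof of Theorem \ref{Thm:Sobolev_convergence} step by step, using the near-critical coupling of Corollary \ref{new-coupling} in place of the zero-field Edwards--Sokal coupling, and using in place of direct second-moment computations the following lattice analog of \eqref{expecation-nc-field}: for every suitable function $g$,
\[
E_{\beta_c, H}(g(\sigma)) = \frac{E_{\beta_c, 0}\big(g(\sigma)\, e^{\beta_c h M^a_D}\big)}{E_{\beta_c, 0}\big(e^{\beta_c h M^a_D}\big)},
\]
which is immediate from \eqref{expectation-field} once one rewrites $H\tilde M^a_D = h M^a_D$. The argument will proceed in two stages: first, tightness of $\Phi^{a,H}_D$ in $\mathcal{H}^{-\alpha}(D)$ for every $\alpha>3/2$; second, identification of each subsequential limit with $\Phi^h_D$ via an $\varepsilon$-cutoff decomposition.

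For tightness, I would follow the calculation leading to \eqref{second-moment-bound} verbatim, reducing matters to a uniform-in-$a$ bound on $E_{\beta_c, H}\big((\Phi^{a,H}_D({\mathbf I}_D))^2\big)$. Applying the displayed identity with $g(\sigma) = (M^a_D)^2$ and Cauchy--Schwarz gives
\[
E_{\beta_c, H}\big((M^a_D)^2\big) \le \frac{\sqrt{E_{\beta_c, 0}\big((M^a_D)^4\big)}\,\sqrt{E_{\beta_c, 0}(e^{2\beta_c h M^a_D})}}{E_{\beta_c, 0}(e^{\beta_c h M^a_D})}.
\]
The fourth moment is bounded uniformly in $a$ by the moment bounds of \cite{cgn15}; the denominator is $\geq 1$ by Jensen's inequality combined with the centering of $M^a_D$ at $h=0$; and $\sup_a E_{\beta_c, 0}(e^{r\beta_c h M^a_D}) < \infty$ for every $r\geq 0$ follows from the lattice GHS-based argument of Proposition 3 and Lemma 4 of \cite{cjn_coupling}. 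Rellich's theorem and Chebyshev's inequality then give tightness in $\mathcal{H}^{-\alpha}(D)$, exactly as in the critical case.

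Next, let $\Phi^{a_k, H}_D$ be a subsequence converging in distribution to some $\tilde\Phi^h_D$, and write $\Phi^{a_k, H}_D = \Phi^{a_k, H}_{D, \varepsilon} + R^{a_k}_\varepsilon$, where $\Phi^{a_k, H}_{D, \varepsilon}$ retains only contributions from FK clusters of diameter greater than $\varepsilon$. The convergence of the tilted FK cluster law expressed by \eqref{RD-derivative} (itself a consequence of Theorem \ref{thm:jointconv} and Corollary 3.8 of \cite{cgn15}), together with Proposition 2.2 of \cite{cn09} (a.s.\ finitely many clusters of diameter exceeding $\varepsilon$) and Lemma A.4 of \cite{CGPR}, imply that $\Phi^{a_k, H}_{D, \varepsilon}$ converges in distribution to $\Phi^h_{D, \varepsilon}$ in $\mathcal{H}^{-\alpha}(D)$ for each fixed $\varepsilon>0$. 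A calculation analogous to that in Lemma \ref{Lemma:Sobolev_convergence_nc}, combining the tilted identity, Cauchy--Schwarz, and the critical lattice $L^2$ bound \eqref{L2-bound} transported to the lattice, yields
\[
\sup_k E\big(\|R^{a_k}_\varepsilon\|_{\mathcal{H}^{-\alpha}(D)}\big) \le \frac{\sqrt{E_{\beta_c,0}(e^{2\beta_c h M^{a_k}_D})}}{E_{\beta_c,0}(e^{\beta_c h M^{a_k}_D})}\,C\,\varepsilon^{7/8}
\]
with a finite $C$ independent of $\varepsilon$ and $k$. Combining this with Lemma \ref{Lemma:Sobolev_convergence_nc} and Markov's inequality, as in the final step of the critical proof, forces $\tilde\Phi^h_D = \Phi^h_D$ in distribution.

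The hardest part is the uniform-in-$a$ control of the lattice exponential moments $\sup_a E_{\beta_c, 0}(e^{r\beta_c h M^a_D}) < \infty$: finiteness at each $a$ is immediate since $|M^a_D|$ is bounded, but uniformity as $a\downarrow 0$ relies on the GHS inequality together with concentration estimates for the critical magnetization, as developed in \cite{cgn15} and \cite{cjn_coupling}. Once this input is granted, everything else is a faithful adaptation of the critical proof, with the tilted identity taking the place of the direct Edwards--Sokal decoupling and Corollary \ref{new-coupling} providing the lattice-level coupling.
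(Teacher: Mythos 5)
Your proof follows essentially the same route as the paper's: tightness via the Radon--Nikodym tilting identity, Cauchy--Schwarz, uniform fourth-moment and exponential-moment bounds, and then identification of subsequential limits through the $\varepsilon$-cutoff decomposition based on the near-critical coupling of Corollary \ref{new-coupling}. The only cosmetic differences are that you lower-bound the normalizing denominator directly by Jensen's inequality rather than appealing to its convergence to $E(e^{\beta_c h M^0_D})$, and your residual bound carries the correct exponent $\varepsilon^{7/8}$ (with a square-root prefactor $\sqrt{E(e^{2\beta_c h M^0_D})}$) after Cauchy--Schwarz, whereas the paper's displays \eqref{zero-limit-nc}--\eqref{bound-mean-nc} contain a harmless typographical slip showing $\varepsilon^{7/4}$ and a non-square-rooted prefactor; this does not affect the conclusion since both tend to zero as $\varepsilon \downarrow 0$.
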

\begin{proof}
The proof follows the same strategy as that of Theorem~\ref{Thm:Sobolev_convergence}, so we only point out the differences.
The first difference is that, in proving the tightness of the lattice magnetization, we need to replace the bounds leading to \eqref{second-moment-bound} with
\begin{align}
\limsup_{a \downarrow 0} E_{\beta_c,H}\Big( \| \Phi^{a,H}_D \|^2_{{\mathcal H}^{-\alpha'}(D)} \Big) = \limsup_{a \downarrow 0} \frac{E_{\beta_c,0}\Big( \| \Phi^{a,0}_D \|^2_{{\mathcal H}^{-\alpha'}(D)} e^{\beta_c H M^a_D} \Big)}{E_{\beta_c,0}\big(e^{\beta_c H M^a_D} \big)} \\
\qquad \leq  \frac{1}{E\big(e^{\beta_c h M^0_D} \big)} \limsup_{a \downarrow 0} \sqrt{E_{\beta_c,0}\Big( \| \Phi^{a,0}_D \|^4_{{\mathcal H}^{-\alpha'}(D)} \Big) E_{\beta_c,0}\big(e^{2\beta_c H M^a_D} \big)} \\
\qquad \leq \frac{\sqrt{E\big(e^{2\beta_c h M^0_D}\big)}}{E\big(e^{\beta_c h M^0_D}\big)} \limsup_{a \downarrow 0} \sqrt{E_{\beta_c,0}\Big[ \Big( \sum_i \frac{1}{\lambda_i^{\alpha'}} \big( \Phi_D^{a,0}(u_i) \big)^2 \Big)^2 \Big]} \\
\qquad \leq \frac{\sqrt{E\big(e^{2\beta_c h M^0_D}\big)}}{E\big(e^{\beta_c h M^0_D}\big)} \limsup_{a \downarrow 0} \sqrt{ \Big( \sum_i \frac{\| u_i \|^2_{L^{\infty}(D)}}{\lambda_i^{\alpha'}} \Big)^2 E_{\beta_c,0}\Big[ \big( M^a_D \big)^4 \Big]} \\
\qquad \leq \frac{\sqrt{E\big(e^{2\beta_c h M^0_D}\big)}}{E\big(e^{\beta_c h M^0_D}\big)} \Big( \sum_i \frac{\| u_i \|^2_{L^{\infty}(D)}}{\lambda_i^{\alpha'}} \Big) \limsup_{a \downarrow 0} \sqrt{ E_{\beta_c,0}\Big[ \big( M^a_D \big)^4 \Big]} \\
\qquad \leq \frac{\sqrt{E\big(e^{2\beta_c h M^0_D}\big)}}{E\big(e^{\beta_c h M^0_D}\big)} c^2 \Big(\sum_i \frac{1}{\lambda_i^{1+\epsilon/2}}\Big) \limsup_{a \downarrow 0} \sqrt{ E_{\beta_c,0}\Big[ \big( M^a_D \big)^4 \Big]}, \label{second-moment-bound-nc}
\end{align}
where we have used \eqref{expectation-field}, the Cauchy-Schwarz inequality and \eqref{second-moment-bound}. The last expression is finite because of Weyl's law \cite{weyl} and the boundedness of the fourth moment of the renormalized magnetization (see, e.g., Corollary~3.8 of \cite{cgn15}).

In order to show that all subsequential limits coincide with the field $\Phi^h_D$ defined by \eqref{def:nc-field}, we let $\tilde{\Phi}^h_D$ denote any such subsequential limit obtained from a converging sequence $\Phi^{a_k,H}_D$. We then use the coupling in Corollary \ref{new-coupling} to write
\begin{equation}
\Phi^{a_k,H}_D = \Phi^{a_k,H}_{D,\varepsilon} + \sum_{i : \diam({\mathcal C}_i^{a_k})\leq\varepsilon} \eta_i \mu^{a_k}_i,
\end{equation}
where $\Phi^{a_k,H}_{D,\varepsilon} := \sum_{i : \diam({ \mathcal C}_i^{a_k})>\varepsilon} \eta_i \mu^{a_k}_i$.

The argument then proceeds as in the proof of Theorem~\ref{Thm:Sobolev_convergence}, but with the bounds leading to \eqref{zero-limit} replaced by
\begin{align}
E_{\beta_c,H}(\| \tilde{X}_{\varepsilon} \|_{\mathcal{H}^{-\alpha}(D)}) \leq \limsup_{a_k \downarrow 0} E_{\beta_c,H}\Big(\Big\| \sum_{i : \diam({\mathcal C}_i^{a_k})\leq\varepsilon} \eta_i \mu^{a_k}_i({\mathbf I}_{D}) \Big\|_{\mathcal{H}^{-\alpha}(D)}\Big) \\
\qquad = \limsup_{a_k \downarrow 0} \frac{E_{\beta_c,0}\Big(\Big\| \sum_{i : \diam({\mathcal C}_i^{a_k})\leq\varepsilon} \eta_i \mu^{a_k}_i({\mathbf I}_{D}) \Big\|_{\mathcal{H}^{-\alpha}(D)} e^{\beta_c H M^a_D} \Big)}{E_{\beta_c,0}\big(e^{\beta_c H M^a_D}\big)} \\
\qquad \leq \frac{E\big(e^{2\beta_c h M^0_D} \big)}{E\big(e^{\beta_c h M^0_D}\big)} \limsup_{a_k \downarrow 0} E_{\beta_c,0}\Big(\Big\| \sum_{i : \diam({\mathcal C}_i^{a_k})\leq\varepsilon} \eta_i \mu^{a_k}_i({\mathbf I}_{D}) \Big\|^2_{\mathcal{H}^{-\alpha}(D)} \Big) \\
\qquad \leq \frac{E\big(e^{2\beta_c h M^0_D} \big)}{E\big(e^{\beta_c h M^0_D}\big)} C' \varepsilon^{7/4} \sum_i \frac{1}{\lambda_i^{\alpha-1/2}} \rightarrow 0 \text{ as } \varepsilon \to 0, \label{zero-limit-nc}
\end{align}
where we have used \eqref{expectation-field} and the Cauchy-Schwarz inequality, as before, and \eqref{small-diameter} and \eqref{zero-limit} in the last line. We then have that
\begin{align}
E(\| \Phi^{h}_D - \tilde{\Phi}^{h}_{D} \|_{\mathcal{H}^{-\alpha}(D)}) \leq
E(\| \Phi^{h}_D - \Phi^{h}_{D,\varepsilon} \|_{\mathcal{H}^{-\alpha}(D)}) +
E(\| \tilde{X}_{\varepsilon} \|_{\mathcal{H}^{-\alpha}(D)}) \\
\qquad \leq E(\| \Phi^{h}_D - \Phi^{h}_{D,\varepsilon} \|_{\mathcal{H}^{-\alpha}(D)}) + \frac{E\big(e^{2\beta_c h M^0_D} \big)}{E\big(e^{\beta_c h M^0_D}\big)} C' \varepsilon^{7/4} \sum_i \frac{1}{\lambda_i^{\alpha-1/2}}. \label{bound-mean-nc}
\end{align}

Markov's inequality now implies that, for any $\delta>0$,
$P\big( \| \Phi^h_D - \tilde{\Phi}^h_D \|_{\mathcal{H}^{-\alpha}(D)} > \delta \big)$ can be made arbitrarily small using the bound \eqref{bound-mean-nc} together with \eqref{zero-limit-nc} and Lemma \ref{Lemma:Sobolev_convergence_nc}, and taking $\varepsilon$ small.
This concludes the proof of uniqueness of subsequential limits and of the theorem.
\end{proof}

As in the zero external field case (see \eqref{def:full-plane-field} and the discussion at the end of Section~\ref{sec:m-field}), one can consider a full-plane version of the magnetization field:
\begin{equation} \label{def:full-plane-field-nc}
\Phi^h = \sum_{\mathcal{C}\in\mathscr{C}} \eta^h_{\mathcal{C}} \mu^h_{\mathcal{C}}.
\end{equation}
The main ingredients in the construction of $\Phi^h$ are RSW, FKG and stochastic domination. Using those ingredients, for any $n \in {\mathbb N}$ and $\varepsilon>0$,
one can find an $m \in {\mathbb N}$ sufficiently large and a coupling between an Ising configuration on $a{\mathbb Z}^2$ and an Ising configuration in $[-m,m]^2 \cap a{\mathbb Z}^2$ with plus boundary condition such that the two configurations coincide inside $[-n,n]^2$ with probability at least $1-\varepsilon$. A discussion of this construction can be found in the proof of Lemma 4.1 of \cite{cgn16}.

Contrary to the critical magnetization field $\Phi^0$, the near-critical field $\Phi^h$ is not conformally covariant and the considerations at the very end of Section \ref{sec:m-field} do not apply to it. Indeed, $\Phi^h$ exhibits exponential decay of correlations, or a \emph{mass gap} in the language of field theory. This important result, proved in \cite{cjn20}, is too complex to discuss in detail and provide a complete proof here. However, below we comment briefly on it and its proof.

Let $\langle \sigma_{x};\sigma_{y} \rangle_{\beta,H}$ denote the Ising truncated two-point function, i.e.,
\begin{equation} \label{def:truncated2pf}
\langle \sigma_{x};\sigma_{y} \rangle_{\beta,H} := \langle \sigma_{x}\sigma_{y} \rangle_{\beta,H} - \langle \sigma_{x} \rangle_{\beta,H} \langle \sigma_{y} \rangle_{\beta,H}.
\end{equation}
The exponential decay of correlations in the near-critical field $\Phi^h$ is proved in \cite{cjn20} as a consequence of the following result.

\begin{theorem}[Theorem 1.1 of \cite{cjn20} and Theorem 1 of \cite{cjn_coupling}] \label{thm:exp-decay}
Consider the Ising model on $a{\mathbb Z}^2$ at inverse temperature $\beta=\beta_c$ and with external field $H=ha^{15/8}$.
There exists $B_0, B_1,C_0, C_1 \in (0,\infty)$ such that, for any $a \in (0,1]$ and $h \in (0,a^{-15/8}]$,
\begin{equation} \label{eq:exp-decay}
C_1 a^{1/4} h^{2/15} e^{-B_1 h^{8/15} |x-y|} \leq \langle \sigma_{x};\sigma_{y} \rangle_{\beta,H} \leq C_0 a^{1/4} |x-y|^{-1/4} e^{-B_0 h^{8/15} |x-y|}
\end{equation}
for any $x,y \in a{\mathbb Z}^2$.
\end{theorem}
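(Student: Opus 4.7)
The plan is to reduce the estimates on the truncated two-point function to properties of FK-Ising clusters in the near-critical regime, using the coupling of Corollary~\ref{new-coupling}, the Radon-Nikodym description of the near-critical scaling limit in~\eqref{RD-derivative}, and RSW-type estimates for critical FK-Ising percolation. Conditioning on the FK bond configuration $\omega$ and then integrating out the cluster signs $\eta_{\mathcal{C}}$ distributed according to~\eqref{C-to-g} yields the identity
\begin{equation*}
\langle \sigma_x;\sigma_y\rangle_{\beta_c,H} \;=\; E^{FK}_{\beta_c,H}\!\Big[\mathbf{1}_{\{\mathcal{C}(x)=\mathcal{C}(y)\}}\,\mathrm{sech}^2(\beta_c H |\mathcal{C}(x)|)\Big] \;+\; \mathrm{Cov}^{FK}_{\beta_c,H}\!\Big(\tanh(\beta_c H |\mathcal{C}(x)|),\,\tanh(\beta_c H |\mathcal{C}(y)|)\Big),
\end{equation*}
where $\mathcal{C}(z)$ denotes the FK cluster of $z$. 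This splits the problem into a ``same cluster'' diagonal term and a ``different clusters'' covariance term, each of which should satisfy the target bounds.

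For the upper bound, I would bound $\mathrm{sech}^2 \le 1$ and majorize the first term by the connectivity $P^{FK}_{\beta_c,H}(x\leftrightarrow y)$. Comparison with the critical measure recovers the polynomial prefactor $a^{1/4}|x-y|^{-1/4}$ from $\langle\sigma_x\sigma_y\rangle_{\beta_c,0}\asymp a^{1/4}|x-y|^{-1/4}$ (Wu). The exponential factor $e^{-B_0 h^{8/15}|x-y|}$ would then be obtained from a block-renormalization argument at the scale of the correlation length $\xi := h^{-8/15}$: one partitions the segment from $x$ to $y$ into $\asymp |x-y|/\xi$ annular shells of inner radius $\xi$, and shows that on each shell the reweighted probability of the cluster of $x$ crossing it is bounded away from $1$, uniformly in $h$. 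The crucial input is that, in units of $\xi$, the $\cosh$-reweighting in~\eqref{FK_external_field} corresponds to the integrable continuum Radon-Nikodym derivative of~\eqref{RD-derivative}, which (combined with the GHS-bounded exponential moments of $M^0_D$ used in Lemma~\ref{Lemma:Sobolev_convergence_nc}) keeps the FK cluster sizes tight at scale $\xi$. The same argument, applied to the clusters $\mathcal{C}(x)$ and $\mathcal{C}(y)$ separately, bounds the covariance term.

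For the lower bound, I would keep only the first summand and restrict to the event that $x$ and $y$ are connected by a chain of $\asymp|x-y|/\xi$ FK-Ising crossings of $\xi$-sized boxes along the segment $\overline{xy}$, each of probability bounded below by an $h$-independent constant by the RSW estimates of \cite{dhn}. Restricting further to $|\mathcal{C}(x)|\le C \xi^{15/8}=C h^{-1}$ keeps $\mathrm{sech}^2(\beta_c H |\mathcal{C}(x)|)$ uniformly bounded away from $0$, and the Radon-Nikodym reweighting pays at most a uniformly bounded multiplicative cost on this event. Concatenating the crossings yields the exponential lower bound, while the prefactor $a^{1/4}h^{2/15}$ matches $\langle\sigma_0\rangle_{\beta_c,H}^2 \asymp a^{1/4}h^{2/15}$, consistent with the near-critical magnetization scaling $\langle\sigma_0\rangle_{\beta_c,H}\asymp a^{1/8}h^{1/15}$ (the $\delta=15$ exponent).

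The main obstacle is the upper bound: the $\cosh$-reweighting in~\eqref{FK_external_field} destroys the straightforward monotonicity of the standard FK measure, so establishing uniform block estimates that survive the reweighting is delicate. A robust alternative I would keep in reserve is to work directly in the scaling limit via Theorems~\ref{thm:jointconv} and~\ref{Thm:Sobolev_convergence_nc}, use the continuum formula~\eqref{RD-derivative}, exploit the self-similarity of the critical CME and Jensen's inequality applied to $\cosh$ to obtain continuum exponential decay, and then transfer the result back to the lattice through the convergence statements already in hand.
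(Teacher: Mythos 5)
Your opening identity
\[
\langle\sigma_x;\sigma_y\rangle_{\beta_c,H}
= E^{FK}_{\beta_c,H}\!\big[\mathbf{1}_{\{\mathcal{C}(x)=\mathcal{C}(y)\}}\,\mathrm{sech}^2(\beta_c H|\mathcal{C}(x)|)\big]
+ \mathrm{Cov}^{FK}_{\beta_c,H}\!\big(\tanh(\beta_c H|\mathcal{C}(x)|),\tanh(\beta_c H|\mathcal{C}(y)|)\big)
\]
is correct, and the scaling heuristics (one-arm exponent $1/8$, $\langle\sigma_0\rangle_{\beta_c,H}\asymp a^{1/8}h^{1/15}$) are on target. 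But the proposal stops precisely where the actual difficulty begins. You phrase the exponential upper bound as ``show that the reweighted probability of the cluster of $x$ crossing a $\xi$-shell is bounded away from $1$, uniformly in $h$,'' with the GHS-controlled exponential moments of $M^0_D$ as the ``crucial input.'' That is a restatement of the goal, not a mechanism: moment bounds on $M^0_D$ do not tell you why a cluster that crosses a shell of side $\xi=h^{-8/15}$ is unlikely to avoid the ghost, and they give you no way to decorrelate successive shells so that the per-shell bound actually multiplies. The paper's proof does both with a specific geometric argument: in the ghost FK representation it reduces to showing $P^{FK}_{\beta_c,H}(g\centernot\longleftrightarrow x\longleftrightarrow\partial B(x,L))\le \tilde C a^{1/8}e^{-\hat C L}$, which it proves by using the coupled CLE$_{16/3}$/CME$_{16/3}$ to produce, at scale $N\asymp\xi$, a chain of at most $K$ touching macroscopic clusters each of mass $\gtrsim N^{15/8}a^{-15/8}/K$; since $H\cdot N^{15/8}a^{-15/8}/K\asymp 1/K$, each such cluster connects to $g$ with probability bounded below, giving ``good'' annuli that with high probability contain a $g$-connected circuit; Liggett--Schonmann--Stacey then makes good annuli percolate despite the overlap, and planarity converts a surrounding circuit of good annuli into the desired blocking event. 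None of this is present in your sketch, and it is the heart of the proof.

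There is also a representational issue you flag but do not resolve, and the paper's resolution is to \emph{not} run the argument in the $\cosh$-reweighted measure of Corollary~\ref{new-coupling}. That measure is not an FK (random-cluster) measure and does not obviously enjoy FKG or monotonicity in boundary conditions, which you correctly identify as a problem. The paper instead works in the ghost-vertex FK measure $P^{FK}_{\beta_c,H}$ on the augmented graph, which \emph{is} a genuine random-cluster measure and retains FKG; its first step is the decomposition
$\langle\sigma_x;\sigma_y\rangle_{\beta_c,H}=P^{FK}_{\beta_c,H}(x\leftrightarrow y\centernot\leftrightarrow g)+\mathrm{Cov}^{FK}_{\beta_c,H}(\mathbf{1}_{\{x\leftrightarrow g\}},\mathbf{1}_{\{y\leftrightarrow g\}})$,
which is the ghost analogue of your identity. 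The remaining covariance term is then handled with a near/far splitting, a one-arm bound, and monotonicity of a suitable conditional Radon--Nikodym derivative against the wired measure---again all relying on FKG properties unavailable in the $\cosh$ representation. Your ``reserve'' plan of proving a continuum statement via Jensen and transferring back is too vague to close the gap: \eqref{eq:exp-decay} is a quantitative lattice bound uniform over $a\le 1$ and $h\le a^{-15/8}$, and nothing in the listed theorems provides such uniform lattice control from a continuum input. (Minor: your cutoff for the lower bound should be $|\mathcal{C}(x)|\le C h^{-1}a^{-15/8}$, since $|\mathcal{C}(x)|$ counts lattice sites, not continuum mass.)
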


The exponential decay of the Ising truncated two-point function for $H>0$ was first proved in \cite{LP68}, but Theorem \ref{thm:exp-decay} is the first result to provide the correct rate of decay as a function of the external field in the near-critical regime, combined with the correct power law behavior at short distances. These features are crucial in order to obtain a meaningful bound in the scaling limit.
Using Theorem \ref{thm:exp-decay}, one has the following result, also proved in \cite{cjn20}.

\begin{theorem}[Theorem 1.4 of \cite{cjn20}] \label{thm:exp-decay-field}
For any $f,g \in C^{\infty}_0$, we have
\begin{equation} \label{eq:exp-decay-field}
|\text{\emph{Cov}}(\Phi^h(f),\Phi^h(g))| \leq C_0 \iint_{{\mathbb R}^2 \times {\mathbb R}^2} |f(x)| |g(y)| |x-y|^{-1/4} e^{-B_0 h^{8/15} |x-y|} dx dy,
	\end{equation}
where $B_0$ and $C_0$ are as in Theorem \ref{thm:exp-decay}.
\end{theorem}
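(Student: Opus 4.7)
The plan is to derive the bound at the lattice level and then pass to the scaling limit. Starting from $\Phi^{a,H}(f) = a^{15/8} \sum_{x \in a\mathbb{Z}^2} f(x) \sigma_x$, bilinearity of covariance gives
\[
\text{Cov}\bigl(\Phi^{a,H}(f), \Phi^{a,H}(g)\bigr) \;=\; a^{15/4} \sum_{x,y \in a\mathbb{Z}^2} f(x)\, g(y)\, \langle \sigma_x ; \sigma_y \rangle_{\beta_c, \, h a^{15/8}}.
\]
Applying the upper bound of Theorem \ref{thm:exp-decay} with $H = ha^{15/8}$ yields
\[
\bigl|\text{Cov}\bigl(\Phi^{a,H}(f), \Phi^{a,H}(g)\bigr)\bigr| \;\leq\; C_0 \, a^4 \sum_{x,y \in a\mathbb{Z}^2} |f(x)|\, |g(y)|\, |x-y|^{-1/4}\, e^{-B_0 h^{8/15} |x-y|},
\]
where $a^{15/4} \cdot a^{1/4} = a^4$ plays the role of the squared lattice Jacobian $(a^2)^2$. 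Since $f$ and $g$ are compactly supported and the $|x-y|^{-1/4}$ diagonal singularity is locally integrable in two dimensions, a standard Riemann-sum/dominated-convergence argument shows that the right-hand side converges as $a \downarrow 0$ to $C_0 \iint_{\mathbb{R}^2 \times \mathbb{R}^2} |f(x)| |g(y)| |x-y|^{-1/4} e^{-B_0 h^{8/15} |x-y|}\, dx\, dy$.

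It then remains to verify that $\text{Cov}(\Phi^{a,H}(f), \Phi^{a,H}(g)) \to \text{Cov}(\Phi^h(f), \Phi^h(g))$ as $a \downarrow 0$. Fix a bounded simply-connected domain $D$ containing $\text{supp}(f) \cup \text{supp}(g)$ and some $\alpha > 3/2$. Theorem \ref{Thm:Sobolev_convergence_nc} gives $\Phi^{a,H}_D \to \Phi^h_D$ in distribution in $\mathcal{H}^{-\alpha}(D)$, and since $f, g \in C^\infty_0(D) \subset \mathcal{H}^\alpha_0(D)$ induce bounded linear functionals, $(\Phi^{a,H}(f), \Phi^{a,H}(g))$ converges jointly in distribution. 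To upgrade this to convergence of covariances, uniform integrability is required: the chain of bounds leading to \eqref{second-moment-bound-nc} is already built on a uniform fourth-moment control of the rescaled magnetization, and running the same argument with the exponent two replaced by four throughout yields a uniform-in-$a$ bound on $E\bigl[\|\Phi^{a,H}_D\|^4_{\mathcal{H}^{-\alpha}(D)}\bigr]$. Combined with the duality $|\Phi^{a,H}(f)| \leq \|f\|_{\mathcal{H}^\alpha_0(D)} \|\Phi^{a,H}_D\|_{\mathcal{H}^{-\alpha}(D)}$, this gives $\sup_a E\bigl[(\Phi^{a,H}(f))^4\bigr] < \infty$, and likewise for $g$. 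By Cauchy--Schwarz, $\Phi^{a,H}(f)\Phi^{a,H}(g)$ is bounded in $L^2$ uniformly in $a$, delivering the required uniform integrability of both the product and its marginals. Finally, to replace $\Phi^h_D$ by the full-plane field $\Phi^h$, we use the coupling recalled after \eqref{def:full-plane-field-nc}: given $\varepsilon > 0$, for $D$ large one can couple $\Phi^h_D$ and $\Phi^h$ so that they agree on $\text{supp}(f) \cup \text{supp}(g)$ with probability at least $1-\varepsilon$, transferring the bound to $\Phi^h$.

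The main obstacle I foresee is the uniform integrability step. While the second (and, by the same scheme, fourth) moment of $\Phi^{a,H}_D$ in the Sobolev norm is controlled in the proof of Theorem \ref{Thm:Sobolev_convergence_nc}, some care is needed to promote these Sobolev bounds to a uniform moment bound of order strictly greater than one on the paired evaluations $\Phi^{a,H}(f)\Phi^{a,H}(g)$, and in particular to justify using the GHS-type moment bounds available for $M^a_D$ to feed the calculation analogous to \eqref{second-moment-bound-nc}. Once this technical point is settled, the lattice two-point bound of Theorem \ref{thm:exp-decay} flows through the Riemann sum to give the claimed integral bound on the covariance of the limiting field.
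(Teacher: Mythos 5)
Your proposal is correct in substance. The paper itself does not reproduce a proof of this theorem (it is cited directly from \cite{cjn20}), but the route you take --- apply the lattice estimate of Theorem \ref{thm:exp-decay} to the double sum representing $\text{Cov}(\Phi^{a,H}(f),\Phi^{a,H}(g))$, let the Riemann sum converge to the integral, and then show that the lattice covariances converge to the covariances of the limiting field --- is the natural one and is in the same spirit as the source. You correctly identify that the only real work is the uniform integrability needed to upgrade convergence in distribution of $(\Phi^{a,H}(f),\Phi^{a,H}(g))$ to convergence of the covariance, and you correctly identify that this can be extracted from moment bounds on the rescaled magnetization via GHS/Corollary~3.8 of \cite{cgn15}. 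Two small remarks. First, the $x=y$ diagonal terms in the Riemann sum should be handled separately, since the bound \eqref{eq:exp-decay} degenerates there; but they contribute $O(a^{15/4}\cdot a^{-2})=O(a^{7/4})\to 0$ since the truncated autocorrelation is at most $1$, so this is harmless. Second, the detour through fourth moments of the Sobolev norm followed by duality is more machinery than needed: one can bound $E_{\beta_c,H}\big[(\Phi^{a,H}(f))^4\big]\le\|f\|_\infty^4\,E_{\beta_c,H}\big[(M^a_D)^4\big]$ directly using the nonnegativity of four-point correlations (first Griffiths inequality, valid for $H\ge 0$), and then control $E_{\beta_c,H}\big[(M^a_D)^4\big]$ uniformly in $a$ by the same Radon--Nikodym/Cauchy--Schwarz argument you outline. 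This gives the required uniform fourth-moment bound on $\Phi^{a,H}(f)$ and $\Phi^{a,H}(g)$, and hence $L^2$ boundedness of the product, without ever passing through $\|\Phi^{a,H}_D\|_{\mathcal{H}^{-\alpha}(D)}$.
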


Although the near-critical magnetization field is not conformally covariant like the critical one, it still possesses interesting scaling properties, as shown by the next result.

\begin{theorem}[Theorem 4.2 of \cite{cjn20}]\label{thmscl2}
	For any $\lambda>0$ and $h>0$, the field $\lambda^{1/8}\Phi^{h}(\lambda x)$ is equal in distribution to $\Phi^{\lambda^{15/8}h}(x)$.
\end{theorem}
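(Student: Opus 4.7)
The plan is to derive the scaling identity from an exact lattice symmetry of the near-critical Ising model and then pass to the scaling limit using Theorem~\ref{Thm:Sobolev_convergence_nc}. The key observation is that dilating the lattice by a factor $\lambda$ can be absorbed into a renormalization of the external field: an Ising configuration $\sigma$ on $a\mathbb{Z}^2$ at inverse temperature $\beta_c$ and external field $H = h a^{15/8}$ is, via the tautological relabeling $\tilde\sigma_z := \sigma_{\lambda z}$, an Ising configuration at the same $(\beta_c, H)$ on $a'\mathbb{Z}^2$ with $a' := a/\lambda$. Since $H = h a^{15/8} = h'(a')^{15/8}$ with $h' := \lambda^{15/8} h$, the reparametrized model is near-critical with parameter $h'$. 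Testing the rescaled magnetization fields against any $f \in C_0^\infty(\mathbb{R}^2)$ and changing variables in the sum yields the deterministic (on the coupled configurations) identity
\begin{equation} \label{eq:discrete-scaling-id}
\Phi^{a',H'}(f) = \lambda^{-15/8}\,\Phi^{a,H}(f_{1/\lambda}),\qquad f_{1/\lambda}(x) := f(x/\lambda).
\end{equation}

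Sending $a \downarrow 0$ in \eqref{eq:discrete-scaling-id}, the full-plane counterpart of Theorem~\ref{Thm:Sobolev_convergence_nc} makes the left-hand side converge in distribution to $\Phi^{\lambda^{15/8} h}(f)$ and the right-hand side to $\lambda^{-15/8}\Phi^h(f_{1/\lambda})$. A change of variables gives
\begin{equation*}
\lambda^{1/8}\Phi^h(\lambda\,\cdot)(f) = \lambda^{1/8}\int_{\mathbb{R}^2} \Phi^h(\lambda x)\,f(x)\,dx = \lambda^{1/8-2}\,\Phi^h(f_{1/\lambda}) = \lambda^{-15/8}\Phi^h(f_{1/\lambda}),
\end{equation*}
so the limiting identity reads $\Phi^{\lambda^{15/8} h}(f) \stackrel{d}{=} \lambda^{1/8}\Phi^h(\lambda\,\cdot)(f)$ for every $f \in C_0^\infty(\mathbb{R}^2)$. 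Since the two sides are random elements of a common negative-index Sobolev space and agree in distribution when tested against any compactly supported smooth function, this yields equality in distribution of the two random fields, which is the claim.

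The only nontrivial point is the passage to the full-plane scaling limit, since Theorem~\ref{Thm:Sobolev_convergence_nc} is stated only for bounded, simply connected domains. To bypass this I would fix $f \in C_0^\infty(\mathbb{R}^2)$, pick a bounded domain $D$ containing $\text{supp}(f) \cup \text{supp}(f_{1/\lambda})$, and invoke the RSW/FKG/stochastic-domination coupling between full-plane Ising and plus-boundary Ising in a large box $[-m,m]^2 \supset D$ used to construct $\Phi^h$ (as described after Theorem~\ref{Thm:Sobolev_convergence_nc}; cf.\ Lemma~4.1 of~\cite{cgn16}). For any $\varepsilon > 0$, taking $m$ sufficiently large makes the two lattice fields agree on $D$ with probability at least $1-\varepsilon$, and Theorem~\ref{Thm:Sobolev_convergence_nc} applied in $[-m,m]^2$ then delivers convergence of both sides of \eqref{eq:discrete-scaling-id} to the corresponding full-plane limits. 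The discrete identity itself is free, so the entire proof reduces to this limiting argument.
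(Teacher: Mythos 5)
The paper itself does not supply a proof of Theorem~\ref{thmscl2}; it simply cites Theorem~4.2 of \cite{cjn20}, so there is no in-text argument to compare against. Your proposal is correct in substance and is the natural ``lattice'' route. The exact relabeling identity $\Phi^{a/\lambda,\,H}(f)=\lambda^{-15/8}\Phi^{a,H}(f_{1/\lambda})$ is a genuine pathwise equality on the same probability space (not merely in distribution), and because of this the two sides have equal \emph{joint} laws over any finite family of test functions; passing to the limit therefore gives equality in distribution of the two limiting random fields, not just of their one-dimensional marginals. Your phrasing (``agree in distribution when tested against any compactly supported smooth function, this yields equality in distribution of the two random fields'') reads as if you were inferring the latter from the former, which would be a gap; it is worth making explicit that you are using the pathwise nature of the lattice identity to get equality of finite-dimensional joint distributions, and hence of laws in $\mathcal H^{-\alpha}$.

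Two further points worth flagging. First, the passage to the full plane leans on the assertion that the infinite-volume near-critical lattice magnetization converges to $\Phi^h$; the paper states this (via the RSW/FKG/stochastic-domination coupling to plus-b.c.\ boxes, cf.\ the discussion around \eqref{def:full-plane-field-nc} and Lemma~4.1 of \cite{cgn16}) but does not prove it in detail, so your reduction to Theorem~\ref{Thm:Sobolev_convergence_nc} in a large box together with that coupling is the right way to make this rigorous. Second, an alternative argument more in the spirit of the present paper's CME framework is available and instructive: one can use the scale covariance of the full-plane CME$_{16/3}$ (Theorem~4 of \cite{CCK19}, cf.\ Remark~\ref{rem:cme}), under which cluster measures transform by $\mu^0_{\lambda\mathcal C}(\mathbf I_{\lambda D})=\lambda^{15/8}\mu^0_{\mathcal C}(\mathbf I_D)$, and observe that both the Radon--Nikodym derivative \eqref{RD-derivative} and the sign distribution \eqref{near-critical-signs} depend on $h$ and the cluster measures only through the product $h\,\mu^0_{\mathcal C}(\mathbf I_D)$, which is invariant under $(h,\mu)\mapsto(\lambda^{15/8}h,\lambda^{-15/8}\mu)$. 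This yields the same scaling relation directly in the continuum. Both routes are correct; yours has the advantage of being elementary and self-contained once the full-plane scaling-limit convergence is granted, while the CME route avoids the lattice altogether but requires more care with the full-plane Radon--Nikodym formalism.
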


The following observation may be useful to interpret Theorem \ref{thmscl2}.
As discussed at the end of Section \ref{sec:m-field}, in the zero-field case, $\Phi^{0}(\lambda x)$ is equal in distribution to $\lambda^{-1/8}\Phi^{0}(x)$ in the sense that, with the change of variables $z=\lambda x$,
\[\int \Phi^{0}(z)f(z)dz \stackrel{dist}{=} \int \lambda^{-1/8} \Phi^{0}(x) f(\lambda x) \lambda^2 dx \stackrel{dist}{=} \lambda^{15/8} \int \Phi^{0}(x) f(\lambda x) dx\]
for any $f\in C^{\infty}_0(\mathbb{R}^2)$, where the equalities are in distribution.
In the non-zero-field case, provided that $\tilde{h}=\lambda^{-15/8}h$, using Theorem \ref{thmscl2} one obtains an analogous relation as follows:
\begin{eqnarray*}
	\int\Phi^{\tilde{h}}(z)f(z)dz&=&\int\Phi^{\lambda^{-15/8}h}(\lambda x)f(\lambda x) \lambda^2 dx\\
	&=&\lambda^{15/8}\int\lambda^{1/8}\Phi^{\lambda^{-15/8}h}(\lambda x)f(\lambda x) dx\\
	&=&\lambda^{15/8}\int\Phi^{h}(x)f(\lambda x) dx.
\end{eqnarray*}

We now consider the field $\Phi_D^h$ in a simply connected domain not equal to $\mathbb C$ and a conformal map $\phi: D \to \tilde{D}$ with inverse $\psi=\phi^{-1}:\tilde{D} \rightarrow D$. The pushforward by $\phi$ of $\Phi^0_{D}$ to a generalized field on $\tilde{D}$ is described explicitly in Theorem 1.8 of \cite{cgn15}. The generalization to $\Phi^h$, implicit in \cite{cgn16}, is stated explicitly in the next theorem, taken from \cite{cjn20}, where we introduce the non-constant magnetic fields $h(z)$ and $\tilde{h}(x)$.
\begin{theorem}[Theorem 4.3 of \cite{cjn20}]\label{thmcon}
	The field $\Phi^{h}_{D,\psi}(x):=\Phi^{h}_{D}\left(\psi(x)\right)$ on $\tilde{D}$ is equal in distribution to the field $|\psi^{\prime}(x)|^{-1/8}\Phi^{\tilde{h}}_{\tilde{D}}(x)$ on $\tilde{D}$, where $\tilde{h}(x)=|\psi^{'}(x)|^{15/8}h(\psi(x))$.
\end{theorem}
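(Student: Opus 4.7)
The plan is to reduce the conformal covariance of the near-critical field $\Phi^h_D$ to that of the critical field $\Phi^0_D$ (Theorem~\ref{thm:conf-cov}, in the sharper form of Theorem~1.8 of~\cite{cgn15}), using the fact that $\Phi^h_D$ is a Radon--Nikodym tilting of $\Phi^0_D$ as encoded by \eqref{RD-derivative}--\eqref{near-critical-signs} and \eqref{expecation-nc-field}. Since non-constant $h$ does not appear earlier in the excerpt, I first extend the construction: on the lattice a position-dependent external field $H_x=h(x)a^{15/8}$ fits Corollary~\ref{new-coupling} verbatim with $H|\mathcal{C}_i|$ replaced by $\sum_{x\in\mathcal{C}_i}H_x\to\mu^h_{\mathcal{C}}(h)$ in the scaling limit. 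The proofs of Lemmas~\ref{Lemma:Sobolev_convergence} and~\ref{Lemma:Sobolev_convergence_nc} go through unchanged (the GHS bound still ensures $E[e^{\beta_c\Phi^0_D(h)}]<\infty$ for bounded $h$), so the $\varepsilon$-cutoff field $\Phi^h_{D,\varepsilon}$ converges in $\mathcal{H}^{-\alpha}(D)$ to $\Phi^h_D$ and satisfies
\begin{equation*}
E\bigl[g\bigl(\Phi^h_{D,\varepsilon}(f)\bigr)\bigr] \;=\; \frac{E\bigl[g\bigl(\Phi^0_{D,\varepsilon}(f)\bigr)\,e^{\beta_c\Phi^0_D(h)}\bigr]}{E\bigl[e^{\beta_c\Phi^0_D(h)}\bigr]}
\end{equation*}
for all bounded continuous $g$ and all test functions $f$.

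For $f\in C^\infty_0(\tilde D)$, the change of variables $z=\psi(x)$ with $dx=|\phi'(z)|^2\,dz$ gives $\Phi^h_{D,\psi}(f)=\Phi^h_D((f\circ\phi)|\phi'|^2)$. Inserting this into the tilting identity and then applying the conformal covariance of the critical field, I will verify
\begin{equation*}
\Phi^0_D\bigl((f\circ\phi)|\phi'|^2\bigr)\;\stackrel{dist}{=}\;\Phi^0_{\tilde D}\bigl(|\psi'|^{-1/8}f\bigr), \qquad \Phi^0_D(h)\;\stackrel{dist}{=}\;\Phi^0_{\tilde D}(\tilde h),
\end{equation*}
with $\tilde h(x)=|\psi'(x)|^{15/8}h(\psi(x))$ exactly as in the statement; both identities follow from Theorem~1.8 of~\cite{cgn15} by a routine Jacobian computation using $|\phi'(\psi(x))|\,|\psi'(x)|=1$. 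Substituting them \emph{jointly} into the tilted expectation transforms the ratio into the corresponding Radon--Nikodym expression for $\Phi^{\tilde h}_{\tilde D}(|\psi'|^{-1/8}f)$; letting $\varepsilon\downarrow 0$ and invoking Lemma~\ref{Lemma:Sobolev_convergence_nc} on both $D$ and $\tilde D$ finishes the proof.

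The delicate point is precisely this joint step: one needs the pair $\bigl(\Phi^0_D((f\circ\phi)|\phi'|^2),\,\Phi^0_D(h)\bigr)$ to have the same joint law as $\bigl(\Phi^0_{\tilde D}(|\psi'|^{-1/8}f),\,\Phi^0_{\tilde D}(\tilde h)\bigr)$, not merely matching marginals. This is what the Sobolev-space formulation of Theorem~1.8 of~\cite{cgn15} provides, being an equality of \emph{full} random fields: every pairing of $\Phi^0_D$ with a suitable test function transforms coherently, so all finite joint distributions of such pairings match. A secondary technical issue is that the $\varepsilon$-cutoff is defined via cluster diameters in $D$, which are only comparable to diameters in $\tilde D$ up to bounded constants on compact subsets; this suffices to match the two cutoffs up to an error that vanishes as $\varepsilon\downarrow 0$, and together with Lemmas~\ref{Lemma:Sobolev_convergence} and~\ref{Lemma:Sobolev_convergence_nc} this controls the remaining error terms.
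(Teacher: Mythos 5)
The paper cites this result as Theorem 4.3 of \cite{cjn20} but does not reproduce a proof, so there is no proof in this source to compare against. Judged on its own merits, your proposal is a valid route and almost certainly the intended one: you express $\Phi^h_D$ as a Radon--Nikodym tilting of $\Phi^0_D$ via \eqref{RD-derivative}--\eqref{expecation-nc-field}, then transfer the conformal covariance of the critical field (Theorem~1.8 of~\cite{cgn15}) through the tilting. Your Jacobian bookkeeping is correct: $\Phi^h_{D,\psi}(f)=\Phi^h_D\bigl((f\circ\phi)\,|\phi'|^2\bigr)$, and plugging in $|\phi'(\psi(x))|\,|\psi'(x)|=1$ yields both $\Phi^0_D\bigl((f\circ\phi)|\phi'|^2\bigr)\stackrel{dist}{=}\Phi^0_{\tilde D}(|\psi'|^{-1/8}f)$ and $\Phi^0_D(h)\stackrel{dist}{=}\Phi^0_{\tilde D}(\tilde h)$, with $\tilde h=|\psi'|^{15/8}\,h\circ\psi$ coming out as the only choice that absorbs the $|\psi'|^{-1/8}\cdot|\psi'|^2$ factor. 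You also correctly flag that the argument needs the \emph{joint} equality of laws, and that this is precisely what an equality of full random fields in Sobolev space gives: apply the field identity to the pair of test functions $f$ and $h(\psi(\cdot))|\psi'|^2$ simultaneously.

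Two remarks on the technical points you raise. First, your handling of the $\varepsilon$-cutoff is awkwardly ordered: if you substitute the conformal map before sending $\varepsilon\downarrow 0$, you are comparing a diameter-$\varepsilon$ cutoff in $D$-coordinates against one in $\tilde D$-coordinates, and these only agree up to a nonconstant factor that degenerates near $\partial D$. The cleaner route is to first pass to the limit in \eqref{expecation-nc-field} (using Lemmas~\ref{Lemma:Sobolev_convergence} and~\ref{Lemma:Sobolev_convergence_nc}, boundedness of $g$, and dominated convergence with dominating function $\|g\|_\infty e^{\beta_c\Phi^0_D(h)}$, which is integrable by GHS), obtaining the tilting identity directly for the limit fields $\Phi^h_D$ and $\Phi^0_D$, and only then change variables; the cutoff mismatch then never arises. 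Second, the non-constant-$h$ extension of Corollary~\ref{new-coupling} and of Lemma~\ref{Lemma:Sobolev_convergence_nc} that you sketch is indeed needed (the statement features a genuinely non-constant $\tilde h$ even when $h$ is constant), and your sketch is plausible, but it should be stated as an explicit hypothesis or lemma rather than asserted in passing: in particular, the sign distribution \eqref{near-critical-signs} must be taken with $\mu^h_{\mathcal C}(h)$ in place of $h\,\mu^h_{\mathcal C}(\mathbf{I}_D)$, and one must check that the lattice-to-continuum convergence arguments are insensitive to replacing the constant field by a bounded continuous one, which is what the coupling in Corollary~\ref{new-coupling} and the GHS bound provide.
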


Combining the scaling properties of $\Phi^h$ with Theorem \ref{thm:exp-decay-field} leads to the precise power-law behavior of the \emph{mass} ${\mathbf m}(\Phi^h)$ of $\Phi^h$, defined as the supremum over all $m$ such that, for all $f,g \in C^{\infty}_0({\mathbb R}^2)$ and some $C_m(f,g)<\infty$,
\begin{equation}
|\text{Cov}(\Phi^h(f),\Phi^h(T^u g))| \leq C_m(f,g) e^{-mu},
\end{equation}
where $(T^u g)(x_0,x_1) = g(x_0-u,x_1)$.

\begin{corollary}[Corollary 1.6 of \cite{cjn20}]\label{thm:mass}
${\mathbf m}(\Phi^h)=Ch^{8/15}$ for some $C \in (0,\infty)$ and all $h$.
\end{corollary}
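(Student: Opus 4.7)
The plan is to isolate the $h$-dependence using the exact scaling identity of Theorem~\ref{thmscl2}, thereby reducing the claim to the two inequalities $0 < \mathbf{m}(\Phi^1) < \infty$, which can then be read off from Theorems~\ref{thm:exp-decay-field} and \ref{thm:exp-decay} respectively.

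For the scaling reduction, I would fix $h > 0$ and set $\lambda := h^{-8/15}$, so that $\lambda^{15/8}h = 1$. Applying Theorem~\ref{thmscl2} in the form $\lambda^{1/8}\Phi^h(\lambda\,\cdot\,) \stackrel{d}{=} \Phi^1$ and testing against $f \in C^\infty_0$, a change of variables yields $\Phi^h(f) \stackrel{d}{=} \lambda^{15/8}\Phi^1(f_\lambda)$ with $f_\lambda(x) := f(\lambda x)$. Since $(T^u g)_\lambda = T^{u/\lambda} g_\lambda$, the joint scaling gives
\begin{equation}
\mathrm{Cov}\bigl(\Phi^h(f),\Phi^h(T^u g)\bigr) = \lambda^{15/4}\,\mathrm{Cov}\bigl(\Phi^1(f_\lambda),\Phi^1(T^{u/\lambda}g_\lambda)\bigr).
\end{equation}
Exponential decay at rate $m$ in $u$ on the left thus corresponds to exponential decay at rate $m\lambda$ in $u/\lambda$ on the right. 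Since the map $f \mapsto f_\lambda$ is a bijection of $C^\infty_0$, taking suprema gives $\mathbf{m}(\Phi^h) = \mathbf{m}(\Phi^1)/\lambda = h^{8/15}\,\mathbf{m}(\Phi^1)$, and it remains to prove $0 < \mathbf{m}(\Phi^1) < \infty$.

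The lower bound $\mathbf{m}(\Phi^1) \ge B_0$ is immediate from Theorem~\ref{thm:exp-decay-field} at $h=1$: on the compact supports of $f$ and $T^u g$, the distance is of order $u$, so the subexponential factor $|x-y|^{-1/4}$ in \eqref{eq:exp-decay-field} can be absorbed into the exponential at the cost of replacing $B_0$ by any $B_0' < B_0$.

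The main obstacle is the matching upper bound $\mathbf{m}(\Phi^1) < \infty$, which requires an exponential \emph{lower} bound on the continuum covariance. My plan is to push the lattice lower bound in Theorem~\ref{thm:exp-decay}, $\langle \sigma_x;\sigma_y \rangle_{\beta_c,a^{15/8}} \ge C_1 a^{1/4} e^{-B_1|x-y|}$, through the near-critical scaling limit via the identity
\begin{equation}
\mathrm{Cov}\bigl(\Phi^{a,a^{15/8}}(f),\Phi^{a,a^{15/8}}(g)\bigr) = a^{15/4}\!\!\sum_{x,y \in a{\mathbb Z}^2}\! f(x)g(y)\,\langle \sigma_x;\sigma_y\rangle_{\beta_c,a^{15/8}}.
\end{equation}
Granting that this lattice covariance converges to $\mathrm{Cov}(\Phi^1(f),\Phi^1(g))$ as $a \downarrow 0$, a Riemann-sum argument then yields, for nonnegative $f,g \in C^\infty_0$,
\begin{equation}
\mathrm{Cov}\bigl(\Phi^1(f),\Phi^1(g)\bigr) \ge C_1 \iint f(z_1)g(z_2)\,e^{-B_1|z_1-z_2|}\,dz_1\,dz_2,
\end{equation}
and choosing $f,g \ge 0$ to be bump functions supported in unit balls and replacing $g$ by $T^u g$ forces decay no slower than $e^{-B_1 u}$ up to constants, giving $\mathbf{m}(\Phi^1) \le B_1 < \infty$. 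The delicate step is the convergence of the second moment of the lattice field to that of the continuum field, which is not immediate from Theorem~\ref{Thm:Sobolev_convergence_nc}; I would justify it via uniform integrability, using the uniform fourth-moment bound on the rescaled magnetization invoked in the proof of that theorem. Combining the three steps produces $\mathbf{m}(\Phi^h) = C\,h^{8/15}$ with $C := \mathbf{m}(\Phi^1) \in (0,\infty)$.
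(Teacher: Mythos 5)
Your scaling reduction and the lower bound $\mathbf{m}(\Phi^1)\ge B_0$ coincide exactly with what the paper indicates: the remark after the corollary attributes the power-law $h^{8/15}$ to the scaling properties of $\Phi^h$ (Theorem~\ref{thmscl2}) and calls the positivity of the mass ``an immediate consequence of Theorem~\ref{thm:exp-decay-field}.'' Where you depart from the paper is the upper bound $\mathbf{m}(\Phi^1)<\infty$: the paper only says that ``assuming $\mathbf{m}(\Phi^h)=\infty$ leads to a contradiction'' and defers to the proof of Corollary~1.6 of \cite{cjn20}, whereas you give a direct, constructive argument by pushing the exponential lower bound of Theorem~\ref{thm:exp-decay} through the near-critical scaling limit to obtain an explicit lower bound $\mathrm{Cov}(\Phi^1(f),\Phi^1(T^ug))\ge C_1 e^{-B_1(u+2)}\|f\|_1\|g\|_1$ for positive bump functions. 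Since both arguments ultimately hinge on the lattice lower bound of Theorem~\ref{thm:exp-decay}, this is a difference of presentation rather than of substance, but your version is arguably cleaner and makes the source of the finiteness transparent, pinning the mass between $B_0$ and $B_1$.

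You correctly identify the delicate step, namely that convergence in distribution in a Sobolev space (Theorem~\ref{Thm:Sobolev_convergence_nc}) does not by itself yield convergence of covariances, and you propose uniform integrability via higher moment bounds; this is consistent with the machinery the paper uses (the proof of Theorem~\ref{Thm:Sobolev_convergence_nc} invokes a fourth moment bound from Corollary~3.8 of \cite{cgn15}, which in fact provides exponential moments, so the $L^{2+\epsilon}$ bound you need for uniform integrability of the products is available after a Cauchy--Schwarz with $e^{\beta_c H M^a_D}$). The one point you leave unaddressed is that Theorem~\ref{Thm:Sobolev_convergence_nc} concerns the field in a fixed bounded domain $D$, while $\mathbf{m}(\Phi^h)$ is a property of the full-plane field; closing this requires the approximation of the full-plane lattice and continuum fields by their bounded-domain analogues (via RSW, FKG and stochastic domination, as mentioned at the end of Section~\ref{sec:near-critical-sl} in connection with Lemma~4.1 of \cite{cgn16}), uniformly in $u$. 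This is not a conceptual obstacle, but it is a piece of bookkeeping your write-up should acknowledge.
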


\begin{remark}
As mentioned above, the power law behavior ${\mathbf m}(\Phi^h)=Ch^{8/15}$ of the mass ${\mathbf m}(\Phi^h)$ follows from the scaling properties of the field $\Phi^h$. The fact that $C\in(0,\infty)$ is equivalent to $0<{\mathbf m}(\Phi^h)<\infty$, where the lower bound ${\mathbf m}(\Phi^h)>0$ is an immediate consequence of Theorem \ref{thm:exp-decay-field}. As explained in the proof of Corollary 1.6 of \cite{cjn20}, assuming that ${\mathbf m}(\Phi^h)=\infty$ leads to a contradiction, which proves that ${\mathbf m}(\Phi^h)<\infty$.
\end{remark}

We conclude this section and the paper with a sketch of the main ideas of the proof of the upper bound of Theorem \ref{thm:exp-decay}.
In the discussion below we assume that the reader has some familiarity with (FK) percolation and we use extensively the FK representation of the Ising model described in Section \ref{sec:Ising&FK}, and in particular the couplings discussed in Section \ref{sec:bounded}. In this context, the notation we use should be self-explanatory (e.g., we will use $\{x\longleftrightarrow y\centernot\longleftrightarrow g\}$ to denote the event that vertices $x$ and $y$ are in the same FK cluster and that that cluster is not connected to the ghost).

The first step of the proof of exponential decay consists in writing
\begin{eqnarray}
\langle\sigma_x;\sigma_y\rangle_{\beta_c,H} & = & P_{\beta_c,H}^{FK}(x\longleftrightarrow y)- P_{\beta_c,H}^{FK}(x\longleftrightarrow g)P_{\beta_c,H}^{FK}(y\longleftrightarrow g) \\
& = &  P_{\beta_c,H}^{FK}(x\longleftrightarrow y \centernot\longleftrightarrow g) + \text{Cov}_{\beta_c,H}^{FK}({\mathbf I}_{\{x\longleftrightarrow g\}}, {\mathbf I}_{\{y\longleftrightarrow g\}}), \label{eqdifference}
\end{eqnarray}
where $\text{Cov}_{\beta_c,H}^{FK}$ denotes covariance with respect to the FK measure $P_{\beta_c,H}^{FK}$ on $a\mathbb{Z}^2$.

Letting $B(x,L)$ denote the square centered at $x$ of side length $2L$ and writing
\begin{align*}
A^{near}_x:=\{&\text{there exists an FK-open path from } x, \text{ within }B(x,|x-y|/3), \text { to some } \\
&w \text{ with the edge from } w \text{ to } g \text{ open}  \}
\end{align*}
and $A^{far}_x:=\{x\longleftrightarrow g\}\setminus A^{near}_x$, so that $\{x\longleftrightarrow g\} = A^{near}_x \cup A^{far}_x$, the covariance in \eqref{eqdifference}
can be written as a sum of four covariances and $\langle\sigma_x;\sigma_y\rangle_{\beta_c,H}$ as a sum of five terms.
Bounding four of these five terms reduces to showing that, when $H=ha^{15/8}$,
\begin{equation} \label{eqstar}
P_{\beta_c,H}^{FK}(g \centernot\longleftrightarrow x \longleftrightarrow \partial B(x,|x-y|/3)) \leq \tilde C(h) a^{1/8} e^{-\hat C(h)|x-y|}.
\end{equation}
The remaining term, $\text{Cov}_{\beta_c,H}^{FK}(1_{A_x^{near}}, 1_{A_y^{near}})$, needs a separate argument and will be discussed later.

Focusing for now on \eqref{eqstar}, the power law part of the upper bound comes from a 1-arm argument, while the exponential part requires a more sophisticated argument that makes use of CME$_{16/3}$ coupled to CLE$_{16/3}$ (see Section \ref{sec:CLE&CME}) as well as a stochastic domination theorem by Liggett, Schonmann and Stacey \cite{LSS97}.

\begin{figure}
	\begin{center}
		\includegraphics[width=0.8\textwidth]{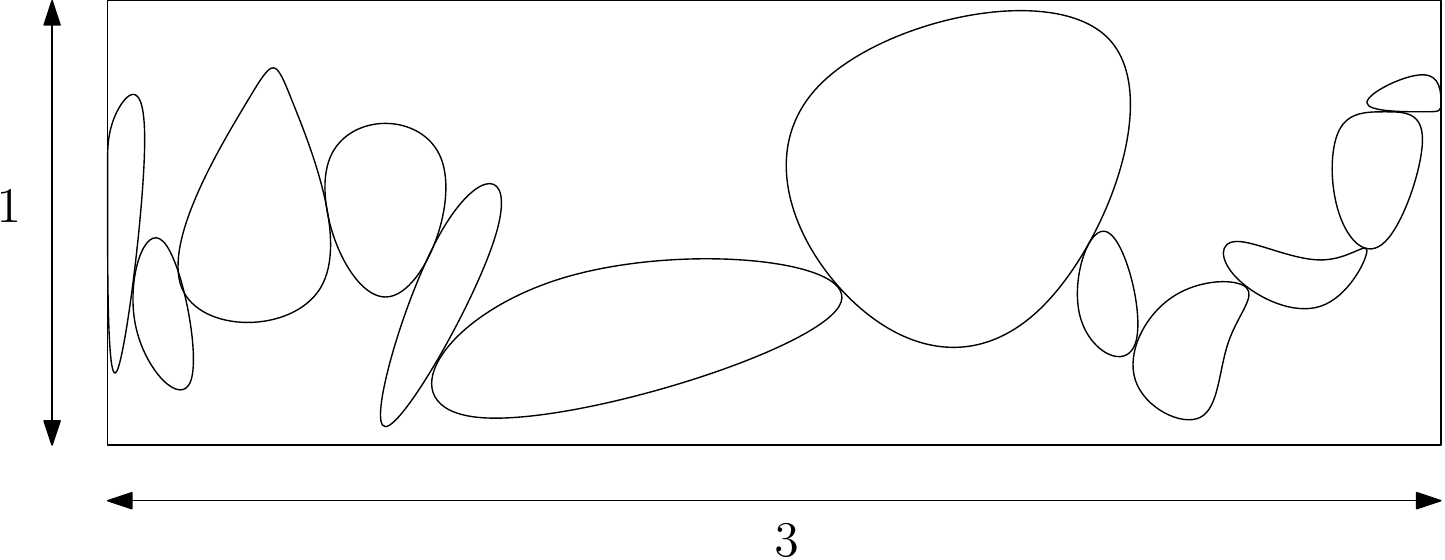}
		\caption{An illustration of the event described in the sketch of the proof of Theorem \ref{thm:exp-decay} representing a chain of large FK clusters crossing a rectangle.}\label{fig:E(K)}
	\end{center}
\end{figure}

Roughly speaking, what we use of the coupled CLE$_{16/3}$ and CME$_{16/3}$ is the fact that, for $K$ large, a realization inside the rectangle $\Lambda_{3N,N}:=[0,3N]\times[0,N]$ is likely to contain a chain of not more than $K$ touching loops that cross the rectangle in the long direction, with the first loop touching one of the short sides of the rectangle and the last loop touching the opposite side (see Fig.~\ref{fig:E(K)}). Moreover, the ``area'' of each of the continuum clusters associated to the loops in the chain is likely to be larger than $N^{15/8}/K$, with the probability of the event just described going to one as $K \to \infty$. Back on the lattice this implies that, inside $\Lambda_{3N,N}$, one can find with high probability a chain of FK clusters one lattice spacing away from each other and crossing the rectangle. Moreover, such clusters will, with high probability, have sizes larger than $N^{15/8}a^{-15/8}/K$, which in turn implies that there is a high probability that they are each connected to $g$ if the external field is $H=ha^{15/8}$ and $N$ is large.

Combining all of the above, with the help of the FKG inequality, one can show that, with high probability, a large annulus contains a circuit of FK clusters one lattice spacing away from each other, each connected to $g$, such that the circuit disconnects the inner square of the annulus from infinity. We call such an annulus \emph{good}.

In order to complete the proof of \eqref{eqstar}, one covers the plane with large overlapping annuli in such a way that their inner squares tile the plane. For each such annulus, the event that it is good happens with high probability. We would like to conclude that good annuli percolate, but the annuli are overlapping, so the events are not independent. To deal with this, one can use a stochastic domination result due to Liggett, Schonmann and Stacey \cite{LSS97}. Now, percolation of good annuli implies that the probability that $x$ is surrounded by a circuit of good annuli contained in a square $B(x,L)$ of size $2L$ centered at $x$ is close to one, exponentially in $L$. But because of planarity, if $x$ is surrounded by a circuit of good annuli contained in $B(x,L)$, the event $\{ g \centernot\longleftrightarrow x \longleftrightarrow \partial B(x,L) \}$ cannot happen. This provides the desired exponential bound.

The remaining term can be written as
\begin{eqnarray*}
	\text{Cov}_{\beta_c,H}^{FK}({\mathbf I}_{A_x^{near}}, {\mathbf I}_{A_y^{near}}) & = &
	P_{\beta_c,H}^{FK}(A^{near}_x \cap A^{near}_y) - P_{\beta_c,H}^{FK}(A^{near}_x)P_{\beta_c,H}^{FK}(A^{near}_y) \\
	& = & P_{\beta_c,H}^{FK}(A^{near}_y) \big[ P_{\beta_c,H}^{FK}(A^{near}_x | A^{near}_y) - P_{\beta_c,H}^{FK}(A^{near}_y) \big].
\end{eqnarray*}
A 1-arm argument provides a polynomial upper bound of order $a^{1/8}$ for $P_{\beta_c,H}^{FK}(A^{near}_y)$.
The first step in dealing with the remaining factor consists in showing that $P_{\beta_c,H}^{FK}(A^{near}_x | A^{near}_y)$ is smaller than the probability of the event $A^{near}_x$ with wired boundary condition on $B(x,2|x-y|/3)$. A key ingredient in proving this fact is the monotonicity of the Radon-Nikodym derivative of a suitable conditional FK measure in $B(x,2|x-y|/3)$ with respect to the FK measure in $B(x,2|x-y|/3)$ with wired boundary condition. The remaining step consists in showing that the probability of $A^c_x$ is not affected much by the boundary condition in $B(x,2|x-y|/3)$. This completes our sketch of the proof of the upper bound of Theorem \ref{thm:exp-decay}.

\bigskip

\noindent{\bf Acknowledgments.} The authors thank Wouter Kager for Figures \ref{phase_transition} and \ref{fig:loops}, and an anonymous referee for useful comments. The first author thanks Fran\c{c}ois Dunlop, Ellen Saada, and Alessandro Giuliani for organizing the IRS 2020 conference and for the invitation to give a presentation and to contribute to the conference proceedings. The research of the second author was partially supported by  NSFC grant 11901394 and that of the third author by US-NSF grant DMS-1507019.

\bigskip

\end{document}